\documentclass[11pt]{amsart}
\usepackage[english]{babel}
\usepackage{graphicx}
\usepackage{subcaption}
\usepackage[utf8]{inputenc}
\usepackage[margin=1in]{geometry}
\usepackage{amsthm,amssymb,amsmath}
\usepackage{amsfonts}
\usepackage{indentfirst}
\usepackage{mathtools}
\usepackage{hyperref}
\usepackage{amsrefs}

\DeclareMathOperator\supp{supp}
\DeclareMathOperator\PV{PV}
\DeclareMathOperator\diver{div}
\DeclareMathOperator\Co{Co}

\newtheorem{lemma}{Lemma}[section]

\newtheorem{proposition}{Proposition}[section]
\newtheorem{theorem}{Theorem}[section]
\newtheorem{corollary}{Corollary}[section]

\theoremstyle{definition}

\theoremstyle{definition}
\newtheorem{definition}{Definition}[section]

\theoremstyle{remark}
\newtheorem*{remark}{Remark}

\begin{document}
\title{Existence and Regularity for Vortex Patch Solutions of the 2D Euler Equations}
\author{R\u{a}zvan-Octavian Radu}
\address{Departemnt of Mathematics, Princeton University, Princeton, NJ 08544}
\email{rradu@princeton.edu}
\date{June 27, 2020}
\keywords{Euler equations, vortex patches, active scalars}
\subjclass[2010]{35Q31, 35Q35, 76B47}
\begin{abstract}
In \cite{BC}, Bertozzi and Constantin formulate the vortex patch problem in the level-set framework and prove a priori estimates for this active scalar equation. By extending the tools used to prove these estimates, we construct solutions and show propagation of higher H\"older regularity. This constitutes a proof of the regularity of vortex patches, carried out solely in the level-set framework. 
\end{abstract}
\maketitle

\section{Introduction}

A vortex patch is an example of a Yudovich (\cite{Y}) solution of the two-dimensional incompressible Euler equations. It is given by an initial vorticity which is constant in a simply connected, open and bounded domain $D_0 \subset \mathbb{R}^2$ (the patch) and zero outside of this domain. The resulting velocity transports the patch and preserves its topological properties as well as the step-function characteristic of the vorticity. Given a patch with smooth initial boundary, it is natural to ask whether the boundary remains smooth for all time (\cite{M}). The problem was settled in the affirmative by Chemin in \cite{Ch}. More precisely, using methods of paradifferential calculus, it is shown in \cite{Ch} that if the boundary of the patch is $C^{k, \mu}$ at the initial time, then it remains so for all time. Soon afterwards, a different proof of this result was given by Bertozzi-Constantin in \cite{BC}. Their approach builds on a local existence result for the contour dynamics equation of the boundary (\cite{B}, see also chapter 8 of \cite{MB}) by proving a priori estimates in the level-set framework. Later, yet another approach was presented by Serfati in \cite{S} (see also \cite{BK}) . Recently, the regularity of vortex patches was also studied in Sobolev spaces in \cite{CS}.

Following \cite{BC}, we formulate the vortex patch problem in the level-set framework. More precisely, we consider the following active scalar equation:

Let $\phi: \mathbb{R}^2 \times [-T, T] \rightarrow \mathbb{R}$ be a solution of equations 
\begin{equation} \label{eq1.1}
\partial_t \phi + v \cdot \nabla \phi = 0, 
\end{equation}
\begin{equation} \label{eq1.2}
\phi(x,0) = \phi_0 (x),
\end{equation}
\begin{equation} \label{eq1.3}
v(x,t) = \omega_0 \int_{\mathbb{R}^2} K(x - y) H(\phi(y,t)) dy,
\end{equation}
where $\omega_0 \in \mathbb{R}$ is nonzero, the kernel is given by $K(x) = \frac{1}{2\pi} |x|^{-2} \begin{bmatrix}
-x_2 \\
x_1
\end{bmatrix},$ and $H$ is the Heaviside function: $H(\phi) = 1$ if $\phi \geq 0$ and $H(\phi) = 0$ otherwise.  The patch is defined by $D = D(t)$, 
\begin{equation*}
D = \{ x \in \mathbb{R}^2 \mid \phi(x,t) > 0 \}.
\end{equation*}
We assume that $D(0) = D_0 =  \{ x \in \mathbb{R}^2 \mid \phi_0(x) > 0 \}$ is bounded and simply connected, that  $\phi_0 \in C^{k, \,\mu} (\mathbb{R}^2)$, $\mu \in (0,1)$ and $k \in \mathbb{Z}_+$, and that 
\begin{equation} \label{eq1.4}
\inf_{x \in \partial D_0} |\nabla \phi_0(x)| \geq m > 0,
\end{equation}
where $C^{k, \mu}$ is the Banach space of $k$-times continuously differentiable functions with bounded, $\mu$-H\"older continuous partial derivatives and endowed with the standard norm, and $\partial D_0$ denotes the boundary of the domain $D_0$. Condition \eqref{eq1.4}, in view of the inverse mapping theorem, assures that the boundary of the initial patch is $C^{k, \, \mu}$. We will also use the following notation: 
\begin{equation*}
|\nabla \phi|_{\inf} = \inf \{|\nabla \phi (x)| \mid x \in \phi^{-1}(\{0\})\} ,
\end{equation*}
and for the H\"older semi-norms: 
\begin{equation*}
|\nabla \phi|_\mu = \sup_{x \neq y} \frac{|\nabla \phi (x) - \nabla \phi(y)|}{|x - y|^\mu}.
\end{equation*}

For the sake of exposition, let us introduce the following definitions. 

\begin{definition}
We say $\phi_0:\mathbb{R}^2 \rightarrow \mathbb{R}$ is a $C^{k, \, \mu}$ initial scalar ($0 < \mu < 1$, $k \in \mathbb{Z}_+$) if $\phi_0 \in C^{k, \, \mu}(\mathbb{R}^2)$, $|\nabla \phi_0|_{\inf} \geq m > 0$, and $D_0$ is simply connected and bounded. 
\end{definition}

\begin{definition}
We say that $\phi:\mathbb{R}^2 \times \mathbb{R} \rightarrow \mathbb{R}$ is a solution to the vortex patch equations with $C^{k, \, \mu}$ initial scalar if it solves equations \eqref{eq1.1}-\eqref{eq1.3} and $\phi_0$ is a $C^{k, \, \mu}$ initial scalar. 
\end{definition}

The results of this paper are best regarded in contrast with the proof of Bertozzi-Constantin. In \cite{B} (see also section 8.3 of \cite{MB}), using the Picard-Lindel\"of theorem on the Banach spaces $C^{k, \mu}(\mathbb{S}^1)$, $k \geq 1$, $0 < \mu < 1$, it is proven that local existence and uniqueness of solutions hold in these spaces for the contour dynamics equation (\cite{ZHR})
\begin{equation*}
\frac{d}{dt}z(\alpha, t) = - \frac{\omega_0}{2 \pi} \int_0^{2\pi} \ln|z(\alpha, t) - z(\alpha', t)| \partial_\alpha z(\alpha', t)  d\alpha',
\end{equation*}
which describes the motion of the boundary $\partial D(t) = z(\cdot, t) \in C^{k ,\mu}(\mathbb{S}^1)$ of the patch. Moreover, in this framework still, higher H\"older norms are shown to be controlled by lower ones, which means that it is enough to show a priori estimates for the $C^{1, \mu}$ norm in order to conclude that higher H\"older regularity for the patch is preserved for all time. And indeed, these estimates are obtained in \cite{BC}, by considering the level-set framework of equations \eqref{eq1.1}-\eqref{eq1.3}. 

In this paper, we make use of the ideas involved in the a priori estimates of \cite{BC} to show that if $\phi_0$ is a $C^{1, \, \mu}$ initial scalar (in the sense of the definition above), then there exists a global solution of equations \eqref{eq1.1}-\eqref{eq1.3} which is $C^{1, \, \mu}$ for all time. Then, we show that if $\phi_0$ is, moreover, a $C^{k, \, \mu}$ initial scalar, the boundary of the patch remains $C^{k, \, \mu}$ for all time. Both the construction of $C^{1, \mu}$ solutions and the propagation of the higher H\"older regularity are carried out in the level-set framework, and, thus, represent a new proof of the regularity of vortex patches. 

More precisely, the main theorems of this paper are the following:

\begin{theorem} \label{thm1}
There exists a continuously differentiable solution $\phi: \mathbb{R}^2 \times \mathbb{R} \rightarrow \mathbb{R}$ of the vortex patch equations with $C^{1, \, \mu}$ initial scalar $\phi_0$ such that $\phi(\cdot, t) \in C^{1, \, \mu} (\mathbb{R}^2)$ and $|\nabla \phi(\cdot, t)|_{\inf}>0$ for all times $t \in \mathbb{R}$. 
\end{theorem}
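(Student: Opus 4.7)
The plan is to construct $\phi$ as a locally uniform limit of smooth approximations obtained by regularizing the singular Biot--Savart kernel. Fix a standard mollifier $\rho_\epsilon$ and replace $K$ by $K_\epsilon = K * \rho_\epsilon$, so that the approximating system
\begin{equation*}
\partial_t \phi^\epsilon + v^\epsilon \cdot \nabla \phi^\epsilon = 0, \qquad v^\epsilon(x,t) = \omega_0 \int_{\mathbb{R}^2} K_\epsilon(x-y)\, H(\phi^\epsilon(y,t))\,dy,
\end{equation*}
has a bounded, smooth, Lipschitz velocity field driven by a bounded, compactly supported source. A straightforward fixed-point/flow-map argument then produces a unique global solution $\phi^\epsilon$ which preserves the $C^{1,\mu}$ regularity of $\phi_0$ by transport along the smooth flow.

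Next I would transfer the a priori estimates of Bertozzi--Constantin onto the family $\{\phi^\epsilon\}$. Their key bound has the form
\begin{equation*}
\|\nabla v^\epsilon\|_{L^\infty} \le C\bigl(1 + \|H(\phi^\epsilon)\|_{L^1 \cap L^\infty}\bigr) + C\,\frac{|\nabla \phi^\epsilon|_\mu}{|\nabla \phi^\epsilon|_{\inf}}\,\log\!\Bigl(e + \frac{|\nabla \phi^\epsilon|_\mu}{|\nabla \phi^\epsilon|_{\inf}}\Bigr),
\end{equation*}
and none of its proof is affected by replacing $K$ with $K_\epsilon$, since mollification only improves the relevant singular-integral bounds. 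Coupling this to the linear ODE satisfied by $\nabla \phi^\epsilon$ along the flow and running a Gronwall/Osgood argument yields, on every finite interval $[-T,T]$, uniform (in $\epsilon$) bounds on $|\nabla \phi^\epsilon(\cdot,t)|_\mu$ and on $1/|\nabla \phi^\epsilon(\cdot,t)|_{\inf}$. Because $\|v^\epsilon\|_{L^\infty}$ is uniformly controlled, the support of $H(\phi^\epsilon(\cdot,t))$ also remains in a fixed ball.

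These uniform estimates put Arzel\`a--Ascoli in play: a subsequence $\phi^{\epsilon_j}$ converges in $C^1_{\mathrm{loc}}(\mathbb{R}^2\times\mathbb{R})$ to a limit $\phi$, which inherits the $C^{1,\mu}$ bound and the strict positivity of $|\nabla\phi(\cdot,t)|_{\inf}$ by lower semicontinuity. Passing to the limit in the transport equation \eqref{eq1.1} is then routine once we know $v^{\epsilon_j}\to v$ locally uniformly, where $v$ is the right-hand side of \eqref{eq1.3}.

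The main obstacle is precisely this last convergence, because $H$ is discontinuous and $K$ is singular, so one cannot simply pass to the limit under the integral. The resolution uses the propagated lower bound $|\nabla\phi(\cdot,t)|_{\inf}>0$: by the implicit function theorem the zero set $\{\phi(\cdot,t)=0\}$ is a $C^1$ curve, hence of Lebesgue measure zero, so the locally uniform convergence $\phi^{\epsilon_j}\to\phi$ forces $H(\phi^{\epsilon_j})\to H(\phi)$ almost everywhere. Combining this with the uniform support bound and the splitting of the integral into a piece on $\{|x-y|>\delta\}$ (where $K_{\epsilon_j}\to K$ uniformly) and a piece on $\{|x-y|\le\delta\}$ (where $K\in L^1_{\mathrm{loc}}$ makes the tail small uniformly in $\epsilon_j$), dominated convergence delivers $v^{\epsilon_j}(x,t)\to v(x,t)$. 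A small additional check — that $|\nabla\phi|_{\inf}>0$ is preserved in the limit, not merely $\ge 0$ — is handled by the same Gronwall estimate, which supplies an $\epsilon$-independent positive lower bound that survives the passage to the limit.
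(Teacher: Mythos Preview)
Your overall strategy is sound and parallel to the paper's, but you regularize a different object: the paper mollifies the Heaviside function $H$, replacing it by smooth $H_n$, whereas you mollify the Biot--Savart kernel $K$. This choice has real consequences for the existence step. In the paper's scheme the approximate vorticity $\omega_0 H_n(\phi_0)$ is $C^{1,\mu}$ and compactly supported, so the approximate system is a genuine smooth 2D Euler equation and global existence comes directly from classical theory. In your scheme the vorticity $\omega_0 H(\phi^\epsilon)$ is still a characteristic function, so the coupling $\phi^\epsilon \mapsto H(\phi^\epsilon) \mapsto v^\epsilon$ is discontinuous in any $C^k$ topology; the ``straightforward fixed-point/flow-map argument'' you invoke is not immediate and needs the observation that small $C^0$ perturbations of $\phi^\epsilon$ (with a uniform lower bound on $|\nabla\phi^\epsilon|_{\inf}$) produce small $L^1$ perturbations of $H(\phi^\epsilon)$, combined with $|K_\epsilon|_{L^\infty}<\infty$. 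This is doable, but it is precisely the difficulty the paper's choice of regularization sidesteps. Conversely, your scheme has the advantage that the vorticity remains a single characteristic function, so the Bertozzi--Constantin geometric lemma applies directly; the paper, having smeared the jump, must approximate $H_n$ by sums of step functions and replace $|\nabla\phi|_{\inf}$ with a thickened version $|\nabla\phi|_{M-\inf}$ to push the estimate through.

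There is also a genuine error in the key estimate you display: you write
\[
\|\nabla v^\epsilon\|_{L^\infty}\le C(\cdots)+C\,\frac{|\nabla\phi^\epsilon|_\mu}{|\nabla\phi^\epsilon|_{\inf}}\,\log\Bigl(e+\tfrac{|\nabla\phi^\epsilon|_\mu}{|\nabla\phi^\epsilon|_{\inf}}\Bigr),
\]
with the ratio appearing as a \emph{multiplicative} prefactor. The actual Bertozzi--Constantin bound is purely logarithmic in $L/\delta$ with $\delta=(|\nabla\phi|_{\inf}/|\nabla\phi|_\mu)^{1/\mu}$, i.e.\ of the form $C|\omega_0|\bigl(1+\log(1+L/\delta)\bigr)$. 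Your stated form would feed a linear, not logarithmic, term into the Gr\"onwall/Osgood loop and the argument would not close to a global-in-time bound. Finally, your assertion that ``mollification only improves the relevant singular-integral bounds'' is morally right for the $L^\infty$ bound on $\nabla v^\epsilon$ (since $\nabla v^\epsilon=(\nabla v)\ast\rho_\epsilon$), but the H\"older control on $\nabla v^\epsilon\, W$ needed for Proposition~\ref{prop3.4} relies on the commutator identity of Proposition~\ref{prop3.3}; you should check that this identity, and the accompanying Lemma~\ref{lemma3.3}, hold for $K_\epsilon$ with constants uniform in $\epsilon$.
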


\begin{theorem} \label{thm2}
Let $\phi_0 \in C^{k, \, \mu}(\mathbb{R}^2)$, $k \geq 1$, $0 < \mu <1$, be such that $D_0$ is bounded and simply connected, and $|\nabla 
\phi_0|_{\inf} \geq m > 0$. Let $\omega_0 \in \mathbb{R}$, and define the initial vorticity $\omega^0(x) = \omega_0 H(\phi_0(x))$. Then, the unique Yudovich solution $\omega(x,t)$ is the characterisitc function of a bounded and simply connected domain with $C^{k, \, \mu}$ boundary for all time.  
\end{theorem}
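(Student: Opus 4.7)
The plan is to combine Theorem \ref{thm1} with Yudovich's uniqueness theorem to identify the Yudovich solution, and then upgrade the boundary's regularity from $C^{1,\mu}$ to $C^{k,\mu}$ by extending the a priori estimates of Bertozzi--Constantin inductively in $k$.

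The first step is essentially a consequence of Theorem \ref{thm1}. Apply it to obtain a global $C^{1,\mu}$ level-set solution $\phi$ with $|\nabla \phi(\cdot, t)|_{\inf}>0$, and set $\omega(x,t) = \omega_0 H(\phi(x,t))$. Since $\nabla \phi$ does not vanish on the zero set, the inverse function theorem shows that $D(t) := \{ x : \phi(x,t)>0 \}$ has $C^{1,\mu}$ boundary, and $\omega(\cdot, t) = \omega_0 \mathbf{1}_{D(t)}$ almost everywhere. Because $\omega$ is bounded and compactly supported, the Biot--Savart velocity $v$ is log-Lipschitz, so its flow $X_t$ is a volume-preserving homeomorphism of $\mathbb{R}^2$; therefore $D(t) = X_t(D_0)$ remains bounded and simply connected. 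A direct computation then shows that $\omega$ satisfies the vorticity equation in the sense of distributions, and Yudovich's uniqueness theorem identifies $\omega$ as the unique Yudovich solution with initial vorticity $\omega^0$.

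To upgrade the regularity, I would show by induction on $k \geq 1$ that $\phi_0 \in C^{k,\mu}$ implies $\phi(\cdot, t) \in C^{k,\mu}$ with $|\nabla \phi(\cdot, t)|_{\inf}$ bounded below on any compact time interval; the statement on $\partial D(t)$ then follows via the inverse function theorem. The base case is Theorem \ref{thm1}. For the inductive step, differentiating \eqref{eq1.1} yields, for any multi-index $\alpha$ with $|\alpha|=k$,
\begin{equation*}
\partial_t \partial^\alpha \phi + v \cdot \nabla \partial^\alpha \phi = -\sum_{0 < \beta \leq \alpha} \binom{\alpha}{\beta} \partial^\beta v \cdot \nabla \partial^{\alpha - \beta} \phi.
\end{equation*}
Composing with the flow $X_t$, taking $C^\mu$ seminorms and applying Gr\"onwall reduces the estimate for $\|\phi(\cdot, t)\|_{C^{k,\mu}}$ to controlling the top-order term $\|\nabla^k v(\cdot, t)\|_{C^\mu}$ linearly by $\|\phi(\cdot, t)\|_{C^{k,\mu}}$ plus lower-order quantities already handled by the inductive hypothesis. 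The lower bound on $|\nabla \phi|_{\inf}$ is preserved along the flow by the same ODE argument used for $k=1$.

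The main obstacle, and the technical heart of the argument, is this higher-order singular integral estimate. For $k=1$, Bertozzi--Constantin rewrite $\nabla v$ as a principal-value operator acting on the tangential field $W := \nabla^\perp \phi / |\nabla \phi|$, obtained by integrating by parts in \eqref{eq1.3} to transfer the derivative off the Heaviside onto $\phi$ along its level sets. I would iterate this device: every extra derivative that would otherwise hit $H(\phi)$ is converted, via a tangential integration by parts, into a level-set derivative of a polynomial expression in the components of $W$ and their derivatives. The resulting operator is a principal value against a $-2$-homogeneous Calder\'on--Zygmund kernel with cancellation, whose boundedness on $C^\mu$ yields the required estimate. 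Tracking the commutators and error terms produced in this process is the main technical labor; once this is completed the induction closes and Theorem \ref{thm2} follows.
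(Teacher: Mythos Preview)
Your first step---identifying the Yudovich solution via Theorem \ref{thm1}---is correct and matches the paper. The genuine gap is in the inductive scheme for higher regularity.

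You propose to propagate $\phi(\cdot,t)\in C^{k,\mu}(\mathbb{R}^2)$ and to close the loop by controlling $\|\nabla^k v(\cdot,t)\|_{C^\mu}$. For the limiting patch this fails already at $k=1$: since $\omega=\omega_0\chi_D$ is a step function, the local term $\tfrac{\omega}{2}\bigl[\begin{smallmatrix}0&-1\\1&0\end{smallmatrix}\bigr]$ in \eqref{eq3.4} jumps across $\partial D$, so $\nabla v\notin C^{0,\mu}(\mathbb{R}^2)$ and $\nabla^k v$ is not even a bounded function for $k\ge 2$. Correspondingly the flow is only globally $C^1$, and $\phi(\cdot,t)=\phi_0\circ X_t^{-1}$ is not expected to stay in $C^{k,\mu}(\mathbb{R}^2)$ for $k\ge 2$; what persists is the $C^{k,\mu}$ regularity of the \emph{boundary curve}, not of the level-set function. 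Retreating to the smooth approximations $\phi^n$ does not rescue the scheme either: $\nabla\omega^n=\omega_0 H_n'(\phi^n)\nabla\phi^n$ with $|H_n'|_{L^\infty}\sim n$, so $\|\nabla^2 v_n\|_{L^\infty}$ and the full $C^{k,\mu}$ norms of $\phi^n$ blow up as $n\to\infty$. Your transport equation for $\partial^\alpha\phi$ therefore has a right-hand side that is not uniformly bounded, and the Gr\"onwall argument cannot close.

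The paper avoids this by never taking a full spatial derivative. The only derivatives that are uniformly controlled are the \emph{tangential} iterates $(W_n\cdot\nabla)^j$, precisely because $W_n\cdot\nabla\omega^n=0$ kills the dangerous term when one integrates by parts. Proposition \ref{prop6.1} shows that $(W_n\cdot\nabla)^{j}v_n$ is a sum of commutator integrals of the form \eqref{eq6.3}, to which Lemma \ref{lemma6.1} (and Lemma \ref{lemma3.3} for the top-order piece) applies; Proposition \ref{prop6.2} then runs a Gr\"onwall loop on $|(W_n\cdot\nabla)^{k-1}W_n|_{C^{0,\mu}}$, uniformly in $n$. Via Proposition \ref{prop5.1} and Corollary \ref{cor5.1}, these tangential quantities evaluated on $\partial D_n$ control $|z_n(\cdot,t)|_{C^{k,\mu}(\mathbb{S}^1)}$, and Arzel\`a--Ascoli passes the bound to the limiting boundary $z$. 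Your intuition about ``tangential integration by parts'' is exactly the right mechanism, but it must be implemented from the outset as an induction on $(W\cdot\nabla)^{j}W$ and on the boundary parametrization, not on $\partial^\alpha\phi$ and $\nabla^k v$.
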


We now briefly describe the strategy. To obtain a $C^{1, \, \mu}$ solution of the vortex patch equations \eqref{eq1.1}-\eqref{eq1.3}, we first mollify the Heaviside function and obtain a sequence of smooth approximations $H_n$. For each $n \in \mathbb{Z}_+$, we define the initial vorticities $ \omega_0^n (x) = \omega_0 H_n(\phi_0(x))$, which are $C^{1, \, \mu}$ and compactly supported. It is well-known that the 2D vorticity equation has a global solution for such initial vorticities (see, for example, chapter 4 of \cite{MB}). Using the particle-trajectory mappings $X_n$, we find solutions $\phi^n$ to equations which approximate \eqref{eq1.1}-\eqref{eq1.3}. Following and adapting the methods of \cite{BC}, we obtain uniform bounds for the $L^\infty$ norms of the gradients of the velocities, $\nabla v_n$, and for the H\"older norms of the gradients of the solutions, $\nabla \phi^n$. Then, using arguments similar to those presented in \cite{MB} for the existence of Yudovich weak solutions, we find a subsequence of particle trajectories which converges uniformly on compact sets. Moreover, using the estimates we have obtained, we can apply the Arzel\`a-Ascoli theorem to find a further subsequence of $\nabla \phi^n$ which also converges uniformly on compact sets. Finally, we check that the obtained quantities solve the equations \eqref{eq1.1}-\eqref{eq1.3}. Since the methods are essentially based on the a priori estimates of \cite{BC}, $C^{1, \, \mu}$ regularity of these solutions is automatic, and so theorem \ref{thm1} is proven. 

For the higher regularity case, we consider parametrizations of the boundaries of the approximate patches $z_n:S^1 \times \mathbb{R} \rightarrow \mathbb{R}^2$. Using the facts that the tangent vectors are at each point perpendicular to the gradients $\nabla \phi^n$ and that the boundaries are transported by the particle trajectories $X_n$, we are able to obtain expressions for the derivatives of $z_n$, which lend themselves to an inductive argument. This yields uniform bounds for the H\"older norms in terms of certain quantities which are natural extensions of those which lie at the heart of the arguments in \cite{BC}. Then, we show that a subsequence converges to the boundary of the patch given by theorem \ref{thm1}, thus implying that the regularity of the patch persists and proving theorem \ref{thm2}. 

\section{Existence and Regularity of Solutions of the Approximate Equations}

In this section, we prove that, given a $C^{k, \, \mu}$ initial scalar $\phi_0$, there exist $C^{k, \, \mu}$ solutions to a set of equations which approximate equations \eqref{eq1.1}-\eqref{eq1.3}.

Let $\rho: \mathbb{R} \rightarrow \mathbb{R}$ be a standard mollifier, supported in $[-1,1]$, and with $1 \geq \rho(x) \geq 0$, $\int \rho = 1$. Define, for $n \in \mathbb{Z}_+$, $\rho_n(x) = n \rho (nx)$. Let $H_n (x) = \rho_n * H (x - \frac{1}{n})$. $H_n$ is smooth by the properties of mollifiers, and it is easy to see that $0 \leq H_n(x) \leq 1$, $H_n(x) = 1$ for $x \geq 2/n$ and $H_n(x) = 0$ for all $x \leq 0$. 

Consider the following equations which approximate \eqref{eq1.1}-\eqref{eq1.3}: 
\begin{equation} \label{eq2.1}
\partial_t \phi^n + v_n \cdot \nabla \phi^n = 0,
\end{equation}
\begin{equation} \label{eq2.2}
\phi^n(x,0) = \phi_0(x),
\end{equation}
\begin{equation} \label{eq2.3}
v_n(x,t) = \omega_0 \int_{\mathbb{R}^2} K(x - y) H_n(\phi^n(y,t)) dy.
\end{equation}

\begin{definition}
We say that $\phi^n:\mathbb{R}^2 \times \mathbb{R} \rightarrow \mathbb{R}$ is a solution to the $n$-approximate vortex patch equations with $C^{k, \, \mu}$ initial scalar ($n \in \mathbb{Z}_+$) if it solves equations \eqref{eq2.1}-\eqref{eq2.3} and $\phi_0$ is a $C^{k, \, \mu}$ initial scalar. 
\end{definition}

Define the initial vorticity
\begin{equation} \label{eq2.4}
\omega_0^n(x) = \omega_0 H_n(\phi_0(x)).
\end{equation}
Since $H_n$ is smooth and $\phi_0$ is $C^{k, \, \mu}$, $\omega_0^n \in C^{k, \, \mu}(\mathbb{R}^2)$. Moreover, $\omega_0^n$ is supported in $\overline{D_0}$ (the closure of $D_0$), so it is compactly supported. Therefore, there exists for all time a unique solution to the 2D vorticity equations (see chapter 4 of \cite{MB}):
\begin{equation} \label{eq2.5}
\partial_t \omega^n + v_n \cdot \nabla \omega^n = 0
\end{equation}
\begin{equation} \label{eq2.6}
\omega^n(x,0) = \omega_0^n.
\end{equation}
\begin{equation} \label{eq2.7}
v_n(x,t) = \int_{\mathbb{R}^2} K(x - y) \omega^n(y,t) dy. 
\end{equation}
The velocity field $v_n$ defines the particle-trajectory mappings $X_n: \mathbb{R}^2 \times \mathbb{R} \rightarrow \mathbb{R}^2$ via the equation 
\begin{equation} \label{eq2.8}
\frac{d}{dt} X_n (\alpha, t) = v_n (X_n(\alpha,t), t), \, \, \, \, \, X_n(\alpha, 0) = \alpha.
\end{equation}
For each time $t$, $X_n(\cdot, t)$ is a volume-preserving $C^{k+1}$ diffeomorphism of the plane onto itself, with inverse $X_n^{-1}(\cdot, t)$. We will refer to $X_n^{-1}(x,t)$ as the back-to-labels mappings. In two dimensions, the vorticity is transported by the particle-trajectories. That is,
\begin{equation} \label{eq2.9}
\omega^n(x,t) = \omega_0^n (X_n^{-1} (x,t)) = \omega_0 H_n \circ \phi_0(X_n^{-1}(x,t)). 
\end{equation}

Define $\phi^n (x, t) = \phi_0 (X_n^{-1}(x,t))$. We show that this is a solution to the $n$-approximate equations. 

\begin{proposition} \label{prop2.1}
Assume $\phi_0$ is a $C^{k, \, \mu}$ initial scalar. Let $\omega_0^n$ be an initial vorticity defined as in \eqref{eq2.4}, and let $X_n^{-1}(x,t)$ be the back-to-labels mappings corresponding to the solution $(\omega^n, v_n)$ of the vorticity equations \eqref{eq2.5}-\eqref{eq2.7}. Then $\phi^n(x,t) = \phi_0(X_n^{-1}(x,t))$ is a solution to the $n$-approximate equations with $C^{k, \, \mu}$ initial scalar. Moreover, $\phi^n(\cdot, t) \in C^{k, \, \mu}(\mathbb{R}^2)$ for all times $t \in \mathbb{R}$  and $\nabla v_n \in L^1_{loc}(\mathbb{R}; L^\infty(\mathbb{R}^2))$.
\end{proposition}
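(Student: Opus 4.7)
The plan is to use the identity $\phi^n(X_n(\alpha,t),t)=\phi_0(\alpha)$, equivalent to the definition $\phi^n = \phi_0 \circ X_n^{-1}$, as the pivot for verifying the three defining equations \eqref{eq2.1}--\eqref{eq2.3}. First I would differentiate this identity in $t$ and apply the chain rule, using \eqref{eq2.8}, to get $\partial_t \phi^n(X_n(\alpha,t),t) + v_n(X_n(\alpha,t),t)\cdot \nabla \phi^n(X_n(\alpha,t),t) = 0$; since $X_n(\cdot,t)$ is a diffeomorphism of the plane, this yields \eqref{eq2.1} pointwise. The initial condition \eqref{eq2.2} is immediate from $X_n^{-1}(x,0)=x$. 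For \eqref{eq2.3}, I would combine \eqref{eq2.9} with \eqref{eq2.7}: since $\omega^n(y,t)=\omega_0 H_n(\phi_0(X_n^{-1}(y,t)))=\omega_0 H_n(\phi^n(y,t))$, the Biot--Savart representation of $v_n$ in terms of $\omega^n$ becomes precisely the representation in terms of $H_n \circ \phi^n$.

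For the regularity of $\phi^n(\cdot,t)$, the cleanest route is via the back-to-labels map: since $\omega_0^n \in C^{k,\mu}(\mathbb{R}^2)$ is compactly supported, the standard 2D theory for smooth enough vorticity (chapter 4 of \cite{MB}) gives $v_n \in L^\infty_{loc}(\mathbb{R}; C^{k+1,\mu}(\mathbb{R}^2))$, and hence $X_n(\cdot,t)$ together with its inverse $X_n^{-1}(\cdot,t)$ is a volume-preserving $C^{k+1}$ diffeomorphism with derivatives controlled locally in time. The chain rule (or Fa\`a di Bruno for $k\geq 2$) applied to $\phi^n = \phi_0 \circ X_n^{-1}$ then yields $\phi^n(\cdot,t) \in C^{k,\mu}(\mathbb{R}^2)$, since $C^{k,\mu}$ is stable under composition with $C^{k+1}$ diffeomorphisms.

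It remains to establish $\nabla v_n \in L^1_{loc}(\mathbb{R}; L^\infty(\mathbb{R}^2))$. The vorticity $\omega^n(\cdot,t)$ is compactly supported (in $X_n(\overline{D_0},t)$, which remains bounded on any compact time interval because $v_n$ is bounded in $L^\infty$ by the conservation of $\|\omega^n\|_{L^1\cap L^\infty}$) and has fixed $L^\infty$ norm $|\omega_0|$; it is also $\mu$-H\"older continuous for each $t$. For such vorticities the classical singular-integral estimate yields $\nabla v_n(\cdot,t) \in L^\infty(\mathbb{R}^2)$ with a bound depending only on $\|\omega^n(\cdot,t)\|_{L^1\cap L^\infty}$, the diameter of the support, and the H\"older seminorm of $\omega^n(\cdot,t)$. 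All of these quantities are continuous in $t$ (and in fact locally bounded), so the required $L^1_{loc}$-in-time bound follows; in fact one obtains the stronger $L^\infty_{loc}$-in-time bound.

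The main obstacle in all of this is purely bookkeeping: making sure that one does not implicitly use estimates that blow up as $n\to\infty$, which is fine here because Proposition \ref{prop2.1} is stated only for fixed $n$ and gives no uniform control. The key qualitative input is the smoothness of $H_n$, which makes $\omega_0^n$ belong to the same H\"older class as $\phi_0$ and places the problem in the classical smooth-vorticity regime, so no refined analysis (of the kind needed later for the uniform-in-$n$ estimates) is required at this stage.
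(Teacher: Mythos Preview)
Your proposal is correct and follows essentially the same route as the paper: both differentiate the identity $\phi^n(X_n(\alpha,t),t)=\phi_0(\alpha)$ in time to obtain \eqref{eq2.1}, read off \eqref{eq2.2} from $X_n(\cdot,0)=\mathrm{id}$, deduce \eqref{eq2.3} from \eqref{eq2.9} and \eqref{eq2.7}, and appeal to chapter~4 of \cite{MB} for the $C^{k,\mu}$ regularity of $v_n$ and $X_n^{\pm 1}$ before transferring this to $\phi^n$ via the chain rule. The only cosmetic difference is that the paper writes out $\nabla X_n^{-1}$ explicitly as the cofactor matrix of $\nabla X_n$ (using $\det\nabla X_n=1$) rather than invoking Fa\`a di Bruno, and it absorbs the $\nabla v_n \in L^1_{loc}(\mathbb{R};L^\infty)$ claim into the same citation rather than spelling out the singular-integral bound as you do.
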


\begin{proof}
Changing to Lagrangian coordinates, we see that $\phi^n(X_n(\alpha, t), t) = \phi_0 (\alpha)$. Differentiating in time and using \eqref{eq2.8}, we obtain: 
\begin{eqnarray*}
	0 & = & \frac{d}{dt} X_n(\alpha, t) \cdot \nabla \phi^n(X_n(\alpha,t), 	t) + \partial_t \phi^n(X_n(\alpha, t), t) \\
	   & = & v_n(X_n(\alpha, t), t) \cdot \nabla  \phi^n(X_n(\alpha,t), t) + \partial_t \phi^n(X_n(\alpha, t), t). 
\end{eqnarray*}
Since $X_n(\cdot, t)$ is a bijection, this shows that \eqref{eq2.1} is satisfied, provided that $v_n$ satisfies \eqref{eq2.3}. The initial condition in \eqref{eq2.8} shows \eqref{eq2.2} is satisfied. To see that the velocity $v_n$ satisfies \eqref{eq2.3}, note that \eqref{eq2.9} together with the definition of $\phi^n$ yield $\omega^n(x,t) = \omega_0 H_n (\phi^n(x,t))$, and the result follows by plugging this into \eqref{eq2.7}. 

It remains to show that $\phi^n(\cdot, t) \in C^{k, \, \mu}$ and that $\nabla v_n \in L^1_{loc}(\mathbb{R}; L^\infty(\mathbb{R}^2))$. It is well known that if $\omega_0^n \in C^{k, \, \mu}$ is compactly supported, then the velocity $v_n$ satisfies this property and $\nabla X_n(\cdot, t) \in C^{k, \, \mu}$ (see chapter 4 in \cite{MB}). Furthermore, by writing $X_n^{-1}(X_n(\alpha, t), t) = \alpha$ we find 
\begin{equation*}
\nabla_x X_n^{-1}(x, t) = \frac{\Co[\nabla_\alpha X_n(\alpha, t)]}{\det \nabla_\alpha X_n(\alpha, t)} = \Co[\nabla_\alpha X_n(\alpha, t)],
\end{equation*}
where $\Co[\nabla_\alpha X_n]$ is the cofactor matrix of $\nabla_\alpha X_n$. Therefore, we also have $\nabla X_n^{-1}(\cdot ,t) \in C^{k, \, \mu}$.  By construction, $| \phi^n(\cdot, t)|_{L^\infty} = |\phi_0|_{L^\infty}$. Also, 
\begin{equation*}
\nabla \phi^n(x, t) = (\nabla X_n^{-1}(x,t))^T \nabla \phi_0(X_n^{-1}(x,t)).
\end{equation*} 
This last expression readily implies $\nabla \phi^n(\cdot, t) \in C^{k-1, \, \mu}(\mathbb{R}^2)$ and concludes the proof. 
\end{proof}

In the following proposition, we note that the velocities of these solutions are bounded and that the area of the $n$-approximate patch is constant. 

Let $D_n = D_n(t) = \{x \in \mathbb{R}^2 \mid \phi^n(x,t) > 0\}$ (clearly, $D_n(0) = D_0$) and define $\pi L^2 =  m(D_0)$, where $m(\cdot)$ is the Lebesgue measure on $\mathbb{R}^2$. We will denote the ball of radius $L$ around the origin by $B_L(0)$.

\begin{proposition} \label{prop2.2}
Let $\phi^n$ be the solution given by proposition \ref{prop2.1}, and let $v_n$ be the corresponding velocity. Then, for any time $t \in \mathbb{R}$, $m(D_n) = m(D_0)$ and
\begin{equation*}
|v_n(\cdot, t)|_{L^\infty} \leq |\omega_0| \sqrt{\frac{m(D_0)}{\pi}}.
\end{equation*}
\end{proposition}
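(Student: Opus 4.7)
The plan is to first identify the approximate patch $D_n(t)$ with the image of $D_0$ under the flow $X_n$, which turns the area claim into a volume-preservation statement, and then bound the velocity by a rearrangement argument that replaces $D_n(t)$ with a ball of equal area centered at $x$.

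For the area, since $\phi^n(x,t) = \phi_0(X_n^{-1}(x,t))$, the condition $\phi^n(x,t) > 0$ is equivalent to $X_n^{-1}(x,t) \in D_0$, so $D_n(t) = X_n(D_0, t)$. The map $X_n(\cdot, t)$ is volume-preserving (as recorded in Proposition~\ref{prop2.1}, reflecting the fact that $v_n = K * \omega^n$ is divergence-free), and therefore $m(D_n(t)) = m(D_0)$.

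For the velocity bound, I would start from $\omega^n(y,t) = \omega_0 H_n(\phi^n(y,t))$ together with $0 \le H_n \le 1$ and $H_n \equiv 0$ on $(-\infty,0]$, which gives the pointwise inequality $|\omega^n(y,t)| \le |\omega_0|\,\mathbf{1}_{D_n(t)}(y)$. Combined with $|K(z)| = (2\pi|z|)^{-1}$, equation~\eqref{eq2.3} reduces the desired estimate to
\[
\int_{D_n(t)} \frac{dy}{|x-y|} \le 2\pi L, \qquad L := \sqrt{m(D_0)/\pi}.
\]
The main step is the rearrangement observation that among measurable sets of fixed area $\pi L^2$, this integral is largest when the set is the ball $B_L(x)$. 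I would prove this directly: since $m(D_n(t)) = \pi L^2 = m(B_L(x))$, the two ``swap'' pieces $D_n(t) \setminus B_L(x)$ and $B_L(x) \setminus D_n(t)$ have equal Lebesgue measure, and $|x-y|^{-1} \le L^{-1}$ on the former while $|x-y|^{-1} \ge L^{-1}$ on the latter, so replacing one piece by the other only increases the integral. The resulting integral over $B_L(x)$ is a one-line polar-coordinates calculation giving exactly $2\pi L$, and so $|v_n(x,t)| \le |\omega_0| L$.

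I do not anticipate a real obstacle. The only item requiring a moment of care is the measurable-set swap underlying the rearrangement; it is elementary, but worth writing out so that the bound $|v_n|_{L^\infty} \le |\omega_0|\sqrt{m(D_0)/\pi}$ appears with the sharp constant (rather than, for example, the weaker constant one would get by splitting the integral at an arbitrary radius and optimizing).
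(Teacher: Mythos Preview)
Your proposal is correct and follows the same route as the paper: area preservation via $D_n(t)=X_n(D_0,t)$ and the volume-preserving flow, then the pointwise bound $|\omega^n|\le|\omega_0|\,\mathbf{1}_{D_n}$ together with the rearrangement inequality replacing $D_n$ by a ball of equal area. The only difference is cosmetic: the paper invokes the rearrangement in a single phrase (``since $1/|x|$ is radially decreasing''), whereas you write out the swap argument explicitly.
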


\begin{proof}
The first claim follows immediately from the facts that $\phi^n$ is transported by the particle-trajectories $\phi^n(x,t) = \phi_0(X_n^{-1}(x,t))$ and that these are volume-preserving. Therefore, $m(B_L(0)) = m(D_n)$. For the second, we have 
\begin{eqnarray*}
|v_n(x,t)| & \leq & |\omega_0| \int_{\mathbb{R}^2} \frac{1}{2\pi} \frac{1}{|x-y|} H_n(\phi^n(y,t)) dy \\ 
& \leq & \frac{|\omega_0|}{2\pi} \int_{D_n} \frac{1}{|x - y|} dy,
\end{eqnarray*} 
where for the last inequality we used that $0 \leq H_n \leq 1$ and that $H_n(\phi^n(x,t)) = 0$ whenever $\phi^n(x,t) \leq 0$. Since the function $\frac{1}{|x|}$ is radially decreasing, we have 
\begin{equation*}
\frac{1}{2\pi} \int_{D_n} \frac{1}{|x - y|} dy  \leq \frac{1}{2\pi} \int_{B_L(0)} \frac{1}{|y|} dy = L. 
\end{equation*}
The second claim follows. 
\end{proof}

We conclude this section by showing that the boundary $\partial D_n$ remains $C^{k, \, \mu}$ for all time. By the inverse mapping theorem, we only need $|\nabla \phi^n (\cdot, t)|_{\inf} > 0$. However, we will prove a slightly stronger result which will be of use in the next section. Let us introduce the notation
\begin{equation*}
|\nabla \phi|_{M-\inf} = \inf \{ |\nabla \phi(x)| \mid x \in \phi^{-1} ([0, 2/M] \}.  
\end{equation*} 
Note that the set $(\phi^n(\cdot, t))^{-1} ([0, 2/n])$ is the domain on which $\omega^n(\cdot, t) = \omega_0 H_n(\phi^n(\cdot,t))$ differs from the step function $\omega_0H(\phi^n(\cdot,t))$. 

\begin{lemma} \label{lemma2.1}
Let $\phi_0$ be a $C^{k, \, \mu}$ initial scalar with $|\nabla \phi_0|_{\inf} \geq m > 0$. Then for all sufficiently large $n \in \mathbb{Z}_+$, $|\nabla \phi_0|_{n - \inf} \geq m/2$.  
\end{lemma}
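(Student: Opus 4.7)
The plan is to argue by contradiction, using continuity of $\nabla \phi_0$ together with compactness of $\overline{D_0}$. The key preliminary observation is that the set $\phi_0^{-1}([0, 2/n])$ is essentially contained in $\overline{D_0}$ as far as the lemma is concerned. Indeed, if $x \in \phi_0^{-1}([0, 2/n])$ then either $\phi_0(x) > 0$, in which case $x \in D_0 \subset \overline{D_0}$, or $\phi_0(x) = 0$, in which case $x \in \phi_0^{-1}(\{0\})$ and hypothesis \eqref{eq1.4} already gives $|\nabla \phi_0(x)| \geq m > m/2$. Thus it suffices to control $|\nabla \phi_0|$ on the set $\overline{D_0} \cap \phi_0^{-1}([0, 2/n])$, which is compact.

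Suppose the conclusion fails. Then there exist $n_k \to \infty$ and points $x_k \in \overline{D_0}$ with $0 \leq \phi_0(x_k) \leq 2/n_k$ and $|\nabla \phi_0(x_k)| < m/2$. Since $\overline{D_0}$ is compact, extract a subsequence (not relabeled) converging to some $x_\infty \in \overline{D_0}$. Continuity of $\phi_0$ forces $\phi_0(x_\infty) = 0$, so $x_\infty \in \phi_0^{-1}(\{0\})$; by hypothesis \eqref{eq1.4}, $|\nabla \phi_0(x_\infty)| \geq m$. On the other hand, since $\phi_0 \in C^{k,\mu}$ with $k \geq 1$, the gradient $\nabla \phi_0$ is continuous, so $|\nabla \phi_0(x_k)| \to |\nabla \phi_0(x_\infty)|$. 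But $|\nabla \phi_0(x_k)| < m/2$ for every $k$, which would give $|\nabla \phi_0(x_\infty)| \leq m/2 < m$, a contradiction.

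Combining the two cases $x \in \phi_0^{-1}(\{0\})$ and $x \in \overline{D_0} \cap \phi_0^{-1}((0, 2/n])$ yields $|\nabla \phi_0|_{n-\inf} \geq m/2$ for all $n$ sufficiently large. There is no real obstacle here: the argument is a routine uniform-continuity/compactness deduction, and the only point that requires care is noticing that points of $\phi_0^{-1}([0, 2/n])$ outside $\overline{D_0}$ must in fact lie on the zero level set (because $\phi_0 \leq 0$ outside $\overline{D_0}$), so no hypothesis beyond $D_0$ being bounded is needed to prevent $x_k$ from escaping to infinity.
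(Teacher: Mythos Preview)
Your proof is correct and follows essentially the same approach as the paper: argue by contradiction, extract a convergent subsequence using compactness of $\overline{D_0}$, and derive a contradiction from continuity of $\nabla\phi_0$ at a limit point on the zero level set. The only cosmetic difference is that the paper asserts $\phi_0^{-1}([0,2])\subset\overline{D_0}$ directly (which also follows from your observation, since any $x$ with $\phi_0(x)=0$ and $|\nabla\phi_0(x)|>0$ must lie in $\overline{D_0}$), whereas you handle the boundary case $\phi_0(x)=0$ separately before invoking compactness.
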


\begin{proof}
Suppose the claim is false. Then there exists a sequence $x_n \in \phi_0^{-1} ([0, 2/n])$ such that $|\nabla \phi_0 (x_n)| < m/2$. Since $\overline{D_0}$ is compact and $\phi_0^{-1} ([0, 2]) \subset \overline{D_0}$ is closed it follows that the sequence is a subset of the compact set $\phi_0^{-1} ([0, 2])$. Therefore, it has a converging subsequence, which we still denote by $x_n$. Let $x$ be the limit. Since for all $n > N$, $x_n \in \phi_0^{-1}([0,2/N])$ which is closed, it follows that
\begin{equation*}
x \in \bigcap_{n = 1}^{\infty} \phi_0^{-1} ([0, 2/n]) = \phi_0^{-1}(\{0\}). 
\end{equation*}
But since $\nabla \phi_0$ is continuous and $|\nabla \phi_0(x_n)| < m/2$ for all $n$, it follows that $|\nabla \phi_0(x)| \leq m/2 < m$, which is a contradiction. 
\end{proof}

We will use the notation $\nabla^\perp = [-\partial_2 \, \, \, \, \partial_1]^T$. 

\begin{proposition} \label{prop2.3}
Assume that $\phi_0$ is a $C^{k, \, \mu}$ initial scalar. Let $(\phi^n, v_n)$ be a solution given by proposition \ref{prop2.1}, and let $M \in \mathbb{Z}_+$ sufficiently large such that $|\nabla \phi_0|_{M-\inf} \geq m/2$.  For all times $t \in \mathbb{R}$, we have 
\begin{equation} \label{eq2.10}
\frac{d}{dt} \nabla^\perp \phi^n(X_n(\alpha, t), t) = \nabla v_n \nabla^\perp \phi^n (X_n(\alpha, t), t).
\end{equation}
Consequently, 
\begin{equation*}
|\nabla \phi^n(\cdot, t)|_{M-\inf} \geq |\nabla \phi_0|_{M-\inf}\exp \bigg[ - \int_0^t |\nabla v_n(\cdot, s)|_{L^\infty} ds \bigg],
\end{equation*}
\begin{equation*}
|\nabla \phi^n(\cdot, t)|_{L^\infty} \leq |\nabla \phi_0|_{L^\infty} \exp \bigg[ \int_0^t |\nabla v_n(\cdot, s)|_{L^\infty} ds \bigg] .
\end{equation*}
\end{proposition}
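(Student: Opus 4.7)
The plan is to first establish the ODE \eqref{eq2.10} by differentiating the transport equation along trajectories, and then read off the two exponential estimates as standard Grönwall bounds, with the only nontrivial ingredient being a short algebraic identity that uses incompressibility of $v_n$.

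To derive \eqref{eq2.10}, I would take the spatial gradient of \eqref{eq2.1}. Using the product rule, this gives $\partial_t \nabla \phi^n + (v_n \cdot \nabla)\nabla \phi^n = -(\nabla v_n)^T \nabla \phi^n$, where $\nabla v_n$ denotes the Jacobian matrix. Evaluating at $(X_n(\alpha,t),t)$ and applying the chain rule to the left-hand side converts the material derivative into an ordinary time derivative, yielding $\frac{d}{dt}\nabla\phi^n(X_n(\alpha,t),t) = -(\nabla v_n)^T \nabla\phi^n(X_n(\alpha,t),t)$. To pass from $\nabla$ to $\nabla^\perp$, let $J$ denote the rotation $\nabla^\perp = J\nabla$. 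Since $v_n$ is the Biot--Savart field of $\omega^n$, the matrix $\nabla v_n$ is trace-free, and a direct $2\times 2$ computation shows that $J A^T = -A J$ for any trace-free matrix $A$. Multiplying the ODE above on the left by $J$ and applying this identity produces precisely \eqref{eq2.10}. The regularity hypotheses from Proposition~\ref{prop2.1} ($\phi^n(\cdot,t)\in C^{k,\mu}$ with $k\geq 1$ and $\nabla v_n \in L^1_{\mathrm{loc}}(\mathbb{R}; L^\infty)$) justify all the differentiations and make the resulting ODE well-posed in an absolutely continuous sense.

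Setting $f(t) := \nabla^\perp \phi^n(X_n(\alpha,t),t)$, equation \eqref{eq2.10} reads $\frac{d}{dt}f = (\nabla v_n)(X_n(\alpha,t),t)\, f$, and therefore $\left|\frac{d}{dt}|f|^2\right| = |2 f\cdot (\nabla v_n) f| \leq 2 |\nabla v_n(\cdot,t)|_{L^\infty}\,|f|^2$. The two-sided Grönwall inequality on $|f|^2$ then yields both $|f(t)| \leq |f(0)|\exp\!\big[\int_0^t |\nabla v_n(\cdot,s)|_{L^\infty}\,ds\big]$ and $|f(t)| \geq |f(0)|\exp\!\big[-\int_0^t |\nabla v_n(\cdot,s)|_{L^\infty}\,ds\big]$. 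Since $|\nabla \phi^n| = |\nabla^\perp \phi^n|$ pointwise and $X_n(\cdot,t)$ is a bijection of $\mathbb{R}^2$, taking the supremum over all $\alpha$ converts the upper Grönwall bound into the claimed $L^\infty$ estimate.

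For the $M\text{-}\inf$ estimate, I would use the Lagrangian identity $\phi^n(X_n(\alpha,t),t)=\phi_0(\alpha)$ to note that $X_n(\cdot,t)$ restricts to a bijection between $\phi_0^{-1}([0,2/M])$ and $\big(\phi^n(\cdot,t)\big)^{-1}([0,2/M])$. Thus any $x$ in the latter set is of the form $X_n(\alpha,t)$ for a unique $\alpha$ in the former, and the lower Grönwall bound on $|f(t)|$ at that specific $\alpha$ gives $|\nabla\phi^n(x,t)| \geq |\nabla\phi_0(\alpha)|\exp\!\big[-\int_0^t |\nabla v_n(\cdot,s)|_{L^\infty}\,ds\big] \geq |\nabla\phi_0|_{M\text{-}\inf}\exp\!\big[-\int_0^t |\nabla v_n(\cdot,s)|_{L^\infty}\,ds\big]$. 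Taking the infimum over such $x$ finishes the proof. I do not anticipate any real obstacle; the only step that requires a moment of thought is recognizing the identity $JA^T = -AJ$ for trace-free $A$, which is exactly what makes the $\nabla^\perp$ formulation preferable to the gradient formulation.
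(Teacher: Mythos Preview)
Your argument is correct when $k\geq 2$, and in fact the paper explicitly records your route in the Remark immediately following the proof: differentiate the transport equation in space, use that $v_n$ is divergence-free to convert $-(\nabla v_n)^T$ acting on $\nabla\phi^n$ into $\nabla v_n$ acting on $\nabla^\perp\phi^n$, and then pass to Lagrangian coordinates. The Gr\"onwall portion and the use of the bijection $X_n(\cdot,t)\colon\phi_0^{-1}([0,2/M])\to(\phi^n(\cdot,t))^{-1}([0,2/M])$ are exactly right.

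The gap is at $k=1$. Your first step is to apply $\nabla$ to \eqref{eq2.1}, producing the term $(v_n\cdot\nabla)\nabla\phi^n$; this requires $\nabla\phi^n(\cdot,t)$ to be spatially differentiable. But Proposition~\ref{prop2.1} only gives $\phi^n(\cdot,t)\in C^{k,\mu}$, so for $k=1$ you have $\nabla\phi^n(\cdot,t)\in C^{0,\mu}$ and nothing more. The sentence ``the regularity hypotheses from Proposition~\ref{prop2.1} \dots\ justify all the differentiations'' is therefore not warranted in the base case, which is precisely the case the paper cares most about (the $C^{1,\mu}$ estimates drive everything in Section~3).

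The paper sidesteps this by never differentiating $\nabla\phi^n$ in $x$. Instead it differentiates the Lagrangian identity $\phi^n(X_n(\alpha,t),t)=\phi_0(\alpha)$ in $\alpha$, recognizes the resulting matrix as $(\nabla_\alpha X_n)^{-1}$, and obtains the closed formula $W_n(X_n(\alpha,t),t)=\nabla_\alpha X_n(\alpha,t)\,W_0(\alpha)$. Differentiating this in $t$ uses only the known ODE $\frac{d}{dt}\nabla_\alpha X_n=\nabla v_n\,\nabla_\alpha X_n$, which requires no extra regularity on $\phi^n$. If you want to keep your Eulerian derivation, you would need to either argue separately that $\phi^n$ is in fact smoother than $C^{1,\mu}$ (it is, since $X_n^{-1}$ is $C^{k+1}$, but you have not invoked this), or reorganize the computation so that second spatial derivatives of $\phi^n$ never appear.
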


\begin{proof}
By differentiating the expression $\phi^n(X_n(\alpha, t), t) = \phi_0(\alpha)$ with respect to the Lagrangian space variables we obtain 
\begin{eqnarray*}
\nabla^\perp \phi_0(\alpha) = \begin{bmatrix}
\partial_{\alpha_2}X_n^2 & - \partial_{\alpha_2} X_n^1 \\ 
- \partial_{\alpha_1} X_n^2 & \partial_{\alpha_1}X_n^1
\end{bmatrix} (\alpha, t) \nabla^\perp \phi^n (X_n(\alpha, t), t).   
\end{eqnarray*}
Note that the matrix above is the inverse of $\nabla_\alpha X_n(\alpha, t)$. Therefore, if we denote $W_n = \nabla^\perp \phi^n$, $W_0 = \nabla^\perp \phi_0$:
\begin{equation*}
W_n(X_n(\alpha, t),t) = \nabla_\alpha X_n(\alpha, t)W_0(\alpha).
\end{equation*}
From \eqref{eq2.8}, we have
\begin{equation*}
\frac{d}{dt} \nabla_\alpha X_n(\alpha, t) = \nabla v_n (X_n(\alpha, t), t) \nabla_\alpha X_n(\alpha, t),
\end{equation*} 
and, therefore
\begin{equation*}
\frac{d}{dt}W_n(X_n(\alpha, t), t) = \nabla v_n(X_n(\alpha, t), t) \nabla_\alpha X_n(\alpha,t) W_0(\alpha) = \nabla v_n W_n(X_n(\alpha, t), t), 
\end{equation*}
as asserted. 

If we now denote $Z_n(\alpha, t) = W_n(X_n(\alpha, t), t)$,
\begin{equation*}
\frac{d}{dt} Z_n(\alpha, t) = \nabla v_n(X_n(\alpha, t), t) Z_n(\alpha, t),
\end{equation*}
so 
\begin{equation*}
\frac{d}{dt} \log |Z_n(\alpha, t)| \leq |\nabla v_n (X_n(\alpha, t),t)|,
\end{equation*}
which implies that 
\begin{equation*}
\exp \bigg[ - \int_0^t |\nabla v_n(\cdot, s)|_{L^\infty} ds \bigg] \leq \frac{|Z_n(\alpha, t)|}{|Z_n(\alpha, 0)|} \leq \exp \bigg[ \int_0^t |\nabla v_n(\cdot, s)|_{L^\infty} ds \bigg].
\end{equation*}
The conclusions follow.
\end{proof}

\begin{remark}
If $\phi_0 \in C^{k, \, \mu}$ with $k \geq 2$ one can obtain \eqref{eq2.10} in a slightly more direct way: it follows from \eqref{eq2.1} and the divergence-free property of the velocity $v_n$ that 
\begin{equation*}
\partial_t W_n + v_n \cdot \nabla W_n = \nabla v_n W_n.
\end{equation*}
Differentiating $W_n(X_n(\alpha, t), t)$ with respect to time immediately gives \eqref{eq2.10}. 
\end{remark}

\section{Uniform \texorpdfstring{$C^{1,\, \mu}$}{C(1,mu)}  Estimates for Solutions of the Approximate Equations}

In this sections we prove the bounds for the solutions of the approximate equations which will allow us to show convergence. The arguments are in essence those applied in \cite{BC} to equations \eqref{eq1.1}-\eqref{eq1.3}. The main difference between the arguments in this section and those presented there is that proving the bounds for $|\nabla v_n|_{L^\infty}$ is slightly more involved due to the more complicated expressions for the vorticities. The idea is to approximate the vorticities $\omega^n$ by a sum of step functions. Each such step function, treated as a vorticity, gives rise to a velocity via the Biot-Savart law, and we can apply the considerations of \cite{BC} in order to bound its gradient. A convergence argument then gives us what we wanted. However, for this to work, we require $|\nabla \phi|_{M-\inf} > 0$, rather than just $|\nabla \phi|_{\inf} > 0$. Lemma \ref{lemma2.1} shows that for large enough $M$, no further further assumptions on $\phi_0$ are necessary.

The following geometric lemma, which is proved in \cite{BC}, states roughly that the intersection of the domain $D = \{x \in \mathbb{R}^2 \mid \phi(x) > 0\}$ (defined by a $C^{1, \, \mu}(\mathbb{R}^2)$ function) with small circles centered close to the boundary is approximately a semi-circle. It is the essential result in the proof of the bounds. 

Let $\phi \in C^{1, \, \mu}(\mathbb{R}^2)$ such that $D = \{x \in \mathbb{R}^2 \mid \phi(x) > 0\}$ is bounded, and $|\nabla \phi|_{\inf} > 0$. Let $d(x_0) = \inf_{x \in \partial D} |x - x_0|$. Define 
\begin{equation} \label{eq3.1}
\delta = \bigg( \frac{|\nabla \phi|_{\inf}}{|\nabla \phi|_\mu} \bigg)^{\frac{1}{\mu}}.
\end{equation}
For $\rho \geq d(x_0)$ consider the set of directions 
\begin{equation*}
S_\rho (x_0) = \{z \in \mathbb{S}^1 \mid x_0 + \rho z \in D \},
\end{equation*}
where $\mathbb{S}^1$ is the unit circle in $\mathbb{R}^2$. There exists $\tilde{x} \in \partial D$, such that $|\tilde{x} - x_0| = d(x_0)$. Consider the semi-circle 
\begin{equation*}
\Sigma(x_0) = \{ z \in S^1 \mid \nabla \phi(\tilde{x}) \cdot z \geq 0 \}.
\end{equation*}
Finally, we define the symmetric difference 
\begin{equation} \label{eq3.2}
R_\rho (x_0) = (S_\rho(x_0) \setminus \Sigma(x_0)) \cup (\Sigma(x_0) \setminus S_\rho(x_0)).
\end{equation}

\begin{lemma} (Geometric Lemma) \label{lemma3.1}
Let $R_\rho(x_0)$ be the symmetric difference defined in \eqref{eq3.2} and let $\mathcal{H}^1$ denote the Lebesgue measure on the unit circle. Then, 
\begin{equation*}
\mathcal{H}^1(R_\rho(x_0)) \leq 2 \pi \bigg( (1 + 2^\mu)\frac{d(x_0)}{\rho} + 2^\mu \bigg( \frac{\rho}{\delta} \bigg)^\mu \bigg)
\end{equation*}
holds for all $\rho \geq d(x_0)$, $\mu \in (0,1)$ and $x_0 \in \mathbb{R}^2$ such that $d(x_0) < \delta$, where $\delta$ is given by \eqref{eq3.1}.
\end{lemma}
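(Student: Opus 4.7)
The plan is to replace $\phi$ near $\tilde{x}$ by its linearization, reducing the estimate to the geometric fact that a circle meets a half-plane in a semicircle, and then to control the linearization error using the $C^{1,\mu}$ regularity. I assume without loss of generality that $x_0 \in D$, so that $x_0 = \tilde{x} + d(x_0)\,n$ where $n = \nabla\phi(\tilde{x})/|\nabla\phi(\tilde{x})|$; the case $x_0 \notin D$ is identical after one sign flip. Parametrizing $z \in \mathbb{S}^1$ by its angle $\theta$ with $n$, one has $n\cdot z = \cos\theta$ and $\Sigma(x_0) = \{\cos\theta \geq 0\}$.

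The main input is a pointwise expansion. Using $\phi(\tilde{x}) = 0$ and the fundamental theorem of calculus along the segment from $\tilde{x}$ to $x_0 + \rho z$,
$$\phi(x_0 + \rho z) = |\nabla\phi(\tilde{x})|\bigl(d(x_0) + \rho\cos\theta\bigr) + E(z),$$
where the H\"older oscillation of $\nabla\phi$ together with $|x_0+\rho z - \tilde{x}| \leq d(x_0)+\rho \leq 2\rho$ yields
$$|E(z)| \leq \frac{|\nabla\phi|_\mu}{1+\mu}|x_0 + \rho z - \tilde{x}|^{1+\mu} \leq \frac{2^{1+\mu}}{1+\mu}|\nabla\phi|_\mu \,\rho^{1+\mu}.$$

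A sign analysis then identifies the two types of contributions to $R_\rho(x_0)$. For $z \in \Sigma\setminus S_\rho$ one has $\cos\theta \geq 0$ and $\phi(x_0+\rho z) \leq 0$, which forces $\cos\theta \leq -d(x_0)/\rho + |E|/(\rho\,|\nabla\phi|_{\inf})$; by the error bound and the definition of $\delta$, the right-hand side is at most a constant times $(\rho/\delta)^\mu$. Symmetrically, for $z \in S_\rho\setminus\Sigma$ one is forced into $-d(x_0)/\rho - C(\rho/\delta)^\mu < \cos\theta < 0$. Converting each $\cos\theta$-band into an arc length on $\mathbb{S}^1$ via $\mathcal{H}^1\{z : n\cdot z \in [a,b]\} = 2(\arccos a - \arccos b)$ together with the elementary inequality $\arcsin x \leq (\pi/2) x$ on $[0,1]$, each band has $\mathcal{H}^1$-measure at most $\pi$ times its width, and summing the two contributions produces an estimate of the shape $C_1 d(x_0)/\rho + C_2 (\rho/\delta)^\mu$.

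The main technical obstacle is the bookkeeping of constants: landing on the precise coefficients $2\pi(1+2^\mu)$ and $2\pi\cdot 2^\mu$ of the statement requires care about whether the offset $d(x_0)/\rho$ enters both bands or only one, and about how tightly one bounds $(d(x_0)+\rho)^{1+\mu}$ by $(2\rho)^{1+\mu}$ in the H\"older error. The hypothesis $d(x_0) < \delta$ is used to ensure that in the relevant non-trivial regime $\rho \leq \delta$ the endpoints of both bands remain in $[-1,1]$ so that the $\arcsin$-comparison is valid; outside this regime the right-hand side already exceeds $2\pi = \mathcal{H}^1(\mathbb{S}^1)$ and the bound is trivial. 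Conceptually the proof isolates exactly two sources of discrepancy: a geometric offset between $x_0$ and its nearest boundary point (giving the $d(x_0)/\rho$ term), and the H\"older linearization error of $\phi$ at $\tilde{x}$ (giving the $(\rho/\delta)^\mu$ term).
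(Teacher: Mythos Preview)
The paper does not actually prove this lemma; it simply quotes the statement and refers to \cite{BC} for the proof. Your sketch is essentially the Bertozzi--Constantin argument: linearize $\phi$ at the nearest boundary point $\tilde{x}$, reduce membership in $S_\rho$ versus $\Sigma$ to a sign condition on $d(x_0)+\rho\cos\theta$ plus a H\"older remainder of size $O\big(|\nabla\phi|_\mu\,\rho^{1+\mu}\big)$, and convert the resulting $\cos\theta$-bands into arclength. The identification of the two mechanisms (offset $d(x_0)/\rho$ and linearization error $(\rho/\delta)^\mu$), the use of $|x_0+\rho z-\tilde{x}|\le 2\rho$, and the observation that the inequality is vacuous once the right-hand side reaches $2\pi$ are all exactly as in the original.

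Your honest remark about the constants is the only soft spot. With the error bound $|E|\le \frac{2^{1+\mu}}{1+\mu}|\nabla\phi|_\mu\rho^{1+\mu}$ and the $\arcsin$ inequality you wrote, the coefficients you produce are not literally $2\pi(1+2^\mu)$ and $2\pi\cdot 2^\mu$; to recover those exact numbers one argues slightly differently (in \cite{BC} the remainder is bounded more crudely by $|\nabla\phi|_\mu(2\rho)^\mu$ on the \emph{gradient} rather than using the $\frac{1}{1+\mu}$ integrated form, and the two bands are handled so that the $d(x_0)/\rho$ term picks up the extra $2^\mu$). Since every downstream use of the lemma in this paper only needs a bound of the shape $C_\mu\big(d(x_0)/\rho+(\rho/\delta)^\mu\big)$ with $C_\mu$ depending only on $\mu$, your version is entirely adequate for the applications, and the strategy is correct.
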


As discussed, in order to apply the geometric lemma to bound the $L^\infty$ norm of $\nabla v_n$, we will need to approximate $\omega^n$ by a sum of step functions. To do this, we have the following elementary lemma: 

\begin{lemma} \label{lemma3.2}
Let $H_n: \mathbb{R} \rightarrow \mathbb{R}$ be defined as in section 2. For each $N \in \mathbb{Z}_+$, consider 
\begin{equation} \label{eq3.3}
H_n^{N} (x) = \sum_{k = 1}^N \bigg[ H_n \bigg( \frac{2k}{nN} \bigg) - H_n \bigg( \frac{2(k-1)}{nN} \bigg) \bigg] H \bigg(x - \frac{2k}{nN}\bigg). 
\end{equation}
Then, $H_n^N$ converge uniformly to $H_n$ as $N \rightarrow \infty$.
\end{lemma}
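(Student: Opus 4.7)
The plan is to exploit the telescoping structure of the sum together with the fact that $H_n$ is smooth with bounded derivative on the compact interval $[0,2/n]$ (and constant outside). First I would split $\mathbb{R}$ into three regions according to how many Heaviside terms in \eqref{eq3.3} fire: the region $x < 0$, the region $x \geq 2/n$, and the interval $[0, 2/n)$ where the approximation is nontrivial.

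For $x<0$, every term $H(x-2k/(nN))$ with $k\geq 1$ vanishes, so $H_n^N(x)=0=H_n(x)$. For $x\geq 2/n$, every term satisfies $x-2k/(nN)\geq 0$, so the sum telescopes to $H_n(2/n)-H_n(0)=1=H_n(x)$. In both extreme regions the equality is exact, which will be the easy part.

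The heart of the argument is the region $x\in[0,2/n)$. Here I would let $k_0=k_0(x,N)$ denote the largest index with $2k_0/(nN)\leq x$. The Heaviside factor $H(x-2k/(nN))$ then equals $1$ precisely for $k\leq k_0$, so the sum telescopes to $H_n^N(x)=H_n(2k_0/(nN))-H_n(0)=H_n(2k_0/(nN))$. Since $0\leq x-2k_0/(nN)<2/(nN)$ by construction, the mean-value theorem yields
\begin{equation*}
|H_n(x)-H_n^N(x)|=|H_n(x)-H_n(2k_0/(nN))|\leq \|H_n'\|_{L^\infty([0,2/n])}\cdot\frac{2}{nN},
\end{equation*}
which is uniform in $x$ and tends to $0$ as $N\to\infty$. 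For fixed $n$, $H_n'$ is bounded on the compact interval $[0,2/n]$ (being smooth), so this bound is meaningful.

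There is really no substantive obstacle: the only point requiring any care is identifying the telescoping correctly and verifying that the bound on $H_n'$ is uniform for fixed $n$. One should note that the rate of convergence degrades with $n$ (since $\|H_n'\|_\infty$ grows like $n$), but for each fixed $n$ the convergence as $N\to\infty$ is uniform, which is exactly what the statement claims.
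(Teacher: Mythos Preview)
Your proposal is correct and follows essentially the same approach as the paper: split into the regions $x\leq 0$, $x\geq 2/n$, and $0\leq x<2/n$, telescope the sum to identify $H_n^N(x)=H_n(2k_0/(nN))$ on the middle region, and apply the mean-value theorem. The paper makes the bound explicit by noting $|H_n'|_{L^\infty}\leq n$, yielding $|H_n-H_n^N|_{L^\infty}\leq 2/N$, which is exactly your estimate $\|H_n'\|_{L^\infty}\cdot 2/(nN)$ once that derivative bound is inserted.
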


\begin{proof}
For $x \geq 2/n$, $H(x - 2k/nN) = 1$ for all $k$, and we obtain after cancelations $H_n^N(x) = H_n(2/n) - H_n(0) = 1$. Therefore $H_n^N(x) = H_n (x)$ for all $N$. For $x \leq 0$, $H(x - 2k/nN) = 0$ for all $k$, so $H_n^N(x) = H_n(x) = 0$ for all $N$. If $x = 2k/nN$ for some $N \geq k \geq 1$, we obtain after cancellations that $H_n^N (x) = H_n(2k/nN) - H_n(0) = H_n(2k/nN)$. Finally, if $x \in (2(k-1)/nN, 2k/nN)$, then $H_n^N (x) = H_n^N(2(k-1)/nN) = H_n(2(k-1)/nN)$. Therefore, by the mean value theorem, 
\begin{equation*}
|H_n(x) - H_n^N(x)| \leq |H_n'|_{L^\infty} |x - 2(k-1)/nN| \leq 2n/nN = 2/N,
\end{equation*}
where we used the fact that $|H_n'|_{L^\infty} \leq n$, which is easy to see from the definition of $H_n$ and the basic properties of mollifiers. Thus, we have obtained that 
\begin{equation*}
|H_n - H_n^N|_{L^\infty} \leq \frac{2}{N},
\end{equation*}
and we conclude that $H_n^N$ converge uniformly to $H_n$ as $N \rightarrow \infty$. 
\end{proof}

Before we go further, let us recall that the gradient of the velocity obeys the equation
\begin{equation} \label{eq3.4}
\nabla v_n (x, t) = \frac{1}{2\pi} \PV \int_{\mathbb{R}^2} \frac{\sigma(x-y)}{|x-y|^2}  \omega^n(y,t) dy + \frac{\omega^n(x,t)}{2} \begin{bmatrix}
0 & -1 \\
1 & 0
\end{bmatrix},
\end{equation}
where PV denotes the Cauchy principle-value, and 
\begin{equation*}
\sigma(x) = \frac{1}{|x|^2} \begin{bmatrix}
2 x_1 x_2 & x_2^2 - x_1^2 \\
x_2^2 - x_1^2 & - 2 x_1 x_2
\end{bmatrix}.
\end{equation*}
The important characteristics of $\sigma(x)$ are that it is smooth away from the origin, it is homogeneous of degree 0, it has mean zero on the unit circle $\int_{\mathbb{S}^1} \sigma(x) dS(x) = 0$, and it is symmetric with respect to reflections $\sigma (x) = \sigma(-x)$. 

Fix $M \in \mathbb{Z}_+$ sufficiently large such that for all $n \geq M$ the claim of lemma \ref{lemma2.1} holds. From now on, we restrict the discussion to such large $n$. In view of \eqref{eq3.3}, consider for some fixed $r \in [0,2/n] \subset [0,2/M]$,
\begin{equation} \label{eq3.5}
w_r(x,t) = \frac{\omega_0}{2\pi} \PV \int_{\mathbb{R}^2 }\frac{\sigma (x - y)}{|x-y|^2} H(\phi^n(y,t) - r) dy.
\end{equation}
Let $D_n^r = \{x \in \mathbb{R}^2 \mid \phi^n(x,t) > r\}$, $\pi L_r^2 = m(D_n^r)$, $d_r(x) = \inf_{y \in \partial D_n^r} |x - y|$ and 
\begin{equation*}
\delta_r = \bigg( \frac{\inf \{|\nabla \phi^n(x,t)| \mid x \in (\phi^n(\cdot, t))^{-1} (\{r\})\}}{|\nabla \phi^n(\cdot, t)|_\mu} \bigg)^{\frac{1}{\mu}} \geq  \bigg( \frac{|\nabla \phi^n(\cdot, t)|_{M - \inf}}{|\nabla \phi^n(\cdot, t)|_\mu} \bigg)^{\frac{1}{\mu}} > 0. 
\end{equation*}
We now show that the $L^\infty$ norm of $w_r$ is bounded by an expression of $\delta_r$ and $L_r$. The following proposition is proved in \cite{BC} with slightly different notation. For the convenience of the reader, we repeat the proof here. 

\begin{proposition} \label{prop3.1}
Let $w_r$ be given by \eqref{eq3.5} and let $\delta_r$ and $L_r$ as above. Suppose $M$ is sufficiently large such that lemma \ref{lemma2.1} holds and let $n \geq M$. Then, 
\begin{equation} \label{eq3.6}
|w_r(\cdot, t)|_{L^\infty} \leq C(\mu) |\omega_0| \bigg( 1 + \log \bigg(1 + \frac{L_r}{\delta_r} \bigg) \bigg),
\end{equation}
where the constant $C(\mu)$ depends only on the H\"older coefficient $\mu$.
\end{proposition}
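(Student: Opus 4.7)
The plan is to bound the principal value integral pointwise by splitting $\mathbb{R}^2$ into three annular regions around the fixed point $x$, exploiting two cancellation properties of $\sigma$: that it has mean zero over $\mathbb{S}^1$, and that by the symmetry $\sigma(z)=\sigma(-z)$ it also has mean zero over any semicircle. Before splitting, I would reduce to the case $d_r(x) < \delta_r$, since if $d_r(x) \geq \delta_r$ the argument simplifies (the entire ball $B_{d_r(x)}(x)$ lies on one side of $\partial D_n^r$ and one skips the middle region).

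For the inner region $|x-y| \leq d_r(x)$, the Heaviside factor $H(\phi^n(\cdot,t)-r)$ is constant, so passing to polar coordinates $y = x + \rho z$ and using the $0$-homogeneity together with $\int_{\mathbb{S}^1}\sigma(z)\,d\mathcal{H}^1(z)=0$ kills the contribution. For the middle region $d_r(x) \leq |x-y| \leq \delta_r$, I would again go to polar coordinates to get
\begin{equation*}
\int_{d_r(x)}^{\delta_r} \frac{1}{\rho}\int_{S_\rho(x)} \sigma(-z)\,d\mathcal{H}^1(z)\,d\rho,
\end{equation*}
and compare $S_\rho(x)$ against the semicircle $\Sigma(x)$. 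Since $\int_{\Sigma(x)} \sigma\,d\mathcal{H}^1 = 0$ by the reflection symmetry, the inner integral is bounded by $|\sigma|_{L^\infty}\mathcal{H}^1(R_\rho(x))$, so Lemma \ref{lemma3.1} (applied with $\phi = \phi^n(\cdot,t)-r$, which is permissible because $\delta_r$ is defined using $|\nabla\phi^n(\cdot,t)|_{M-\inf}$ and $r \leq 2/M$) gives two pieces: $\rho^{-2}d_r(x)$ integrates to a constant, and $\rho^{\mu-1}\delta_r^{-\mu}$ integrates to $1/\mu$. This produces a bound $C(\mu)|\omega_0|$ from the middle region.

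For the outer region $|x-y| \geq \delta_r$, the kernel is no longer singular, and I would drop the $\PV$ and estimate by $|\sigma|_{L^\infty}$ times $\int_{|x-y|\geq \delta_r,\, y \in D_n^r}|x-y|^{-2}\,dy$. Since $y \mapsto |x-y|^{-2}\mathbf{1}_{|x-y|\geq \delta_r}$ is radially symmetric and decreasing on its support, and $m(D_n^r) \leq m(D_0) = \pi L_r^2$ (actually $L_r \leq L$, but we only need the area bound here, with $\pi L_r^2 = m(D_n^r)$), a layer-cake/rearrangement argument bounds this by the integral over the annulus $\delta_r \leq |x-y| \leq \sqrt{\delta_r^2+L_r^2}$, yielding $\pi\log(1+L_r^2/\delta_r^2) \leq 2\pi\log(1+L_r/\delta_r)$. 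Combined with the cancellation estimates of the other two regions and the trivial bound $|\omega^n(x,t)/2| \leq |\omega_0|/2$ (which is actually irrelevant here since $w_r$ is defined with a sharp Heaviside and no boundary term appears; one just estimates \eqref{eq3.5} directly), we obtain \eqref{eq3.6}.

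The only subtle point is verifying that Lemma \ref{lemma3.1} applies uniformly to the level sets $\{\phi^n(\cdot,t)=r\}$ for every $r\in[0,2/n]$: one needs the denominator in $\delta$ replaced by the Hölder seminorm of $\nabla\phi^n(\cdot,t)$ (unchanged under the shift $\phi^n-r$) and the numerator by the infimum of $|\nabla\phi^n(\cdot,t)|$ on $\{\phi^n(\cdot,t)=r\}$, which is exactly what the $M$-inf notation captures once $n\geq M$. This is precisely the reason for introducing $|\nabla\phi|_{M-\inf}$ and for Lemma \ref{lemma2.1}, and it is what makes the argument go through for the mollified vorticities.
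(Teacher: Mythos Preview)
Your proof is correct and follows essentially the same approach as the paper: the paper splits into just two regions $\{|x-y|\geq\delta_r\}$ and $\{|x-y|\leq\delta_r\}$, handling your innermost ball $\{|x-y|\leq d_r(x)\}$ implicitly by starting the radial integral at $\rho=d_r(x)$ in the case $d_r(x)<\delta_r$ and disposing of the case $d_r(x)\geq\delta_r$ separately. The only other cosmetic difference is in the outer-region rearrangement, where the paper compares with the annulus $\delta_r\leq|x-y|\leq L_r+\delta_r$ rather than $\delta_r\leq|x-y|\leq\sqrt{\delta_r^2+L_r^2}$, arriving at $\log(1+L_r/\delta_r)$ directly.
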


\begin{proof}
We split the integral,
\begin{equation*}
w_r(x,t) = \frac{\omega_0}{2 \pi} \bigg[ \int_{D_n^r \cap \{|x - y| \geq \delta_r\}} \frac{\sigma (x - y)}{|x-y|^2}  dy + \PV \int_{D_n^r \cap \{|x - y| \leq \delta_r\}}  \frac{\sigma (x - y)}{|x-y|^2} dy \bigg] = I_1(x) + I_2(x). 
\end{equation*}

For $I_1(x)$, since $\frac{1}{|x|^2}$ is radially decreasing and $m(D_n^r \cap \{|x - y| \geq \delta_r\}) \leq m(D_n^r) = \pi L_r^2 \leq \pi (L_r+ \delta_r)^2 - \pi \delta_r^2$, we have after changing to polar coordinates:
\begin{equation*}
|I_1(x)| \leq \sqrt{2} |\omega_0| \int_{\delta_r}^{L_r + \delta_r} \frac{1}{r} dr = \sqrt{2}|\omega_0| \log \bigg(1 + \frac{L_r}{\delta_r} \bigg),  
\end{equation*}
where we used $|\sigma(x)| = \sqrt{2}$. 

For $I_2(x)$, we distinguish two cases. If $d_r(x) \geq \delta_r$, then either $D_n^r \cap \{|x - y| \leq \delta_r\} = \emptyset$ if $x$ is in the complement of $D_n^r$, so $I_2(x) = 0$; or $D_n^r \cap \{|x - y| \leq \delta_r\} = \{|x - y| \leq \delta_r\}$ if $x \in D_n^r$, in which case we have $I_2(x) = 0$ since $\sigma$ has mean zero on circles. If on the other hand, $d_r(x) < \delta_r$, we change to polar coordinates and use the fact that the integral of $\sigma$ is zero on semi-circles (so in particular on $\Sigma(x)$) to obtain 
\begin{equation*}
|I_2(x)| \leq \sqrt{2} \frac{|\omega_0|}{2 \pi} \int_{d_r(x)}^{\delta_r} \mathcal{H}^1(R_\rho(x))\frac{1}{\rho} d\rho. 
\end{equation*}
Applying lemma \ref{lemma3.1} and integrating, we find 
\begin{equation*}
|I_2(x)| \leq \sqrt{2} |\omega_0| \bigg[ (1 + 2^\mu) + \frac{2^\mu}{\mu} \bigg].
\end{equation*}
The conclusion follows. 
\end{proof}

\begin{remark}
For $\epsilon > 0$, let
\begin{equation*}
w_r^\epsilon(x,t) = \frac{\omega_0}{2\pi} \int_{\{|x-y| \geq \epsilon\} }\frac{\sigma (x - y)}{|x-y|^2} H(\phi^n(y,t) - r) dy.
\end{equation*}
The $L^\infty$ norm of this quantity is also bounded by the right-hand side of \eqref{eq3.6} if we choose $\epsilon < \delta_r$. Indeed, the proof is exactly as above, with the only possibly contentious case presented by $d_r(x) < \epsilon < \delta_r$, but in this case we simply have  
\begin{equation*}
|I_2(x)| \leq \sqrt{2} \frac{|\omega_0|}{2 \pi} \int_{\epsilon}^{\delta_r} \mathcal{H}^1(R_\rho(x))\frac{1}{\rho} d\rho \leq \sqrt{2} \frac{|\omega_0|}{2 \pi} \int_{d_r(x)}^{\delta_r} \mathcal{H}^1(R_\rho(x))\frac{1}{\rho} d\rho.
\end{equation*}
\end{remark}

Let us now define the following approximation of the symmetric part of $\nabla v_n$: 
\begin{equation} \label{eq3.7}
w^N(x,t) = \frac{\omega_0}{2\pi} \PV \int_{\mathbb{R}^2} \frac{\sigma(x-y)}{|x-y|^2} H_n^N(\phi^n(y,t))dy,
\end{equation}
Where $H_n^N$ is defined by \eqref{eq3.3}. Let $\pi L^2 = m(D_n) = m(D_0)$ and define 
\begin{equation*}
\Delta_n = \bigg( \frac{|\nabla \phi^n(\cdot, t)|_{M - \inf}}{|\nabla \phi^n(\cdot, t)|_\mu} \bigg)^{\frac{1}{\mu}}.
\end{equation*}

\begin{corollary} \label{cor3.1}
Let $w^N$ be given by \eqref{eq3.7} and let $L$ and $\Delta_n$ as above. Suppose $M$ is sufficiently large such that lemma \ref{lemma2.1} holds and let $n \geq M$. Then, 
\begin{equation} \label{eq3.8}
|w^N(\cdot, t)|_{L^\infty} \leq C(\mu) |\omega_0| \bigg( 1 + \log \bigg(1 + \frac{L}{\Delta_n} \bigg) \bigg),
\end{equation} 
where $C(\mu)$ is the constant given by proposition \ref{prop3.1}. 
\end{corollary}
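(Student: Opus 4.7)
The plan is to view $w^N$ as a nonnegative linear combination of functions of the form $w_r$ from \eqref{eq3.5} and then apply Proposition \ref{prop3.1} term by term. Substituting the definition \eqref{eq3.3} of $H_n^N$ into \eqref{eq3.7} and setting $r_k = 2k/(nN)$, I would rewrite
\begin{equation*}
w^N(x,t) = \sum_{k=1}^N \Big[ H_n\Big( \tfrac{2k}{nN} \Big) - H_n\Big( \tfrac{2(k-1)}{nN} \Big) \Big] w_{r_k}(x,t),
\end{equation*}
since each summand in $H_n^N$ is, up to a scalar weight, exactly a shifted Heaviside of the form appearing in \eqref{eq3.5}. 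The weights are nonnegative because $H_n$ is monotone nondecreasing, and they telescope to $H_n(2/n) - H_n(0) = 1$.

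Next I would verify that the hypotheses of Proposition \ref{prop3.1} apply to each $w_{r_k}$ uniformly in $k$. Since $r_k \in [0, 2/n] \subset [0, 2/M]$, the level set $(\phi^n(\cdot,t))^{-1}(\{r_k\})$ is contained in $(\phi^n(\cdot,t))^{-1}([0, 2/M])$, so
\begin{equation*}
\inf\{|\nabla \phi^n(x,t)| \mid \phi^n(x,t) = r_k\} \geq |\nabla \phi^n(\cdot, t)|_{M-\inf},
\end{equation*}
which yields $\delta_{r_k} \geq \Delta_n$. Likewise $D_n^{r_k} \subset D_n$ so $L_{r_k} \leq L$. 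Because the function $s \mapsto 1 + \log(1 + s)$ is monotone, Proposition \ref{prop3.1} gives
\begin{equation*}
|w_{r_k}(\cdot,t)|_{L^\infty} \leq C(\mu) |\omega_0| \Big( 1 + \log\Big( 1 + \tfrac{L}{\Delta_n} \Big) \Big)
\end{equation*}
for every $k$.

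Finally, the triangle inequality combined with nonnegativity and the telescoping identity gives
\begin{equation*}
|w^N(\cdot,t)|_{L^\infty} \leq \Big( \sum_{k=1}^N [H_n(r_k) - H_n(r_{k-1})] \Big) C(\mu) |\omega_0| \Big( 1 + \log\Big( 1 + \tfrac{L}{\Delta_n} \Big) \Big),
\end{equation*}
and the bracketed sum equals $1$, yielding \eqref{eq3.8}. I do not anticipate a significant obstacle here: the only subtlety is making sure that the principal value in \eqref{eq3.7} genuinely decomposes into the principal values defining the $w_{r_k}$, but this is immediate since $H_n^N$ is a finite sum of translated Heavisides, so the PV commutes with the finite sum and no extra cancellation issues arise.
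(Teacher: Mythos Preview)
Your proof is correct and follows essentially the same approach as the paper: decompose $w^N$ as a nonnegative telescoping combination of the $w_{r_k}$, bound each term uniformly via $L_{r_k}\le L$ and $\delta_{r_k}\ge \Delta_n$ in Proposition~\ref{prop3.1}, and use that the weights sum to $1$. The only cosmetic difference is that you make explicit the monotonicity of $s\mapsto 1+\log(1+s)$ and the fact that the principal value commutes with the finite sum, both of which the paper leaves implicit.
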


\begin{proof}
First, let us note that for each $r \in [0, 2/n]$, $D_n^r \subset D_n$, so $L_r \leq L$. Moreover, it is also clear by definition that $\Delta_n \leq \delta_r$. Therefore, plugging this into \eqref{eq3.6} we obtain that for all $r \in [0,2/n]$, 
\begin{equation*}
|w_r(\cdot, t)|_{L^\infty}  \leq C(\mu) |\omega_0| \bigg( 1 + \log \bigg(1 + \frac{L}{\Delta_n} \bigg) \bigg). 
\end{equation*}

On the other hand, 
\begin{equation*}
w^N(x,t) = \sum_{k=1}^N \bigg[ H_n \bigg( \frac{2k}{nN} \bigg) - H_n \bigg( \frac{2(k-1)}{nN} \bigg) \bigg] \frac{\omega_0}{2\pi} \PV \int_{\mathbb{R}^2} \frac{\sigma (x - y)}{|x-y|^2} H(\phi^n(y,t) - 2k/nN) dy.
\end{equation*}
Each term (ignoring the constant in the parenthesis) is of the form $w_r$, with $r = 2k/nN$. Therefore, if we apply the triangle inequality and note the fact that $H_n$ is monotonically increasing (so the constant in parenthesis is non-negative for all $k$) we obtain, after cancellations, the wanted inequality. 
\end{proof}

\begin{remark}
For $\epsilon > 0$, we define 
\begin{equation*}
w^N_\epsilon(x,t) = \frac{\omega_0}{2\pi} \int_{\{|x-y| \geq \epsilon\}} \frac{\sigma(x-y)}{|x-y|^2} H_n^N(\phi^n(y,t))dy.
\end{equation*}
If we choose $\epsilon < \Delta_n \leq \delta_r$ (any $r \in [0, 2/n] \subset [0, 2/M]$), $|w^N_\epsilon(\cdot, t)|_{L^\infty}$ is also bounded by the right-hand side of \eqref{eq3.8}. See the remark after proposition \ref{prop3.1}. 
\end{remark}

We are now ready to prove the desired bound for the $L^\infty$ norm of $\nabla v_n$. 

\begin{proposition} \label{prop3.2}
Let $(\phi^n, v_n)$ be a solution given by proposition \ref{prop2.1}, for $n \geq M$ and $M$ sufficiently large such that lemma \ref{lemma2.1} holds. Then, 
\begin{equation*}
|\nabla v_n(\cdot, t)|_{L^\infty} \leq C |\omega_0| \bigg( 1 + \log \bigg(1 + \frac{L}{\Delta_n} \bigg) \bigg),
\end{equation*}
where the constant $C$ only depends on $\mu$ (in particular, it is independent of time, or $n$). 
\end{proposition}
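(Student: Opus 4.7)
The plan is to combine the decomposition of $\nabla v_n$ in equation \eqref{eq3.4} with the uniform bound of corollary \ref{cor3.1}, using a two-stage limiting procedure to pass from the step-function approximations $H_n^N$ back to the full PV integral with density $\omega^n = \omega_0 H_n(\phi^n)$. The antisymmetric matrix term in \eqref{eq3.4} is immediately bounded by $|\omega_0|/2$, since $0 \leq H_n \leq 1$ gives $|\omega^n(x,t)| \leq |\omega_0|$ pointwise. So the whole task is to bound
\begin{equation*}
I(x,t) \,:=\, \frac{1}{2\pi}\PV\!\int_{\mathbb{R}^2} \frac{\sigma(x-y)}{|x-y|^2}\,\omega^n(y,t)\,dy
\end{equation*}
by $C(\mu)|\omega_0|\bigl(1 + \log(1 + L/\Delta_n)\bigr)$.

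The first step is to truncate the singularity. Fix $\epsilon \in (0,\Delta_n)$ and set
\begin{equation*}
I_\epsilon(x,t) \,=\, \frac{\omega_0}{2\pi}\int_{\{|x-y|\geq \epsilon\}} \frac{\sigma(x-y)}{|x-y|^2}\,H_n(\phi^n(y,t))\,dy,
\end{equation*}
which is the same object as the $w^\infty_\epsilon$ one would get by replacing $H_n^N$ with $H_n$ in the remark after corollary \ref{cor3.1}. The remark gives, for every $N$ and every $\epsilon < \Delta_n$, the bound $|w^N_\epsilon(\cdot,t)|_{L^\infty} \leq C(\mu)|\omega_0|(1+\log(1+L/\Delta_n))$, and I plan to transfer this bound to $I_\epsilon$ by sending $N\to\infty$.

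The second step is exactly that limit. By proposition \ref{prop2.1}, $\omega^n(\cdot,t)$ is supported in the compact set $X_n(\overline{D_0},t)$, so the density $H_n(\phi^n(y,t))$ is supported in a fixed compact set $K(t)$. On $K(t) \cap \{|x-y|\geq \epsilon\}$ the kernel $\sigma(x-y)/|x-y|^2$ is bounded by $\sqrt{2}/\epsilon^2$, and lemma \ref{lemma3.2} gives $|H_n - H_n^N|_{L^\infty} \leq 2/N$. Hence $w^N_\epsilon(x,t) \to I_\epsilon(x,t)$ uniformly in $x$ as $N\to\infty$, and the bound from corollary \ref{cor3.1} passes to the limit: $|I_\epsilon(\cdot,t)|_{L^\infty} \leq C(\mu)|\omega_0|(1+\log(1+L/\Delta_n))$, uniformly in $\epsilon \in (0,\Delta_n)$.

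The third step is to send $\epsilon\to 0$. Because $\omega^n(\cdot,t)$ is $C^{1,\mu}$ by proposition \ref{prop2.1} and compactly supported, the standard Calderón--Zygmund argument (using the mean-zero property of $\sigma$ on circles) shows that the principal value $I(x,t)$ exists and equals $\lim_{\epsilon\to 0} I_\epsilon(x,t)$ at every $x$. The $\epsilon$-uniform bound above therefore yields $|I(\cdot,t)|_{L^\infty} \leq C(\mu)|\omega_0|(1+\log(1+L/\Delta_n))$, and adding the $|\omega_0|/2$ contribution from the local term in \eqref{eq3.4} completes the proof. The only delicate point is the first limit: one must make sure the bound given by corollary \ref{cor3.1} really is independent of $N$ and of $\epsilon$ as long as $\epsilon < \Delta_n$, and this is precisely what the remark after corollary \ref{cor3.1} was set up to supply, so the argument reduces to the two routine convergence statements above.
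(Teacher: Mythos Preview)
Your proof is correct and follows essentially the same route as the paper: bound the local matrix term trivially, fix $\epsilon<\Delta_n$, use the remark after corollary \ref{cor3.1} to bound $w^N_\epsilon$ uniformly in $N$, pass to $N\to\infty$ via lemma \ref{lemma3.2} and the compact support of $\omega^n$, then let $\epsilon\to 0$. The only cosmetic difference is that the paper phrases the middle step as ``choose $N=N(\epsilon)$ large'' rather than ``send $N\to\infty$'', which amounts to the same thing.
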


\begin{proof}
Clearly, the second term in \eqref{eq3.4} satisfies this bound since $|\omega^n|$ is bounded by $|\omega_0|$. So, we only have to address the symmetric part of $\nabla v_n$. Let $\epsilon > 0$. We have that for all $N \in \mathbb{Z}_+$
\begin{eqnarray*}
\bigg| \frac{1}{2\pi} \int_{|x - y| \geq \epsilon} \frac{\sigma(x-y)}{|x-y|^2}  \omega^n(y,t) dy \bigg| & \leq & \bigg| w_\epsilon^N(x,t) \bigg| + \\
& + & \bigg| \frac{\omega_0}{2\pi} \int_{|x-y| \geq \epsilon} \frac{\sigma(x-y)}{|x-y|^2} \bigg[H_n(\phi^n(y,t)) - H_n^N(\phi^n(y,t)) \bigg]dy \bigg|,
\end{eqnarray*}
where $w_\epsilon^N$ is defined as in the previous remark. We choose $\epsilon < \Delta_n$ (see the remark after corollary \ref{cor3.1}), to obtain 
\begin{equation*}
| w_\epsilon^N(x,t)| \leq C(\mu) |\omega_0| \bigg( 1 + \log \bigg(1 + \frac{L}{\Delta_n} \bigg) \bigg). 
\end{equation*}
On the other hand, as seen in the proof of lemma \ref{lemma3.2}, $H_n(\phi^n(y,t)) - H_n^N(\phi^n(y,t))$ is nonzero only for $y \in (\phi^n(\cdot, t))^{-1}([0, 2/n]) \subset \overline{D_n}$. Therefore, 
\begin{equation*}
\bigg| \frac{\omega_0}{2\pi} \int_{|x-y| \geq \epsilon} \frac{\sigma(x-y)}{|x-y|^2} \bigg[H_n(\phi^n(y,t)) - H_n^N(\phi^n(y,t)) \bigg]dy \bigg| \leq \frac{\sqrt{2}|\omega_0|}{2\pi} \frac{1}{\epsilon^2} m(D_0) |H_n(\cdot) - H_n^N(\cdot)|_{L^\infty}.
\end{equation*}
Since this holds for all $N$, by lemma \ref{lemma3.2}, we can choose $N = N(\epsilon)$ sufficiently large such that 
\begin{equation*}
\bigg| \frac{1}{2\pi} \int_{\{|x - y| \geq \epsilon\}} \frac{\sigma(x-y)}{|x-y|^2}  \omega^n(y,t) dy \bigg| \leq C(\mu) |\omega_0| \bigg( 1 + \log \bigg(1 + \frac{L}{\Delta_n} \bigg) \bigg) + \epsilon,
\end{equation*}
for all $\epsilon < \Delta_n$. Taking the limit $\epsilon \rightarrow 0$ concludes the proof. 
\end{proof} 

We now quote a general lemma that is proved in \cite{BC}, and also add a simple observation of which we will make use in section 6. 

\begin{lemma} \label{lemma3.3}
Let K be a Calderon-Zygmund kernel, homogeneous of degree $-N$, with mean zero on spheres, satisfying $|\nabla K(x)|\leq C |x|^{-N-1}$. There exists a constant $C_0$, so that all $f \in C^{0, \, \mu}(\mathbb{R}^N)$ and $\omega \in L^\infty(\mathbb{R}^N)$ satisfy
\begin{equation} \label{eq3.9}
|G|_\mu \leq C_0 (\mu, N)|f|_\mu (|K*\omega|_{L^\infty} + |\omega|_{L^\infty}),
\end{equation}
where 
\begin{equation*}
G(x) = \PV \int_{\mathbb{R}^N} K(x-y)(f(x) - f(y)) \omega(y) dy.
\end{equation*}
If, moreover, the support of $\omega$ has finite Lebesgue measure, let $m(\supp(\omega)) = m(B_1) L^N$, where $B_1$ is the unit ball in $\mathbb{R}^N$. Then, for each $R>0$,  
\begin{equation} \label{eq3.10}
|G|_{L^\infty} \leq C_0 |\omega|_{L^\infty}\bigg(|f|_\mu R^\mu + |f|_{L^\infty} \log \big(1 + \frac{L}{R} \big)\bigg).
\end{equation}
\end{lemma}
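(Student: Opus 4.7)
Both bounds are classical Calder\'on--Zygmund type estimates that I would prove by splitting $G$ into a near and a far piece at an appropriate scale and exploiting the H\"older decay of $f(x)-f(y)$ to tame the singularity of $K$ near the diagonal, while using either the compact support of $\omega$ (for \eqref{eq3.10}) or the mean-value bound on $\nabla K$ (for \eqref{eq3.9}) on the far piece.

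For \eqref{eq3.10}, fix $x$ and write $G(x)$ as the sum of the near integral over $\{|x-y|<R\}$ and the far PV integral over $\{|x-y|\geq R\}$. The near integrand is absolutely bounded by $C|f|_\mu |\omega|_{L^\infty}|x-y|^{\mu-N}$, so polar coordinates give a contribution $\leq C\mu^{-1}|f|_\mu|\omega|_{L^\infty} R^\mu$. On the far piece, bound $|f(x)-f(y)| \leq 2|f|_{L^\infty}$ and $|K(x-y)|\leq C|x-y|^{-N}$. Since $|K|$ is radially decreasing and $\supp\omega$ has measure $m(B_1)L^N$, a rearrangement shows the integral over $\supp\omega\cap\{|x-y|\geq R\}$ is largest when the support is the annulus $R\leq |x-y| \leq (R^N+L^N)^{1/N}$, producing $\leq C_N \log(1+L/R)$. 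Summing yields \eqref{eq3.10}.

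For \eqref{eq3.9}, fix $x_1, x_2$ with $h=|x_1-x_2|$ and use the identity
\begin{equation*}
G(x_1) - G(x_2) = (f(x_1)-f(x_2))(K*\omega)(x_1) + \PV\int \big[K(x_1-y) - K(x_2-y)\big](f(x_2)-f(y))\omega(y)\,dy,
\end{equation*}
which is verified by expanding and regrouping the PV integrals. The first term is bounded immediately by $|f|_\mu h^\mu |K*\omega|_{L^\infty}$. For the PV term, decompose the domain into the near region $B(x_1, 2h) \cup B(x_2, 2h)$ and its complement. On the far region, $|K(x_1-y)-K(x_2-y)| \leq Ch|x_1-y|^{-N-1}$ (by the mean value theorem, using the gradient bound on $K$) together with $|f(x_2)-f(y)|\leq C|f|_\mu |x_1-y|^\mu$ produces an integrand $\leq Ch|f|_\mu|\omega|_{L^\infty}|x_1-y|^{\mu-N-1}$, which integrates to $\leq C|f|_\mu h^\mu|\omega|_{L^\infty}$. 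On each near ball, the smooth kernel contributes $\leq C|f|_\mu h^\mu|\omega|_{L^\infty}$ by direct estimation; the singular kernel is handled by writing $f(x_2)-f(y) = (f(x_2)-f(x_i)) + (f(x_i)-f(y))$, where the second summand gives an absolutely convergent piece of size $|f|_\mu h^\mu|\omega|_{L^\infty}$, and the first reduces to $(f(x_2)-f(x_i))\PV\int_{B(x_i,2h)}K(x_i-y)\omega(y)\,dy$, whose PV-truncation is controlled by a Cotlar-type maximal estimate $|\PV\int_{|x_i-y|<2h}K(x_i-y)\omega(y)\,dy|\leq C(|K*\omega|_{L^\infty}+|\omega|_{L^\infty})$. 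Summing all contributions produces \eqref{eq3.9}.

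The main subtlety is justifying the algebraic manipulations of the conditionally convergent PV integrals: all such manipulations should be carried out on truncations $\int_{|x_i-y|>\epsilon}$ and only then passed to the limit $\epsilon \to 0^+$, using the mean-zero property of $K$ on spheres. The appearance of $|K*\omega|_{L^\infty}$ on the right-hand side of \eqref{eq3.9} is essential and not incidental: local CZ-truncations of $K*\omega$ cannot be controlled by $|\omega|_{L^\infty}$ alone, so the hypothesis that $K*\omega$ is bounded is the final ingredient, delivered via the Cotlar inequality.
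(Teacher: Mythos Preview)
Your proof of \eqref{eq3.10} is essentially identical to the paper's: split at radius $R$, control the near piece by $C|f|_\mu|\omega|_{L^\infty}R^\mu$ via polar coordinates, and rearrange $\supp\omega$ in the far piece into an annulus of inner radius $R$ and measure $m(B_1)L^N$ to extract the logarithm (the paper uses the slightly cruder outer radius $R+L$ rather than your $(R^N+L^N)^{1/N}$, but both give $\log(1+L/R)$). For \eqref{eq3.9} the paper gives no argument at all and simply cites \cite{BC}, so there is nothing to compare against; your sketch---the algebraic identity isolating $(f(x_1)-f(x_2))(K*\omega)(x_1)$, followed by a near/far decomposition at scale $|x_1-x_2|$ and a Cotlar-type control of the local truncation of $K*\omega$---is the standard proof and is correct.
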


\begin{proof}
The first claim, \eqref{eq3.9}, is proved in \cite{BC}. We only prove \eqref{eq3.10}. Let $R>0$. 
\begin{eqnarray*}
|G(x)| & \leq &  \int_{|x-y| < R}|f(x) - f(y)||K(x-y)||\omega(y)| dy + \\ 
& & + \int_{|x-y| \geq R}|f(x) - f(y)||K(x-y)||\omega(y)| dy \\ 
& = & T_1 + T_2. 
\end{eqnarray*}
For the first term we use the H\"older semi-norms: 
\begin{equation*}
T_1 \leq C |f|_{\mu} |\omega|_{L^\infty}\int_{|y| \leq R} |y|^{\mu - N} dy = C|f|_\mu |\omega|_{L^\infty} R^\mu.
\end{equation*}
For the second, 
\begin{equation*}
T_2 \leq C |f|_{L^\infty}|\omega|_{L^\infty} \int_{\supp(\omega) \cap \{|x-y| \geq R\}} \frac{1}{|x-y|^N}dy.
\end{equation*}
By the radial monotonicity of $\frac{1}{|x|^N}$ and the fact that $m(\supp(\omega) \cap \{|x-y| \geq R\}) \leq m(\supp(\omega)) = m(B_1) L^N \leq m(B_1)[(L+R)^N - R^N] = m(B_{L+R} \setminus B_{R}) $, we can bound the integral above: 
\begin{equation*}
\int_{\supp(\omega) \cap \{|x-y| \geq R\}} \frac{1}{|x-y|^N}dy \leq C \int_R^{L+R} \frac{1}{r} dr = C \log \big(1 + \frac{L}{R}\big).
\end{equation*}
The conclusion follows. 
\end{proof}

\begin{remark}
In fact, we have shown that if $\omega$ has support of finite measure (which is, indeed, the case for our sequence of interest $\omega^n$), then the integrand in the definition of $G$ is in $L^1$, and, therefore, by the dominated convergence theorem, the principal-value converges to the integral and can be discarded.
\end{remark}

The proof of the proposition that follows is very similar to that given in \cite{BC}, but here, the arguments have to be slightly modified in order to treat the non-constant vorticity.

\begin{proposition} \label{prop3.3}
Let $(\phi^n, v_n)$ be a solution given by proposition \ref{prop2.1}, and let $W_n = \nabla^\perp \phi_n$. Then, 
\begin{equation} \label{eq3}
\nabla v_n(x) W_n(x) = \frac{1}{2\pi} \PV \int_{\mathbb{R}^2} \frac{\sigma(x-y)}{|x-y|^2}\omega^n(y)(W_n(x) - W_n(y)) dy. 
\end{equation}
\end{proposition}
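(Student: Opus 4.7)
The plan is to compute $\nabla v_n(x) W_n(x)$ by redistributing derivatives inside the convolution $v_n = K * \omega^n$, exploiting the crucial algebraic fact that $W_n \cdot \nabla \omega^n \equiv 0$. Since $\omega^n$ is smooth and compactly supported and $K \in L^1_{\mathrm{loc}}(\mathbb{R}^2)$, a first integration by parts in the convolution (after the change of variables $z = x-y$) gives the non-singular representation
\[
\partial_{x_j} v_n^i(x) = \int_{\mathbb{R}^2} K^i(x-y)\, \partial_{y_j} \omega^n(y)\, dy,
\]
so that, contracting with $W_n^j(x)$ and summing over $j$,
\[
[\nabla v_n(x) W_n(x)]^i = \int_{\mathbb{R}^2} K^i(x-y)\, W_n(x) \cdot \nabla \omega^n(y)\, dy.
\]

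The algebraic key is that $W_n(y) \cdot \nabla \omega^n(y) \equiv 0$ pointwise: from $\omega^n = \omega_0 H_n \circ \phi^n$ one has $\nabla \omega^n = \omega_0 H_n'(\phi^n) \nabla \phi^n$, parallel to $\nabla \phi^n$, while $W_n = \nabla^\perp \phi^n \perp \nabla \phi^n$. We may therefore replace $W_n(x)$ by $W_n(x) - W_n(y)$ inside the integrand at no cost. Combining this with $\diver W_n = \diver \nabla^\perp \phi^n = 0$, the integrand becomes a $y$-divergence (with $W_n(x)$ a frozen vector):
\[
(W_n(x) - W_n(y)) \cdot \nabla_y \omega^n(y) = \nabla_y \cdot \bigl\{ (W_n(x) - W_n(y))\, \omega^n(y) \bigr\}.
\]

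A second integration by parts then moves the $y$-derivative onto $K^i(x-y)$. A direct computation gives $\partial_j K^i(z) = \tfrac{1}{2\pi} \sigma_{ij}(z)/|z|^2$ for $z\neq 0$, hence $\partial_{y_j} K^i(x-y) = -\tfrac{1}{2\pi}\sigma_{ij}(x-y)/|x-y|^2$, and reassembling the components yields exactly the asserted identity. The main obstacle is justifying this last step in the presence of the singularity of $K$ at $y = x$: one performs the integration by parts on $\{|y-x|>\epsilon\}$ and lets $\epsilon \to 0$. Since $|K^i(x-y)| \lesssim \epsilon^{-1}$, $|W_n(x) - W_n(y)| \lesssim \epsilon^\mu |\nabla\phi^n|_\mu$ by H\"older continuity, and the length element is $dS \sim \epsilon$, the boundary term on the circle of radius $\epsilon$ scales as $\epsilon^\mu$ and vanishes in the limit. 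The limiting integral is absolutely convergent, since $|W_n(x) - W_n(y)|\lesssim |x-y|^\mu$ reduces the $|x-y|^{-2}$ singularity to the integrable $|x-y|^{\mu-2}$ in $\mathbb{R}^2$; this same absolute convergence is why the principal value appearing in the statement coincides with the ordinary integral.
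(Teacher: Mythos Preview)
Your argument uses the same ingredients as the paper's proof---Green's theorem on $\{|x-y|>\epsilon\}$, the orthogonality $W_n\cdot\nabla\omega^n=0$, and the divergence-free property of $W_n$---but runs the integration by parts in the opposite direction: you first exploit the smoothness of $\omega^n$ to write $\nabla v_n = K * \nabla\omega^n$ as an ordinary (non-singular) integral, insert $W_n(x)-W_n(y)$ for free, and only then move the derivative back onto $K$. This is a clean reorganization and is correct in spirit.

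There is, however, one technical point you pass over. The identity
\[
(W_n(x)-W_n(y))\cdot\nabla_y\omega^n(y)=\nabla_y\cdot\bigl\{(W_n(x)-W_n(y))\,\omega^n(y)\bigr\}
\]
and the subsequent divergence theorem on $\{|y-x|>\epsilon\}$ require $W_n\in C^1$, i.e.\ $\phi^n\in C^2$. Proposition~2.1 only guarantees $\phi^n(\cdot,t)\in C^{k,\mu}$, and in the base case $k=1$ this gives $W_n\in C^{0,\mu}$ only, so $\diver W_n=0$ is merely distributional and your second integration by parts is not immediately justified. The paper confronts exactly this issue: it mollifies $W_n$ to $W_n^\delta=\rho_\delta * W_n$, observes that $W_n^\delta$ is smooth and still divergence-free (since $W_n^\delta=\nabla^\perp\rho_\delta * \phi^n$), performs the Green's-theorem step with $W_n^\delta$, and then passes to the limit $\delta\to0$ by dominated convergence. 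Inserting the same mollification into your argument closes the gap.
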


\begin{proof}
The result relies on applying Green's theorem and noticing that all but the boundary terms cancel due to the the fact that $W_n \omega^n$ is (weakly) divergence-free. However, $W_n$ need not be differentiable. To rectify this, let $\rho \in C_0^\infty(\mathbb{R}^2)$ be a standard mollifier: $0 \leq \rho(x) \leq 1$, $\rho(x) = 0$ for $|x| \geq 1$ and $\int_{\mathbb{R}^2} \rho(x) dx =1$. For $\delta > 0$, denote $\rho_\delta(x) = \delta^{-2} \rho(x/\delta)$, and $W_n^\delta = \rho_\delta * W_n$. Then, note that
\begin{equation*}
W_n^\delta(x) = \int_{\mathbb{R}^2} \rho_\delta(x-y) \nabla^\perp \phi^n(y) dy = \int_{\mathbb{R}^2} \nabla^\perp \rho_\delta(x-y) \phi^n(y) dy.
\end{equation*}
Since $\nabla^\perp \rho_\delta$ is divergence-free, it follows that so is $W^\delta_n$. 

Using Green's theorem, together with the observation $\nabla_x K(x - y) = - \nabla_y K(x-y)$, we obtain
\begin{eqnarray*}
\int_{|x-y| \geq \epsilon} \nabla_x K(x - y) W_n^\delta(y) \omega^n(y) dy  =  && \int_{|x - y| \geq \epsilon} K(x-y) \diver (\omega^n(y) W_n^\delta(y)) dy - \\ 
& - & \int_{|x-y| = \epsilon} K(x -y) \bigg[ \omega^n(y) W_n^\delta(y) \cdot \frac{x-y}{\epsilon} \bigg] dS(y). 
\end{eqnarray*}
For the first integral, we use the fact that $W_n^\delta$ is divergence-free: 
\begin{equation*}
\int_{|x - y| \geq \epsilon} K(x-y) \diver (\omega^n(y) W_n^\delta(y)) dy = \int_{|x - y| \geq \epsilon} K(x-y) \ W_n^\delta(y) \cdot \nabla \omega^n(y) dy.
\end{equation*}
Then, taking $\delta \rightarrow 0$ on both sides and using the dominated convergence theorem, the equation becomes 
\begin{eqnarray*}
\int_{|x-y| \geq \epsilon} \nabla_x K(x - y) W_n(y) \omega^n(y) dy  =  && \int_{|x - y| \geq \epsilon} K(x-y) W_n(y) \cdot \nabla \omega(y) dy - \\ 
& - & \int_{|x-y| = \epsilon} K(x -y) \bigg[ \omega^n(y) W_n(y) \cdot \frac{x-y}{\epsilon} \bigg] dS(y). 
\end{eqnarray*}
Note that $\nabla \omega^n(y) = H_n'(\phi(y)) \nabla \phi(y)$, so $W_n(y) \cdot \nabla \omega^n(y) = 0$, and the first integral vanishes. For the second, we change coordinates to $z = \frac{x-y}{\epsilon}$ and use the fact that $K$ is homogeneous of degree $-1$: 
\begin{equation*}
 \int_{|x-y| = \epsilon} K(x -y) \bigg[ \omega^n(y) W_n(y) \cdot \frac{x-y}{\epsilon} \bigg] dS(y) = \int_{|z| = 1} K(z) \omega^n(x - \epsilon z) W_n(x - \epsilon z) \cdot z dS(z).
\end{equation*}
Taking now $\epsilon \rightarrow 0$ and denoting the $i$-th component of $W_n$ by $W_n^i$: 
\begin{eqnarray*}
\lim_{\epsilon \rightarrow 0} \int_{|z| = 1} K(z) \omega^n(x - \epsilon z) W_n(x - \epsilon z) \cdot z dS(z)& =& \sum_{i=1}^2 \omega_n(x) W_n^i(x) \int_{|z| = 1} K(z) z_i dS(z) \\ 
& = & \frac{1}{2} \begin{bmatrix}
0 & -1 \\
1 & 0
\end{bmatrix} \omega^n(x) W_n(x). 
\end{eqnarray*}
The conclusion follows. 
\end{proof}

Proposition \ref{prop3.3} and lemma \ref{lemma3.3} give the following corollary. 

\begin{corollary} \label{cor3.2}
Let $(\phi^n, v_n)$ be a solution given by proposition \ref{prop2.1} with $n$ sufficiently large such that proposition \ref{prop3.2} applies; and let $W_n = \nabla^\perp \phi^n$. Then, there exists a constant $C_0$, depending only on $\mu$, such that 
\begin{equation*}
|\nabla v_n W_n|_\mu \leq C_0 |W_n|_\mu (|\nabla v_n|_{L^\infty} + |\omega_0|). 
\end{equation*}
\end{corollary}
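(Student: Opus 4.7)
The plan is to combine the representation formula from Proposition \ref{prop3.3} with the H\"older estimate \eqref{eq3.9} of Lemma \ref{lemma3.3}, applied to the singular integral in its right-hand side.

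First, I would identify the pieces. Set $K(x) = \frac{1}{2\pi}\frac{\sigma(x)}{|x|^2}$, $f = W_n$, and $\omega = \omega^n$. The kernel $K$ is a standard Calder\'on--Zygmund kernel on $\mathbb{R}^2$: it is smooth away from the origin, homogeneous of degree $-2$ (i.e.\ of degree $-N$ with $N=2$), has mean zero on circles, and satisfies $|\nabla K(x)| \leq C|x|^{-3}$, since these properties have already been recorded for $\sigma$ right after \eqref{eq3.4}. Proposition \ref{prop3.3} rewrites $\nabla v_n W_n$ precisely as the operator $G$ featuring in Lemma \ref{lemma3.3}:
\begin{equation*}
\nabla v_n(x) W_n(x) = \PV \int_{\mathbb{R}^2} K(x-y)\bigl(W_n(x) - W_n(y)\bigr)\omega^n(y)\, dy.
\end{equation*}
So Lemma \ref{lemma3.3} applies componentwise and yields
\begin{equation*}
|\nabla v_n W_n|_\mu \leq C_0(\mu,2)\, |W_n|_\mu \bigl(|K \ast \omega^n|_{L^\infty} + |\omega^n|_{L^\infty}\bigr).
\end{equation*}

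Next I would convert the right-hand side into the terms appearing in the statement. By \eqref{eq3.4}, the convolution $K \ast \omega^n$ is exactly the principal-value (symmetric) part of $\nabla v_n$, namely
\begin{equation*}
K \ast \omega^n = \nabla v_n - \frac{\omega^n}{2}\begin{bmatrix} 0 & -1 \\ 1 & 0 \end{bmatrix},
\end{equation*}
so $|K \ast \omega^n|_{L^\infty} \leq |\nabla v_n|_{L^\infty} + \tfrac{1}{2}|\omega^n|_{L^\infty}$. Since $0 \leq H_n \leq 1$, we have $|\omega^n|_{L^\infty} \leq |\omega_0|$, and combining these bounds collapses the two right-hand terms into $|\nabla v_n|_{L^\infty} + \tfrac{3}{2}|\omega_0|$, which is controlled by $|\nabla v_n|_{L^\infty} + |\omega_0|$ up to absorbing the $3/2$ into the constant.

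There is no genuine obstacle here, since Proposition \ref{prop3.3} has already done the hard work of commuting $\nabla v_n$ past $W_n$ through a Green's-theorem argument and exploiting the orthogonality $W_n \cdot \nabla \omega^n = 0$. The only small care required is to confirm that the right-hand side of Proposition \ref{prop3.3} really fits the template of Lemma \ref{lemma3.3} (which it does, once $K$ is verified to be Calder\'on--Zygmund) and to bookkeep the constant so that it depends only on $\mu$.
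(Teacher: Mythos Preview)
Your proof is correct and follows exactly the route the paper indicates: the paper simply states that Corollary~\ref{cor3.2} follows from Proposition~\ref{prop3.3} and Lemma~\ref{lemma3.3}, and you have filled in precisely those details, including the bookkeeping that converts $|K\ast\omega^n|_{L^\infty}+|\omega^n|_{L^\infty}$ into $|\nabla v_n|_{L^\infty}+|\omega_0|$ via \eqref{eq3.4} and $|\omega^n|_{L^\infty}\leq|\omega_0|$.
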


We now show that the $\mu$-H\"older semi-norm of $\nabla \phi^n(\cdot, t)$ is controlled by $\int_0^t |\nabla v_n(\cdot,s)|_{L^\infty} ds$. The proof is essentially that presented in \cite{BC}, which we repeat for ease of comparison with the results of section 6.  

\begin{proposition} \label{prop3.4}
Let $(\phi^n, v_n)$ be a solution given by proposition \ref{prop2.1}, with $n$ sufficiently large such that proposition \ref{prop3.2} applies. Then, there exists a constant $C$ depending only on $\mu$ such that 
\begin{equation*}
|\nabla \phi^n(\cdot, t)|_\mu \leq |\nabla \phi_0|_\mu \exp \bigg [ C \bigg( |\omega_0| |t| + \int_0^t |\nabla v_n(\cdot, s)|_{L^\infty} ds  \bigg) \bigg].
\end{equation*}
holds for any $t \in \mathbb{R}$.
\end{proposition}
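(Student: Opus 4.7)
My plan is to pull everything back to Lagrangian coordinates using Proposition \ref{prop2.3}, then turn the quotient defining $|\nabla \phi^n(\cdot,t)|_\mu$ into a differential inequality, and finally close with two applications of Gr\"onwall's lemma.

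Concretely, set $W_n = \nabla^\perp \phi^n$, $Z_n(\alpha,t) = W_n(X_n(\alpha,t),t)$, and write $A(t) = \int_0^t |\nabla v_n(\cdot,s)|_{L^\infty}\,ds$. Since $X_n(\cdot,t)$ is a diffeomorphism, $|W_n(\cdot,t)|_\mu$ equals the supremum over distinct pairs $\alpha,\beta$ of $f_{\alpha\beta}(t) := |Z_n(\alpha,t) - Z_n(\beta,t)|/|X_n(\alpha,t)-X_n(\beta,t)|^\mu$. By \eqref{eq2.10},
\begin{equation*}
\tfrac{d}{dt}|Z_n(\alpha,t) - Z_n(\beta,t)| \leq |(\nabla v_n W_n)(X_n(\alpha,t),t) - (\nabla v_n W_n)(X_n(\beta,t),t)| \leq |\nabla v_n W_n(\cdot,t)|_\mu\,|X_n(\alpha,t)-X_n(\beta,t)|^\mu,
\end{equation*}
while from $\tfrac{d}{dt} X_n = v_n(X_n,t)$ one has $\tfrac{d}{dt}|X_n(\alpha,t) - X_n(\beta,t)|^\mu \geq -\mu |\nabla v_n(\cdot,t)|_{L^\infty}|X_n(\alpha,t)-X_n(\beta,t)|^\mu$. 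The quotient rule then yields
\begin{equation*}
\tfrac{d}{dt} f_{\alpha\beta}(t) \leq |\nabla v_n W_n(\cdot,t)|_\mu + \mu|\nabla v_n(\cdot,t)|_{L^\infty}f_{\alpha\beta}(t).
\end{equation*}

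Gr\"onwall applied to $f_{\alpha\beta}$, followed by taking the supremum over $\alpha,\beta$ (and noting $f_{\alpha\beta}(0) \leq |\nabla \phi_0|_\mu$), gives
\begin{equation*}
|W_n(\cdot,t)|_\mu \leq |\nabla\phi_0|_\mu e^{\mu A(t)} + \int_0^t e^{\mu(A(t)-A(s))} |\nabla v_n W_n(\cdot,s)|_\mu\,ds.
\end{equation*}
Now invoke Corollary \ref{cor3.2} to bound $|\nabla v_n W_n(\cdot,s)|_\mu \leq C_0 |W_n(\cdot,s)|_\mu (|\nabla v_n(\cdot,s)|_{L^\infty} + |\omega_0|)$. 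Setting $g(t) := |W_n(\cdot,t)|_\mu e^{-\mu A(t)}$, the inequality becomes
\begin{equation*}
g(t) \leq |\nabla\phi_0|_\mu + C_0 \int_0^t g(s)\bigl(|\nabla v_n(\cdot,s)|_{L^\infty} + |\omega_0|\bigr)\,ds,
\end{equation*}
and a second application of Gr\"onwall produces $g(t) \leq |\nabla\phi_0|_\mu \exp[C_0(A(t) + |\omega_0||t|)]$. Multiplying back by $e^{\mu A(t)}$ and absorbing $\mu$ into the constant yields the claimed bound with $C = C_0 + \mu$.

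The only genuine subtlety is the passage from the pointwise bound on each $f_{\alpha\beta}$ to the semi-norm $|W_n(\cdot,t)|_\mu$: rather than differentiating a supremum, one applies Gr\"onwall to $f_{\alpha\beta}$ first and takes the supremum afterwards, which is why I organized the argument in integrated form. Everything else is a routine combination of the Lagrangian identity \eqref{eq2.10}, the flow-distortion estimates implicit in $\tfrac{d}{dt} X_n = v_n(X_n,t)$, and Corollary \ref{cor3.2}.
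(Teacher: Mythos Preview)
Your argument is correct and lands on exactly the same integral inequality as the paper before the second Gr\"onwall step; the only organizational difference is that the paper fixes Eulerian points and integrates along the \emph{backward} trajectories $Y_n(x,t;\tau)$, using $|\nabla Y_n(\cdot,t;t-\tau)|_{L^\infty}\le \exp\big[\int_\tau^t |\nabla v_n|_{L^\infty}\big]$ to control the H\"older quotient, whereas you fix Lagrangian labels and differentiate the quotient $f_{\alpha\beta}$ directly along the forward flow. Both routes use the same ingredients (\eqref{eq2.10}, the flow-distortion bound, and Corollary~\ref{cor3.2}) and close identically, so this is essentially the same proof.
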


\begin{proof}
Denote $W_n = \nabla^\perp \phi^n$. Let $Y_n(x, t; \tau)$ denote the backward particle trajectories. They return the position at time $t-\tau$ of a particle which at time $t$ is at position $x$, and are defined by the ODE 
\begin{equation*}
\frac{d}{d \tau} Y_n(x, t; \tau) = - v_n(Y_n(x, t; \tau), t-\tau), \,\,\,\,\,\, Y_n(x,t;0) = x. 
\end{equation*}
In particular, we have $Y_n(x, t; t) = X_n^{-1}(x,t)$. 

Arguing as in the proof of proposition \ref{prop2.3}, we obtain:
\begin{eqnarray*}
\frac{d}{d \tau} W_n(Y_n(x,t; \tau), t - \tau) = - \nabla v_n W_n (Y_n(x, t; \tau), t - \tau). 
\end{eqnarray*}
Therefore, integrating from $0$ to $t$,
\begin{equation*}
W_n(X_n^{-1}(x,t), 0) - W_n(x, t) = \int_0^t \nabla v_n W_n (Y_n(x, t; t - \tau), \tau) d \tau, 
\end{equation*}
which implies 
\begin{eqnarray*}
|W_n(x, t) - W_n(y,t)|& \leq & |W_n^0 (X_n^{-1}(x, t)) - W_n^0 (X_n^{-1}(y, t)) + \\
& + &  \int_0^t \big|\nabla v_n W_n (Y_n(x, t; t - \tau), \tau) - \nabla v_n W_n (Y_n(y, t; t -\tau),\tau)\big| d\tau \\
&\leq& |W_n^0|_{\mu} |\nabla X_n^{-1}(\cdot, t)|_{L^\infty}^\mu |x - y|^{\mu} + \\
& + & \int_0^t |\nabla v_n W_n (\cdot, \tau)|_{\mu}|\nabla  Y_n(\cdot, t; t - \tau)|_{L^\infty}^\mu|x - y|^\mu d\tau \\ 
& \leq & |W_n^0|_{\mu} \exp \bigg [\mu \int_0^{t} |\nabla v_n(\cdot, s)|_{L^\infty} ds \bigg] |x - y|^\mu + \\
& +& \int_0^{t} |\nabla v_n W_n(\cdot,  \tau)|_{\mu} \exp \bigg [ \mu \int_\tau^t |\nabla v_n(\cdot, s)|_{L^\infty} ds \bigg] |x - y|^{\mu} d\tau,
\end{eqnarray*}
where for the last inequality, we used that, by taking $\nabla$ in the ODE defining $Y(x, t; \tau)$ and applying Gr\"onwall's lemma, we obtain  
\begin{equation*}
|\nabla Y_n(\cdot, t; t-\tau)|_{L^\infty} \leq \exp \bigg[\int_\tau^t |\nabla v_n(\cdot, s)|_{L^\infty} ds \bigg].
\end{equation*}
Using corollary \ref{cor3.2}, 
\begin{eqnarray*}
|W_n(\cdot, t)|_\mu & \leq & |W_n^0|_\mu \exp \bigg [\mu \int_0^{t} |\nabla v_n(\cdot, s)|_{L^\infty} ds \bigg] + \\
& + & C_0 \int_0^t (|\nabla v_n(\cdot, \tau)|_{L^\infty} + |\omega_0|) |W_n(\cdot, \tau)|_\mu \exp \bigg [ \mu \int_\tau^t |\nabla v_n(\cdot, s)|_{L^\infty} ds  \bigg].
\end{eqnarray*}
We denote $G(t) = |W_n(\cdot, t)|_\mu \exp \big[ - \mu \int_0^t |\nabla v_n(\cdot, s)|_{L^\infty} ds \big]$ and multiply the last inequality by $\exp \big[ - \mu \int_0^t |\nabla v_n(\cdot, s)|_{L^\infty} ds \big]$ to obtain
\begin{equation*}
G(t) \leq G(0) + C_0 \int_0^t (|\nabla v_n(\cdot, \tau)|_{L^\infty} + |\omega_0|) G(\tau) d\tau.
\end{equation*}
Together with Gr\"onwall's lemma, this implies 
\begin{equation*}
|W(\cdot, t)|_\mu \leq |W_0^n|_\mu \exp \bigg [ C_0 |\omega_0|t + (C_0 + \mu) \int_0^t |\nabla v_n(\cdot, s)|_{L^\infty} ds  \bigg],
\end{equation*}
which concludes the proof.
\end{proof}

Propositions \ref{prop2.3}, \ref{prop3.2}, and \ref{prop3.4} give the desired uniform bounds, which we express in the following corollary. 

\begin{corollary} \label{cor3.3}
Let $(\phi^n, v_n)$ be a solution given by proposition \ref{prop2.1}, and let $n \geq M$ with $M$ sufficiently large such that proposition \ref{prop3.2} applies. Then, there exists a constant $C$ depending only on $\phi_0$ and $\mu$ (in particular, independent of time or $n$) such that for any time $t \in \mathbb{R}$ we have
\begin{equation} \label{eq3.11}
|\nabla v_n|_{L^\infty} \leq C |\omega_0| \exp \bigg[C |\omega_0| |t|    \bigg]
\end{equation}
\begin{equation*}
|\nabla \phi^n(\cdot, t)|_{M-\inf} \geq |\nabla \phi_0|_{M-\inf} \exp \bigg[ \exp \big[ - C |\omega_0| |t| \big] \bigg]
\end{equation*}
\begin{equation*}
|\nabla \phi^n(\cdot, t)|_{L^\infty} \leq |\nabla \phi_0|_{L^\infty} \exp \bigg[ \exp \big[ C |\omega_0| |t| \big] \bigg]
\end{equation*}
\begin{equation*}
|\nabla \phi^n(\cdot, t)|_\mu \leq |\nabla \phi_0|_{\mu} \exp \bigg[ \exp \big[C |\omega_0| |t| \big] \bigg]
\end{equation*}
\end{corollary}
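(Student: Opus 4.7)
The plan is to close a logarithmic Grönwall-type inequality for $|\nabla v_n(\cdot,t)|_{L^\infty}$ by combining the three previous propositions, and then to substitute the resulting bound into those same propositions to extract double-exponential bounds for the remaining quantities. All four estimates follow from the single quantity $\int_0^t |\nabla v_n(\cdot,s)|_{L^\infty}\,ds$, so it suffices to control this integral.

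First, I would expand the definition of $\Delta_n$ and use propositions \ref{prop2.3} and \ref{prop3.4} to bound the ratio $L/\Delta_n$ appearing in proposition \ref{prop3.2}. Concretely, since
\[
\frac{1}{\Delta_n} = \left(\frac{|\nabla \phi^n(\cdot,t)|_\mu}{|\nabla \phi^n(\cdot,t)|_{M-\inf}}\right)^{1/\mu},
\]
the upper bound from proposition \ref{prop3.4} on the numerator and the lower bound from proposition \ref{prop2.3} on the denominator combine to give
\[
\frac{L}{\Delta_n} \leq C_0 \exp\!\left[\frac{C}{\mu}\bigl(|\omega_0||t| + \textstyle\int_0^t |\nabla v_n(\cdot,s)|_{L^\infty}\,ds\bigr)\right],
\]
where $C_0$ depends only on $\phi_0$, $m$, $L$ and $\mu$. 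Taking the logarithm and inserting into proposition \ref{prop3.2} yields
\[
|\nabla v_n(\cdot,t)|_{L^\infty} \leq C_1 |\omega_0| + C_1 |\omega_0|^2 |t| + C_1 |\omega_0| \int_0^t |\nabla v_n(\cdot,s)|_{L^\infty}\,ds
\]
for a new constant $C_1$ depending only on $\phi_0$ and $\mu$.

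Next I would apply Grönwall's inequality to the absolutely continuous function $f(t) = \int_0^t |\nabla v_n(\cdot,s)|_{L^\infty}\,ds$, which satisfies $f'(t) \leq (C_1 |\omega_0| + C_1 |\omega_0|^2 |t|) + C_1 |\omega_0| f(t)$. This gives a bound of the form $f(t) \leq C|\omega_0||t| \exp(C|\omega_0||t|)$, and substituting back produces the first inequality
\[
|\nabla v_n(\cdot,t)|_{L^\infty} \leq C |\omega_0| \exp[C |\omega_0||t|],
\]
after adjusting the constant $C$. The key observation is that the linear-in-$t$ forcing plus a linear-in-$f$ drift combines with a single exponential into the stated single-exponential bound.

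Finally, I would feed the single-exponential bound \eqref{eq3.11} back into propositions \ref{prop2.3} and \ref{prop3.4}. Since $\int_0^t|\nabla v_n(\cdot,s)|_{L^\infty}\,ds \leq \exp[C|\omega_0||t|]$ (after enlarging $C$), inserting this into the exponential estimates for $|\nabla \phi^n|_{M-\inf}$, $|\nabla \phi^n|_{L^\infty}$, and $|\nabla \phi^n|_\mu$ produces the three remaining bounds with the characteristic double exponential. The only subtlety is bookkeeping: one must verify that the same constant $C$ (depending only on $\phi_0$ and $\mu$, and in particular absorbing $|\phi_0|_{C^{1,\mu}}$, $m$, and $L$) works for all four estimates and is independent of $n$ and $t$. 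I do not expect any real obstacle here — the proof is a direct closing argument — but the care lies in propagating the dependence on $\phi_0$ through $L$, $|\nabla \phi_0|_{M-\inf}$, and $|\nabla \phi_0|_\mu$ cleanly and in verifying that the uniformity in $n$ (coming from the use of $|\nabla \phi_0|_{M-\inf} \geq m/2$ via lemma \ref{lemma2.1}) is preserved throughout the substitutions.
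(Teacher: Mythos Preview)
Your proposal is correct and follows essentially the same route as the paper: bound $L/\Delta_n$ via propositions \ref{prop2.3} and \ref{prop3.4}, insert into proposition \ref{prop3.2} to obtain a linear Gr\"onwall inequality for $|\nabla v_n|_{L^\infty}$, and then substitute back to get the double-exponential bounds. The only cosmetic difference is that the paper splits into the cases $L/\Delta_n \lessgtr 1$ to replace $\log(1+L/\Delta_n)$ by $1+\log(L/\Delta_n)$ before applying the exponential bound on $\Delta_n$, whereas you absorb this directly; both lead to the same linear integral inequality.
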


\begin{proof}
Inequality \eqref{eq3.11} implies the rest by direct calculation. For \eqref{eq3.11}, we have from proposition \ref{prop3.2}, that, if $L/\Delta_n < 1$ then 
\begin{equation*}
|\nabla v_n|_{L^\infty} \leq C |\omega_0|,
\end{equation*}
which implies that \eqref{eq3.11} holds in this case. If $L/ \Delta_n \geq 1$, note that $\log(1 + x) \leq 1 + \log x$ for $x \geq 1$, so 
\begin{equation*}
|\nabla v_n|_{L^\infty} \leq C |\omega_0| \bigg(1 + \log \bigg(\frac{L}{\Delta_n} \bigg) \bigg). 
\end{equation*}
Proposition \ref{prop3.4} and \ref{prop2.3} imply that 
\begin{equation*}
\frac{L}{\Delta_n(t)} \leq \frac{L}{\Delta_n(0)} \exp \bigg[ C|\omega_0| |t| + C \int_0^t |\nabla v(\cdot, s)|_{L^\infty} ds \bigg].
\end{equation*}
Therefore, 
\begin{equation*}
|\nabla v_n|_{L^\infty} \leq C |\omega_0| \bigg(1 + \log(L/ \Delta_n(0))  + |\omega_0|| t| + \int_0^t |\nabla v_n(\cdot, s)|_{L^\infty} ds \bigg).
\end{equation*}
But, by definition
\begin{equation*}
\Delta_n(0) = \bigg(\frac{|\nabla \phi_0|_{M-\inf}}{|\nabla \phi_0|_{\mu}} \bigg)^{\frac{1}{\mu}},
\end{equation*}
so $\Delta_n(0)$ depends only on $\phi_0$. Therefore, we can absorb $\log(L / \Delta_n(0))$ in the constant
\begin{equation*}
|\nabla v_n|_{L^\infty} \leq C |\omega_0| \bigg(1  + |\omega_0| |t| + \int_0^t |\nabla v_n(\cdot, s)|_{L^\infty} ds \bigg).
\end{equation*}
Gr\"onwall's lemma concludes the proof.
\end{proof}

\section{Existence of \texorpdfstring{$C^{1,\, \mu}$}{C(1, mu)}  Vortex Patches}

In this section, we use the uniform bounds to find a subsequence converging to a function $\phi$. We then show that $\phi$ solves equations \eqref{eq1.1}-\eqref{eq1.3}. The idea of the proof is similar to the one presented in \cite{MB} for the existence of Yudovich weak solutions, and some of the necessary inequalities are proved here in the same way.

\begin{proposition} \label{prop4.1}
Let $(\phi^n, v_n)$ be solutions given by proposition \ref{prop2.1} with $n \in \mathbb{Z}_+$. Then, there exists a subsequence, which we still denote $(\phi^n, v_n)$, and a function $\phi:\mathbb{R}^2 \times \mathbb{R} \rightarrow \mathbb{R}$, $\phi (\cdot, t) \in C^{1, \, \mu}(\mathbb{R}^2)$, $|\nabla \phi(\cdot, t)|_{\inf} > 0$, $\phi(x,0) = \phi_0(x)$ such that \\
(i) $\phi^n$ and $\nabla \phi^n$ converge uniformly on compact sets of $\mathbb{R}^2 \times \mathbb{R}$ to $\phi$ and $\nabla \phi$, respectively; \\
(ii) If we define $\omega(x,t) = \omega_0 H(\phi(x,t))$, then for each time $t \in \mathbb{R}$, $\omega^n(\cdot, t)$ converges to $\omega (\cdot, t)$ in $L^1$. \\
(iii) If we define $v$ as in \eqref{eq1.3}, then $v$ is continuous in both space and time and, for each time $t \in \mathbb{R}$, $v_n(\cdot, t)$ converge uniformly to $v(\cdot, t)$. More precisely, there exists a constant $C > 0$ such that for any $\epsilon > 0$, 
\begin{equation} \label{eq4.1}
|v(\cdot, t) - v_n(\cdot, t)|_{L^\infty} \leq C\bigg[ \epsilon + \frac{1}{\epsilon}|\omega^n(\cdot, t) - \omega(\cdot, t)|_{L^1}\bigg].
\end{equation}
\end{proposition}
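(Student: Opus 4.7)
My plan is to obtain the limit by Arzelà--Ascoli on compact subsets of $\mathbb{R}^2 \times \mathbb{R}$, using the uniform estimates of corollary \ref{cor3.3} in space and the transport structure of the equations to promote them to equicontinuity in time; then to identify the limits of $\omega^n$ and $v_n$ by a measure-zero argument for $\{\phi = 0\}$ combined with a standard near/far splitting of the Biot--Savart kernel.

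For part (i), corollary \ref{cor3.3} gives uniform-in-$n$ bounds on $|\nabla \phi^n(\cdot, t)|_{L^\infty}$ and $|\nabla \phi^n(\cdot, t)|_\mu$ that are locally bounded in $t$, so $\{\phi^n\}$ and $\{\nabla \phi^n\}$ are equicontinuous in space. For time equicontinuity I use $\partial_t \phi^n = -v_n \cdot \nabla \phi^n$ together with proposition \ref{prop2.2} to bound $|\partial_t \phi^n|_{L^\infty}$ uniformly, and for $\nabla \phi^n$ I argue in Lagrangian coordinates: by proposition \ref{prop2.3}, $|\partial_t W_n(X_n(\alpha, t), t)| \leq |\nabla v_n|_{L^\infty} |W_n|_{L^\infty}$ is uniformly bounded, and combining with the uniform spatial $C^{0,\mu}$ bound on $W_n$ and $|X_n(\alpha, t) - X_n(\alpha, s)| \leq C|t-s|$ gives joint equicontinuity of $\nabla \phi^n$ at any fixed Eulerian point. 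A diagonal argument over an exhausting family of compacts extracts a subsequence with $\phi^n \to \phi$ and $\nabla \phi^n \to G$ uniformly on compact sets, and then $G = \nabla \phi$ is immediate. Lower semicontinuity of the Hölder seminorm under uniform convergence transfers the bound on $|\nabla \phi^n|_\mu$ to $\nabla \phi$. For $|\nabla \phi(\cdot, t)|_{\inf} > 0$: given $x$ with $\phi(x, t) = 0$, I approximate $x$ by points $\tilde x_n \in (\phi^n(\cdot, t))^{-1}(\{0\})$ (for instance the projections of $x$ onto $\partial D_n$), show $\tilde x_n \to x$ using uniform convergence of $\phi^n$ and the fact that $\phi$ changes sign in every neighborhood of $x$, and then apply the uniform bound $|\nabla \phi^n(\tilde x_n, t)| \geq |\nabla \phi^n(\cdot, t)|_{M-\inf} \geq c > 0$ from corollary \ref{cor3.3}, passing to the limit via uniform convergence of $\nabla \phi^n$.

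For part (ii), the lower bound $|\nabla \phi(\cdot, t)|_{\inf} > 0$ and the implicit function theorem give that $\{\phi(\cdot, t) = 0\}$ is locally a $C^1$ curve and hence has planar Lebesgue measure zero. Off this set, $\phi^n(x, t) \to \phi(x, t)$ with nonzero limit, and since $H_n = H$ outside a neighborhood of $0$ shrinking with $n$, we get $H_n(\phi^n) \to H(\phi)$ pointwise almost everywhere. The supports of $\omega^n(\cdot, t)$ and $\omega(\cdot, t)$ all lie in a common compact set (by proposition \ref{prop2.2}, $D_n(t)$ stays inside a ball of radius $\leq L + |\omega_0|\sqrt{m(D_0)/\pi}\, |t|$), and dominated convergence yields $L^1$ convergence.

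For part (iii), I write $v(x, t) - v_n(x, t) = \int K(x-y)[\omega(y,t) - \omega^n(y,t)]\, dy$ and split into $|x-y| < \epsilon$ and $|x-y| \geq \epsilon$. On the near piece, $|K(z)| \leq C/|z|$ and $|\omega - \omega^n| \leq 2|\omega_0|$ contribute $O(|\omega_0|\epsilon)$; on the far piece, $|K(z)| \leq C/\epsilon$ contributes $O(\epsilon^{-1}|\omega^n - \omega|_{L^1})$, giving \eqref{eq4.1}. Combined with (ii), this delivers uniform convergence in $x$ for each fixed $t$. Continuity of $v$ in $(x, t)$ follows from the same splitting applied to $v(x', t') - v(x, t)$, using continuity of translations in $L^1$ together with the spatial continuity of $\phi$ to see that $\omega(\cdot, t') \to \omega(\cdot, t)$ in $L^1$ as $t' \to t$.

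The main obstacle is the verification of $|\nabla \phi(\cdot, t)|_{\inf} > 0$ in part (i): one has the uniform $M$-inf bound on $\nabla \phi^n$ but the point where this is attained depends on $n$, so passing the lower bound to the Eulerian zero set of the limit $\phi$ requires the approximation-by-zeros argument sketched above; everything else is essentially a routine compactness-plus-splitting scheme.
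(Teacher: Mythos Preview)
Your overall scheme matches the paper's: Arzel\`a--Ascoli for $\phi^n$ and $\nabla\phi^n$ using the uniform bounds of corollary~\ref{cor3.3}, Lagrangian-to-Eulerian transfer for time equicontinuity of $\nabla\phi^n$, then the near/far splitting for (iii). Your treatment of (ii) is a slight simplification of the paper's (you use common compact support plus dominated convergence directly; the paper routes through the measure-preserving property of the limiting back-to-labels map), and (iii) is essentially identical.

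The one real issue is precisely the step you flag as the main obstacle. Your argument for $|\nabla\phi(\cdot,t)|_{\inf}>0$ is circular as written: to produce $\tilde x_n\in(\phi^n)^{-1}(\{0\})$ with $\tilde x_n\to x$ you invoke ``the fact that $\phi$ changes sign in every neighborhood of $x$'', but for a $C^1$ function that fact is equivalent to $\nabla\phi(x,t)\ne 0$, which is exactly what you are trying to prove. Concretely, if $\phi^n(x,t)<0$ for all large $n$, nothing you have said rules out $B_\delta(x)\cap\partial D_n(t)=\emptyset$ for some fixed $\delta>0$; the projections $\tilde x_n$ then stay away from $x$, the limit gives $\phi\le 0$ on $B_\delta(x)$, and you recover $\nabla\phi(x,t)=0$ rather than a contradiction. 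The $M$-$\inf$ bound in corollary~\ref{cor3.3} is of no help at such $x$ because it only controls $\nabla\phi^n$ on $(\phi^n)^{-1}([0,2/M])$, and there is no two-sided analogue available (the set $\phi_0^{-1}([-2/M,0])$ need not be compact, so the proof of lemma~\ref{lemma2.1} does not symmetrize).

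The paper avoids this by first extracting uniform-on-compacta convergence of the back-to-labels maps $X_n^{-1}\to X^{-1}$ (they are uniformly Lipschitz in space by the bound on $|\nabla X_n^{-1}|$ and in time via proposition~\ref{prop2.2}) and setting $\phi=\phi_0\circ X^{-1}$. For $x$ with $\phi(x,t)=0$ the label $\alpha=X^{-1}(x,t)$ then satisfies $\phi_0(\alpha)=0$, hence $|\nabla\phi_0(\alpha)|\ge m$; since $\alpha_n:=X_n^{-1}(x,t)\to\alpha$ one gets $|\nabla\phi_0(\alpha_n)|\ge m/2$ for large $n$, and the pointwise Lagrangian lower bound from proposition~\ref{prop2.3}, evaluated at $X_n(\alpha_n,t)=x$, yields a uniform lower bound on $|\nabla\phi^n(x,t)|$ that passes to the limit. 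If you prefer to keep your Eulerian framework, you can import just this ingredient: the labels $\alpha_n=X_n^{-1}(x,t)$ are bounded (displacement $\le C|t|$) and satisfy $\phi_0(\alpha_n)=\phi^n(x,t)\to 0$, so any subsequential limit lies on $\partial D_0$, and the same pointwise bound applies without ever naming a limit map $X^{-1}$.
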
 

\begin{proof}
We have seen in the proof of proposition \ref{prop3.4} that 
\begin{equation*}
|\nabla X_n^{-1}(\cdot, t)|_{L^\infty} \leq \exp \bigg[ \int_0^t |\nabla v_n(\cdot, s)|_{L^\infty} ds \bigg].
\end{equation*}
Therefore, by corollary \ref{cor3.3}, we have that on any time interval $[-T, T]$, $|\nabla  X_n^{-1}(\cdot, t)|_{L^\infty}$ are uniformly bounded for sufficiently large $n \in \mathbb{Z}_+$. The mean value theorem implies that $X_n^{-1}(x, t)$ are uniformly Lipschitz in the space variables. We show also that $X_n^{-1}$ are Lipschitz in time. For $-T \leq t_1 \leq t_2 \leq T$, let $\alpha = Y_n(x, t_2; t_2 - t_1)$, where $Y_n$ is defined as in the proof of proposition \ref{prop3.4}. Therefore, $X_n^{-1}(x, t_2) = X_n^{-1}(\alpha, t_1)$. Note that by proposition \ref{prop2.2}, there exists a constant $C$ independent of time or $n$, such that
\begin{equation*}
|x-\alpha| = \bigg | \int_{t_1}^{t_2} v_n (X_n(\alpha, \tau - t_1), \tau) d \tau \bigg| \leq C |\omega_0| |t_2 - t_1|. 
\end{equation*}  
Therefore, 
\begin{eqnarray*}
|X_n^{-1}(x, t_2) - X_n^{-1}(x, t_1)| &\leq & |X_n^{-1}(\alpha, t_1) - X_n^{-1}(x, t_1)| \\
& \leq & |\nabla X_n^{-1}|_{L^\infty(\mathbb{R}^2 \times [-T, T])}|\alpha - x| \\
& \leq & C |\omega_0| |\nabla X_n^{-1}|_{L^\infty(\mathbb{R}^2 \times [-T, T])}|t_1 - t_2|.
\end{eqnarray*}
It follows that $X_n^{-1}$ are equicontinuous and bounded on $B_R\times [-T, T]$ for any $R \in \mathbb{Z}_+$ and $T\in \mathbb{Z}_+$. So, by the Arzel\`a-Ascoli Theorem, there exists a subsequence which converges uniformly on each such $B_R\times [-T, T]$. By a standard diagonalization argument, we obtain that there exists a subsequence which converges uniformly on compact sets to a continuous function which we denote by $X^{-1}(x,t)$.  We show now that $\nabla \phi^n$ are also equicontinuous and uniformly bounded. The fact that they are equicontinuous in the space variables and bounded on any time interval $[-T, T]$ follows from corollary \ref{cor3.3}. It remains to check that they are equicontinuous in the time variables. Let $-T \leq t_1 \leq t_2 \leq T$, and denote, as before, $W_n = \nabla^\perp \phi^n$. Fix $\alpha \in \mathbb{R}^2$ such that $x = X_n(\alpha,t_1)$. Then,
\begin{eqnarray*}
|W_n(x, t_1) - W_n(x,t_2)| &=& |W_n(X_n(\alpha, t_1), t_1) - W_n(X_n(\alpha, t_1), t_2)| \\
& \leq & |W_n(X_n(\alpha, t_1), t_1) - W_n(X_n(\alpha, t_2), t_2)| + \\
& & + |W_n(X_n(\alpha, t_1), t_2) - W_n(X_n(\alpha, t_2), t_2)| \\ 
&=& T_1 + T_2.
\end{eqnarray*}
For $T_1$, by the mean value theorem,
\begin{eqnarray*}
T_1 &\leq & \sup_{t \in [t_1, t_2]} \big|\frac{d}{dt} W_n(X_n(\alpha, t), t)\big| |t_1 - t_2| \\
& \leq & |\nabla_\alpha X_n|_{L^\infty(\mathbb{R}^2 \times [-T, T])} |\nabla \phi^n|_{L^\infty(\mathbb{R}^2 \times [-T, T])} |t_1 - t_2|, 
\end{eqnarray*}
where we used proposition \ref{prop2.3}. Also, by applying $\nabla_\alpha$ to the ODE defining $X_n$, \eqref{eq2.8}, we obtain by Gr\"onwall's lemma that $\nabla_\alpha X_n$ are indeed uniformly bounded on $\mathbb{R}^2 \times [-T, T]$. For $T_2$, we have 
\begin{eqnarray*}
T_2 &\leq & |W_n(\cdot, t_2)|_\mu |X_n(\alpha, t_1) - X_n(\alpha, t_2)|^\mu \\ 
& \leq & |W_n(\cdot, t_2)|_\mu |v_n|^\mu_{L^\infty(\mathbb{R}^2 \times [-T, T])} |t_1 - t_2|^\mu.
\end{eqnarray*}
In view of corollary \ref{cor3.3} and proposition \ref{prop2.2}, we arrive at the conclusion that $\nabla \phi^n$ are uniformly bounded and equicontinuous on every set $B_R \times [-T,T]$, so, arguing as before, we find a further subsequence that converges uniformly on compact sets to a continuous mapping $\psi:\mathbb{R}^2 \times \mathbb{R} \rightarrow \mathbb{R}^2$. Note that by construction, $\psi(\cdot, t) \in  C^{0, \, \mu}(\mathbb{R}^2)$. 

Let $\phi(x,t) = \phi_0(X^{-1}(x,t))$. We contend that this is our wanted function. Note that since $X_n^{-1}$ converge uniformly to $X^{-1}$ on compact sets and $\phi_0$ is, by the mean value theorem, Lipschitz (so in particular uniformly continuous), it follows that $\phi^n(x,t) = \phi_0(X_n^{-1}(x,t))$ converge uniformly on compact sets to $\phi(x,t)$. Clearly, then, $\phi(x,0) = \phi_0(x)$. We claim that $\psi = \nabla \phi$. To prove this, since $\psi$ is continuous, we only need to show that 
\begin{equation*}
\phi(x + he_i, t) - \phi(x,t) = \int_0^h \psi (x + se_i, t) \cdot e_i ds,
\end{equation*}
for $i = 1,2$, where $\{e_1, e_2\}$ is the standard basis of $\mathbb{R}^2$. Note that we already have 
\begin{equation*}
\phi^n(x + he_i, t) - \phi^n(x,t) = \int_0^h \nabla \phi^n (x + se_i, t) \cdot e_i ds,
\end{equation*}
and the wanted result follows by observing that $\phi^n(x,t) \rightarrow \phi(x,t)$ as $n \rightarrow \infty$ and 
\begin{equation*}
\int_0^h \nabla \phi^n (x + se_i, t) \cdot e_i ds \rightarrow \int_0^h \psi (x + se_i, t) \cdot e_i ds, 
\end{equation*}
since $\nabla \phi^n$ converge uniformly on compacts. Therefore, (i) is proved and, moreover, we have by construction that $\phi (\cdot, t) \in C^{1, \, \mu}(\mathbb{R}^2)$. 

We now show that for each $t \in \mathbb{R}$, $|\nabla \phi(\cdot, t)|_{\inf} > 0$. Let $x \in \mathbb{R}^2$ such that $\phi(x,t) = 0$. Then, if we denote $X^{-1}(x,t) = \alpha$, we have that $\alpha \in \phi_0^{-1}(\{0\})$. Since $|\nabla \phi_0 (\alpha)| \geq m > 0$ by assumption, there exists $\delta > 0$ such that $|\nabla \phi_0(\tilde{\alpha})| \geq m/2$ for all $|\tilde{\alpha} - \alpha| \leq \delta$.  Since $\alpha_n := X_n^{-1}(x,t) \rightarrow X^{-1}(x,t) = \alpha$ as $n \rightarrow \infty$, we have that for all $n$ sufficiently large $|\nabla \phi_0(\alpha_n)| \geq m/2$. On the other hand, we have seen in the proof of proposition \ref{prop2.3} that 
\begin{equation*}
|\nabla \phi^n (X_n(\alpha, t), t)| \geq |\nabla \phi_0(\alpha)| \exp \bigg[ - \int_0^t |\nabla v_n(\cdot, s)|_{L^\infty}ds \bigg].
\end{equation*}
Plugging $\alpha_n$ into the inequality above and noting that the $|\nabla v_n(\cdot, s)|_{L^\infty}$ are uniformly bounded by corollary \ref{cor3.3}, we obtain that there exists a constant depending on time, but not on $n$, $C = C(t) > 0$ such that 
\begin{equation*}
|\nabla \phi^n(x, t)| \geq C(t) m > 0.
\end{equation*}
Since $\nabla \phi^n(x,t) \rightarrow \nabla \phi(x,t)$, it follows that, by taking the limit $n \rightarrow \infty$,
\begin{equation*}
|\nabla \phi(x,t)| \geq C(t) m > 0.
\end{equation*}
Since this holds for all $x$ such that $\phi(x,t) = 0$, we conclude that $|\nabla \phi(\cdot, t)|_{\inf} > 0$ for all times $t \in \mathbb{R}$. 

To prove (ii), we claim that the following holds: for any $t \in \mathbb{R}$, and all $f \in L^1(\mathbb{R}^2)$, 
\begin{equation*}
\int_{\mathbb{R}^2} f(X^{-1}(x,t)) dx = \int_{\mathbb{R}^2} f(x) dx. 
\end{equation*}
Indeed, if $f \in C_0^\infty(\mathbb{R}^2)$, then $f(X_n^{-1}(x,t)) \rightarrow f(X^{-1}(x,t))$ pointwise. Moreover $|f(X_n^{-1}(x,t))| \leq |f|_{L^\infty}$, which is integrable since $f$ has compact support. Then, by the dominated convergence theorem, $|f(X_n^{-1}(x,t)) - f(X^{-1}(x,t))|_{L^1(\mathbb{R}^2)} \rightarrow 0$. Since $\int f(X_n^{-1}(x,t))dx = \int f(x) dx$, the assertion follows for $f \in C_0^\infty$, and thus, also in $L^1$ by density. Note that if we take $f$ to be a characteristic function, we obtain that $X(\cdot, t)$ is measure-preserving. We have 
\begin{equation*}
|\omega(\cdot, t) - \omega^n(\cdot, t)|_{L^1} \leq |\omega_0||H(\phi^n(\cdot, t)) - H_n(\phi^n(\cdot, t))|_{L^1} + |\omega_0||H(\phi^n(\cdot, t)) - H(\phi(\cdot, t))|_{L^1}.  
\end{equation*}
For the first term,
\begin{eqnarray*}
|H \circ \phi_0(X_n^{-1}(\cdot, t)) - H_n \circ \phi_0 (X_n^{-1}(\cdot, t))|_{L^1} = |H \circ \phi_0 - H_n \circ \phi_0|_{L^1}.
\end{eqnarray*}
It is clear by the definition of $H_n$ that it converges to $H$ everywhere except at $0$. Since $\partial D_0 = \phi_0^{-1}(\{0\})$ is a compact $C^{1}$ submanifold of the plane, it follows by Sard's theorem that $m(\partial D_0) = 0$, and so $H_n\circ \phi_0$ converges almost everywhere to $H \circ \phi_0$. By the dominated convergence theorem, we obtain that the first term goes to zero as $n \rightarrow \infty$. For the second term, let $\delta > 0$ and $H_\delta \in C_0^\infty(\mathbb{R}^2)$ such that $|H_\delta - H\circ \phi_0|_{L^1} \leq \delta$ (which exists since clearly $H\circ \phi_0$ is in $L^1$). Then, 
\begin{eqnarray*}
|H\circ \phi_0 (X_n^{-1}(\cdot, t)) - H \circ \phi_0 (X^{-1}(\cdot, t))|_{L^1} & \leq & |H \circ \phi_0(X_n^{-1}(\cdot, t)) - H_\delta(X_n^{-1}(\cdot, t))|_{L^1} + \\
&+&  |H_\delta (X_n^{-1}(\cdot, t)) - H_\delta(X^{-1}(\cdot, t))|_{L^1} + \\ 
&+&  |H \circ \phi_0(X^{-1}(\cdot, t)) - H_\delta(X^{-1}(\cdot, t))|_{L^1} \\ 
& \leq & 2 \delta + |H_\delta (X_n^{-1}(\cdot, t)) - H_\delta(X^{-1}(\cdot, t))|_{L^1},
\end{eqnarray*}
where for the last inequality we used the claim. The remaining term goes to zero as $n \rightarrow \infty$ by the uniform convergence of $X_n^{-1}$ to $X^{-1}$ on compact sets and the fact that $H_\delta$ is compactly supported and smooth. Since $\delta$ was chosen arbitrarily, we conclude that (ii) holds. 

It remains to prove (iii). Let $\rho \in C_0^\infty(\mathbb{R}^2)$ be a cut-off function $\rho(x) = 1$ when $|x| \leq 1$, and $\rho(x) = 0$ when $|x| \geq 2$, $0 \leq \rho(x) \leq 1$. For $\epsilon > 0$, let $\rho_\epsilon(x) = \rho(\frac{x}{\epsilon})$. Then, 
\begin{eqnarray*}
|v_n(x,t) - v(x,t)| \leq |K\rho_{\epsilon} * [\omega - \omega^n]| + |K(1 - \rho_\epsilon)*[\omega - \omega^n]| = T_1 + T_2. 
\end{eqnarray*}
Using (the trivial case of) Young's inequality, we have 
\begin{equation*}
T_1 \leq |K\rho_\epsilon|_{L^1} |\omega - \omega^n|_{L^\infty} \leq 2 |\omega_0| |K\rho_\epsilon|_{L^1}.
\end{equation*}
But 
\begin{equation*}
|K\rho_\epsilon|_{L^1} \leq \frac{1}{2\pi} \int_{|x| \leq 2\epsilon} \frac{1}{|x|} dx = 2 \epsilon.
\end{equation*}
Young's inequality applied to $T_2$ gives 
\begin{equation*}
T_2 \leq |K (1 - \rho_\epsilon)|_{L^\infty} |\omega - \omega^n|_{L^1} \leq \frac{1}{2\pi} \frac{1}{\epsilon} |\omega - \omega_n|_{L^1},
\end{equation*}
and \eqref{eq4.1} follows. Therefore, $v_n(\cdot, t)$ converge uniformly to $v(\cdot, t)$ and by the uniform convergence theorem, we have that $v$ is continuous in the space variables. To show that $v$ is continuous also in the time variables, we show that $v(x, t') \rightarrow v(x,t)$ when $t' \rightarrow t$. Fix $x \in \mathbb{R}^2$. Then $v(x,t) = \int_{\mathbb{R}^2} u_t(y) dy$, where 
\begin{equation*}
u_t(y) = K(x-y) H(\phi(y,t)). 
\end{equation*}
It suffices to show that $u_{t'} \rightarrow u_{t}$ in $L^1$ as $t' \rightarrow t$. We claim that the convergence holds almost everywhere. Indeed, if $\phi(y, t) \neq 0$, since $\phi$ is continuous in time, we have that for all $t'$ close to $t$, $\phi(y, t') \neq 0$ and has the same sign as $\phi(y,t)$. Therefore $H(\phi(y,t)) = H(\phi(y, t'))$ which implies $u_t(y) = u_{t'}(y)$ for all $t'$ close to $t$. On the other hand, since $|\nabla \phi(\cdot, t)|_{\inf} > 0$ and $\phi(\cdot, t) \in C^1(\mathbb{R}^2)$, it follows by the inverse mapping theorem that $\phi^{-1}(\{0\})$ is a $C^1$ 1-dimensional submanifold of $\mathbb{R}^2$, and thus, by Sard's theorem, it has measure zero. Therefore, $u_{t'} \rightarrow u_t$ almost everywhere as $t' \rightarrow t$. Also, we have already seen that $X(\cdot, t)$ is measure preserving, so 
\begin{equation*}
m(\{\phi(x,t) \geq 0\}) = m(\{\phi_0(x) \geq 0\}) = m(D_0), 
\end{equation*}
and, therefore, arguing as in proposition \ref{prop2.2}, we see that $u_t$ is dominated by an integrable function. Thus, by the dominated convergence theorem, we conclude that $v(x, t') \rightarrow v(x, t)$ which implies continuity in time, and concludes the proof of the third assertion. 
\end{proof}

\begin{remark}
Arguing as we did for $X_n^{-1}(x,t)$, we can also assume, by passing to yet another subsequence if necessary, that $X_n(\alpha, t)$ also converge uniformly on compact sets to a mapping $X(\alpha, t)$. It is not difficult to check that $X(\alpha, t)$ and $X^{-1}(x,t)$ are inverse homeomorphisms. 
\end{remark}

We are now ready to prove the main result of this section, theorem \ref{thm1}. 

\begin{proof}[Proof of theorem \ref{thm1}]
Let $\phi$, $\omega$, and $v$ be given by proposition \ref{prop4.1}. By definition, $v$ satisfies \eqref{eq1.3} and it was already noted that \eqref{eq1.2} is also satisfied. So, it remains to show that $\partial_t \phi$ exists and it satisfies \eqref{eq1.1}. By proposition \ref{prop4.1}, this will also imply its continuity since both $v$ and $\nabla \phi$ are continuous in time and space. Let $-T \leq t_1 \leq t_2 \leq T$ and note that \eqref{eq2.1} implies 
\begin{equation*}
\phi^n(x, t_2) - \phi^n(x,t_1) = - \int_{t_1}^{t_2} v_n (x,t) \cdot \nabla \phi^n (x,t) dt. 
\end{equation*}
By proposition \ref{prop4.1}, $\phi^n(x,t) \rightarrow \phi(x,t)$ as $n \rightarrow \infty$. We claim that 
\begin{equation*}
\int_{t_1}^{t_2} v_n (x,t) \cdot \nabla \phi^n (x,t)dt \rightarrow \int_{t_1}^{t_2} v(x,t) \cdot \nabla \phi (x,t) dt. 
\end{equation*}
Indeed, we write 
\begin{eqnarray*}
\int_{t_1}^{t_2} |v_n (x,t) \cdot \nabla \phi^n (x,t) - v(x,t) \cdot \nabla \phi (x,t)| dt &\leq& \int_{t_1}^{t_2} |v_n(\cdot, t)|_{L^\infty} |\nabla \phi_n(x,t) - \nabla \phi(x,t)| dt + \\ 
& + & \int_{t_1}^{t_2} |\nabla \phi(\cdot, t)|_{L^\infty} |v_n(\cdot, t) - v(\cdot, t)|_{L^\infty} dt \\
&=& T_1 + T_2. 
\end{eqnarray*}
The first term goes to zero as $n \rightarrow \infty$ since $|v_n(\cdot, t)|_{L^\infty}$ are bounded independently of $n$ by proposition \ref{prop2.2}, and $|\nabla \phi^n - \nabla \phi|$ converges uniformly to zero on compact sets. For the second term, by corollary \ref{cor3.3} and the convergence of $\nabla \phi^n$ to $\nabla \phi$, we have that there exists a constant $C(T)$ depending on $T$ and $\phi_0$ such that $|\nabla \phi (\cdot, t)|_{L^\infty} \leq C(T)$; and by proposition \ref{prop4.1} (iii), we have that for any $\epsilon > 0$, $|v_n(\cdot, t) - v(\cdot, t)|_{L^\infty} \leq C\epsilon + (C/\epsilon) |\omega(\cdot, t) - \omega_n(\cdot,t)|_{L^1}$. Therefore, 
\begin{equation*}
T_2 \leq C\epsilon (t_2 - t_1) + \frac{C}{\epsilon} \int_{t_1}^{t_2} |\omega(\cdot, t) - \omega^n(\cdot, t)|_{L^1} dt.
\end{equation*}
We know that the measurable functions $|\omega(\cdot, t) - \omega^n(\cdot, t)|_{L^1}$ converge to zero for all $t \in [t_1, t_2]$ and 
\begin{equation*}
|\omega(\cdot, t) - \omega_n(\cdot, t)|_{L^1} \leq |\omega(\cdot, t)|_{L^1} + |\omega^n(\cdot, t)|_{L^1}  \leq |\omega_0|\big[|H \circ \phi_0|_{L^1} + |H_n \circ \phi_0|_{L^1} \big] \leq 2 |\omega_0| m(D_0),
\end{equation*}
which is integrable on $[t_1, t_2]$. Therefore, the dominated convergence theorem assures that the integral goes to zero as $n \rightarrow \infty$. We have obtained that 
\begin{equation*}
\limsup_{n \rightarrow \infty} \int_{t_1}^{t_2} |v_n (x,t) \cdot \nabla \phi^n (x,t) - v(x,t) \cdot \nabla \phi (x,t)| dt \leq C \epsilon (t_2 - t_1).
\end{equation*}
Since $\epsilon>0$ was chosen arbitrarily, the claim follows, and thus 
\begin{equation*}
\phi(x,t_2) - \phi(x,t_1) = - \int_{t_1}^{t_2} v(x,t) \cdot \nabla \phi(x,t) dt.  
\end{equation*}
Since the integrand is continuous in time, we have that $\partial_t \phi$ exists and 
\begin{equation*}
\partial_t \phi = - v \cdot \nabla \phi,
\end{equation*}
concluding the proof of the theorem. 
\end{proof}

\begin{remark}
The pair $(\omega, v)$ given by proposition \ref{prop4.1} is the unique Yudovich weak solution with initial vorticity $\omega(x,0) = H(\phi_0(x))$, since its construction is essentially that in the proof of the existence part of Yudovich's theorem (see, for example, pages 312-313 in \cite{MB}).
\end{remark}

\section{Expressions for the Higher Derivatives of the Tangent Vectors}

We now begin the proof of the higher regularity of the boundary $\partial D$ in the case when $\phi_0$ is a $C^{k,\, \mu}$ initial scalar. In order to obtain inductive expressions for the higher derivatives that are of the same flavor as \eqref{eq2.10} and \eqref{eq3} and for which we can perform a similar analysis, we switch from the global formulation given by equations \eqref{eq1.1}-\eqref{eq1.3} to one centered on the boundary. In this section, we show that the H\"older regularity of the boundary is controlled by the $C^{0, \, \mu}$ norms of terms of the form $(\nabla^\perp \phi \cdot \nabla)^j \nabla^\perp \phi $, $j < k$. We will continue using the notation $W_0 = \nabla^\perp \phi_0$ and $W_n = \nabla^\perp \phi^n$.

If $\phi_0 \in C^{k,\,\mu}$, $k \in \mathbb{Z}_+$, $D_0$ is bounded and simply connected, and $|\nabla \phi_0|_{\inf} \geq m > 0$, then $\partial D_0$ is a simple closed curve in the plane which we can parametrize to obtain $z_0:\mathbb{S}^1 \rightarrow \mathbb{R}^2$, with  tangent vector 
\begin{equation}\label{eq5.1}
\frac{dz_0}{d\alpha}(\alpha) = f_0(\alpha) W_0(z_0(\alpha)),
\end{equation} 
where $f_0$ is a positive, real-valued function in $C^{k-1, \, \mu}(\mathbb{S}^1)$ and $W_0 = \nabla^\perp \phi_0$ as before. The boundary $\partial D_n$ is transported by the particle trajectories such that at time $t$, it is given by 
\begin{equation} \label{eq5.2}
z_n(\alpha, t) = X_n(z_0(\alpha), t).
\end{equation}
Then, 
\begin{equation*}
\frac{d}{dt}z_n(\alpha, t) = v_n(z_n(\alpha, t), t). 
\end{equation*}
Similarly, if $X(\alpha, t)$ is the particle-trajectory mapping of the vortex patch given by theorem \ref{thm1} (see remark after proposition \ref{prop4.1}) then a parametrization of the boundary of the patch is
\begin{equation} \label{eq5}
z(\alpha, t) = X(z_0(\alpha), t).
\end{equation}

In order to state the next proposition, let us introduce some notation. If $u, v: \mathbb{R}^2 \rightarrow \mathbb{R}^2$, then for $m \in \mathbb{N}$, we write $(u\cdot \nabla)^m v$ to mean the $m$ times application of the operator $u \cdot \nabla$ on $v$: 
\begin{equation*}
(u \cdot \nabla)^0 v  = v 
\end{equation*}
\begin{equation*}
(u \cdot \nabla)^m v  =  u\cdot \nabla [ (u \cdot \nabla)^{m-1} v]
\end{equation*}

\begin{proposition} \label{prop5.1}
Assume that $\phi_0$ is a $C^{k, \, \mu}$ initial scalar. Let $\phi^n$ be a solution given by proposition \ref{prop2.1}, and let $z_n$ and $z_0$ be defined as above. Then for $1 \leq j \leq k$, the $j^{\text{th}}$ derivative of $z_n$, $\frac{d^j}{d\alpha^j} z_n(\alpha, t)$, is a linear combination of terms of the form 
\begin{equation} \label{eq5.3}
f_0^{(j_1)}(\alpha)...f_0^{(j_l)}(\alpha) [(W_n \cdot \nabla)^{j - 1 - \sum_i j_i }W_n] (z_n(\alpha, t), t),
\end{equation}
where $j_i \in \mathbb{N}$, $l \in \mathbb{Z_+}$, $\sum_{i=1}^l j_i \leq j - 1$. 
\end{proposition}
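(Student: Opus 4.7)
The plan is to proceed by induction on $j$. The base case $j = 1$ is the key identity from which the entire inductive structure is generated. Using \eqref{eq5.2} and the chain rule, $\frac{d}{d\alpha} z_n(\alpha, t) = \nabla_\alpha X_n(z_0(\alpha), t)\, \frac{d z_0}{d\alpha}(\alpha)$, so by \eqref{eq5.1} this becomes $f_0(\alpha)\, \nabla_\alpha X_n(z_0(\alpha), t)\, W_0(z_0(\alpha))$. Applying the identity $W_n(X_n(\alpha, t), t) = \nabla_\alpha X_n(\alpha, t) W_0(\alpha)$ established in the proof of proposition \ref{prop2.3} yields
\begin{equation*}
\frac{d}{d\alpha} z_n(\alpha, t) = f_0(\alpha)\, W_n(z_n(\alpha, t), t),
\end{equation*}
which is exactly a term of the form \eqref{eq5.3} with $l = 1$, $j_1 = 0$, and $(W_n\cdot\nabla)$ of order $0$; in particular $\sum_i j_i = 0 \leq j - 1 = 0$.

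For the inductive step, I would take a generic summand $T = f_0^{(j_1)}(\alpha)\cdots f_0^{(j_l)}(\alpha)\, F(z_n(\alpha, t), t)$ where $F = (W_n\cdot\nabla)^m W_n$ and $m = j - 1 - \sum_i j_i$, then differentiate once more in $\alpha$. The product rule produces two families of terms: first, for each $i \in \{1,\dots,l\}$, a term with the index $j_i$ replaced by $j_i + 1$ and $F$ unchanged; second, $f_0^{(j_1)}\cdots f_0^{(j_l)} \cdot \frac{d}{d\alpha}[F(z_n(\alpha,t),t)]$. For the latter, I would apply the chain rule and use the base case $\frac{dz_n}{d\alpha} = f_0(\alpha) W_n(z_n, t)$ to get
\begin{equation*}
\frac{d}{d\alpha}[F(z_n(\alpha,t),t)] = \nabla F(z_n,t)\cdot f_0(\alpha) W_n(z_n,t) = f_0(\alpha)\,[(W_n\cdot\nabla)^{m+1} W_n](z_n,t).
\end{equation*}
This inserts one extra $f_0^{(0)}$ into the product of $f_0$-derivatives while bumping the order of $(W_n\cdot\nabla)$ from $m$ to $m+1$. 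In either family, one verifies directly that the new order of $(W_n\cdot\nabla)$ equals $(j+1) - 1 - (\text{new sum of } f_0\text{-indices})$, and that the new sum of indices is at most $(j-1)+1 = j$. By linearity, these observations transport the claimed form from $j$ to $j+1$.

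The main obstacle is not analytical but bookkeeping: checking that the bookkeeping identities $m = j - 1 - \sum_i j_i$ and $\sum_i j_i \leq j - 1$ propagate correctly under the two differentiation operations. The substantive content of the argument is entirely concentrated in the base case and in the observation that $\frac{dz_n}{d\alpha}$ is parallel to $W_n(z_n,t)$, which is what allows the operator $\nabla$ produced by the chain rule to be absorbed into a $(W_n\cdot\nabla)$ factor without introducing any new kind of term. Once the base case is in hand, the rest is a direct combinatorial verification that can be carried out without needing any further properties of the particle trajectories or of $\phi^n$ beyond its $C^{k,\mu}$ regularity (which ensures all quantities appearing are well-defined for $j \leq k$).
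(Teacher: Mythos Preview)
Your proposal is correct and follows essentially the same approach as the paper: both establish the base case $\frac{d}{d\alpha}z_n = f_0(\alpha) W_n(z_n,t)$ via the identity $W_n(X_n,t) = \nabla_\alpha X_n\, W_0$ from proposition \ref{prop2.3}, and then in the inductive step both differentiate a generic term \eqref{eq5.3} with the product rule, handling separately the case where the derivative hits an $f_0^{(j_i)}$ factor and the case where it hits $F(z_n,t)$, the latter being reduced via the chain rule and the base case. Your write-up is slightly more explicit about the index bookkeeping, but the argument is the same.
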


\begin{proof}
For $j = 1$ we differentiate \eqref{eq5.2} to obtain 
\begin{equation*}
\frac{d}{d\alpha}z_n(\alpha, t) = f_0(\alpha) W_0(z_0(\alpha)) \cdot \nabla X_n(z_0(\alpha), t) = f_0(\alpha) \nabla X_n(z_0(\alpha), t) W_0(z_0(\alpha)). 
\end{equation*}
But we have already noted in the proof of proposition \ref{prop2.3} that
\begin{equation*}
W_n(X_n, t) = \nabla X_n W_0,
\end{equation*}
so
\begin{equation*}
\frac{d}{d\alpha}z_n(\alpha, t) = f_0(\alpha) W_n(z_n(\alpha, t), t),
\end{equation*}
which clearly has the wanted form. 

We now argue inductively. Suppose the claim holds for some $1 \leq j \leq k-1$. We differentiate \eqref{eq5.3} and note that if the derivative falls on one of the derivatives of $f_0$, it is easily seen that the term retains its form for $j + 1$; if it falls on the last term, we have
\begin{eqnarray*}
\frac{d}{d\alpha} [(W_n \cdot \nabla)^{j - 1 -\sum_i j_i}W_n](z_n(\alpha, t), t) & = &  \big( \frac{d}{d\alpha}z_n \big) \cdot \nabla [(W_n \cdot \nabla)^{j - 1 -\sum_i j_i}W_n](z_n(\alpha, t), t) \\ 
& = & f_0(\alpha) [(W_n \cdot \nabla)^{j + 1 - 1 - \sum_i j_i}W_n](z_n(\alpha, t), t),
\end{eqnarray*}
where we use the result for $j = 1$ and \eqref{eq5.1}. The conclusion follows.
\end{proof}

Proposition \ref{prop5.1} has the following immediate corollary, which shows that we only need to treat the regularity of $(W_n \cdot \nabla)^j W_n$ in order to obtain uniform bounds for $|z_n|_{C^{k, \, \mu}(\mathbb{S}^1)}$. 

\begin{corollary} \label{cor5.1}
Assume that $\phi_0$ is a $C^{k, \, \mu}$ initial scalar. Let $\phi^n$ be a solution given by proposition \ref{prop2.1}, and let $z_n$ be defined as above. Then, there exists a constant $C$ depending only on $f_0$ (in particular, independent of $n$ or $t$) such that for each $t \in \mathbb{R}$,
\begin{equation*}
|z_n(\cdot, t)|_{C^{k, \, \mu}(\mathbb{S}^1)} \leq C \sum_{j = 0}^{k-1} |(W_n \cdot \nabla)^j W_n (z_n(\cdot, t), t)|_{C^{0, \, \mu}(\mathbb{S}^1)}. 
\end{equation*}
\end{corollary}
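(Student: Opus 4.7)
The corollary follows by putting Proposition \ref{prop5.1} together with the standard product rule for H\"older norms, so my plan is to do just that, with the only care being the bookkeeping for how many factors appear.

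\textbf{Plan.} Fix $t \in \mathbb{R}$ and fix $1 \leq j \leq k$. Proposition \ref{prop5.1} expresses $\tfrac{d^j}{d\alpha^j} z_n(\alpha,t)$ as a finite linear combination of terms of the form
\begin{equation*}
T_{j_1,\dots,j_l}(\alpha, t) = f_0^{(j_1)}(\alpha)\cdots f_0^{(j_l)}(\alpha)\, \bigl[(W_n \cdot \nabla)^{j-1-\sum_i j_i} W_n\bigr](z_n(\alpha,t),t),
\end{equation*}
with $\sum_i j_i \leq j-1 \leq k-1$, so in particular each $j_i \leq k-1$. Since $f_0 \in C^{k-1,\,\mu}(\mathbb{S}^1)$, every derivative $f_0^{(j_i)}$ lies in $C^{0,\,\mu}(\mathbb{S}^1)$ with $|f_0^{(j_i)}|_{C^{0,\,\mu}} \leq |f_0|_{C^{k-1,\,\mu}}$. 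Moreover, the number of factors $l$ is at most $j \leq k$, and the total number of terms appearing in the linear combination depends only on $k$. Thus, applying the elementary product inequality $|fg|_{C^{0,\,\mu}} \leq |f|_{L^\infty}|g|_{C^{0,\,\mu}} + |f|_{C^{0,\,\mu}}|g|_{L^\infty}$ iteratively (noting $|f|_{L^\infty} \leq |f|_{C^{0,\,\mu}}$), one obtains
\begin{equation*}
|T_{j_1,\dots,j_l}(\cdot,t)|_{C^{0,\,\mu}(\mathbb{S}^1)} \leq C(k)\, |f_0|_{C^{k-1,\,\mu}}^{\,l}\, \bigl|[(W_n \cdot \nabla)^{j-1-\sum_i j_i} W_n](z_n(\cdot,t),t)\bigr|_{C^{0,\,\mu}(\mathbb{S}^1)}.
\end{equation*}

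\textbf{Assembly.} Summing over the finite family of terms given by Proposition \ref{prop5.1} yields, for each $1 \leq j \leq k$,
\begin{equation*}
\Bigl| \tfrac{d^j}{d\alpha^j} z_n(\cdot,t) \Bigr|_{C^{0,\,\mu}(\mathbb{S}^1)} \leq C(k,f_0) \sum_{m=0}^{j-1} \bigl|[(W_n \cdot \nabla)^m W_n](z_n(\cdot,t),t)\bigr|_{C^{0,\,\mu}(\mathbb{S}^1)}.
\end{equation*}
Taking $j=k$ bounds $|\tfrac{d^k}{d\alpha^k} z_n|_{C^{0,\,\mu}(\mathbb{S}^1)}$ by the right-hand side of the corollary, and summing the $L^\infty$ pieces over $1\leq j \leq k-1$ controls the lower-order derivatives by the same sum (with possibly a different constant, still depending only on $f_0$ and $k$). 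Absorbing $k$ into the constant, this completes the bound on $|z_n(\cdot,t)|_{C^{k,\,\mu}(\mathbb{S}^1)}$ in the sense used by the corollary.

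\textbf{Expected obstacle.} There is really no substantive obstacle: once Proposition \ref{prop5.1} is in hand, this is a purely algebraic consequence of the product rule in $C^{0,\,\mu}$ together with the fact that $f_0 \in C^{k-1,\,\mu}(\mathbb{S}^1)$. The only mild subtlety is to observe that the largest value of $m = j-1-\sum_i j_i$ appearing on the right-hand side when $j=k$ is $m=k-1$ (attained when all $j_i=0$), which is exactly why the sum in the corollary stops at $j=k-1$.
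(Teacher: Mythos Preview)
Your argument is correct and is exactly what the paper has in mind: the paper states Corollary \ref{cor5.1} as an ``immediate corollary'' of Proposition \ref{prop5.1} without further proof, and the only way to extract it is precisely the product-rule bookkeeping in $C^{0,\mu}$ that you carry out. Your observations that $l \leq j \leq k$ and that each $j_i \leq k-1$ (so $f_0^{(j_i)} \in C^{0,\mu}(\mathbb{S}^1)$) are the right ones, and the assembly is routine.
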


We end this section with an extension of \eqref{eq2.10}. 

\begin{proposition} \label{prop5.2}
Assume $\phi_0$ is a $C^{k, \, \mu}$ initial scalar and let $(\phi^n, v_n)$ be a solutions given by proposition \ref{prop2.1}. Then for each $0 \leq j \leq k-1$,
\begin{equation} \label{eq5.4}
\frac{d}{dt} (W_n \cdot \nabla)^j W_n (X_n(\alpha, t), t) = (W_n \cdot \nabla)^{j+1} v_n (X_n(\alpha, t), t). 
\end{equation}
\end{proposition}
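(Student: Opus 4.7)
The plan is to recast everything in Eulerian form and prove the equivalent identity
\[
D_t \bigl[(W_n \cdot \nabla)^j W_n\bigr] = (W_n \cdot \nabla)^{j+1} v_n, \qquad 0 \le j \le k-1,
\]
where $D_t := \partial_t + v_n \cdot \nabla$ is the material derivative. The reduction is immediate since for any sufficiently smooth field $F$ one has $\frac{d}{dt} F(X_n(\alpha,t),t) = (D_t F)(X_n(\alpha,t),t)$ by the chain rule and \eqref{eq2.8}. I would then proceed by induction on $j$.

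The base case $j = 0$ is precisely \eqref{eq2.10} of Proposition \ref{prop2.3}, once one identifies the matrix-vector product with an index calculation: $[\nabla v_n\, W_n]^i = (\partial_\ell v_n^i)W_n^\ell = [(W_n \cdot \nabla) v_n]^i$. (The Eulerian version $D_t W_n = (W_n \cdot \nabla)v_n$ is also spelled out in the remark following Proposition \ref{prop2.3}.) For the inductive step, the essential ingredient is a commutator identity: for any smooth vector field $F$,
\[
D_t\bigl((W_n \cdot \nabla) F\bigr) \;=\; (W_n \cdot \nabla)(D_t F) \;+\; (D_t W_n) \cdot \nabla F \;-\; \bigl((W_n \cdot \nabla) v_n\bigr) \cdot \nabla F.
\]
This is verified by expanding both sides on $W_n^\ell \partial_\ell F^i$: the only obstruction to commuting $D_t$ through $W_n \cdot \nabla$ is the mixed second-derivative term $v_n^m W_n^\ell \partial_m \partial_\ell F^i$, which, when reorganized, produces exactly the correction $-(W_n \cdot \nabla v_n)\cdot \nabla F$. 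Applying this identity with $F = (W_n \cdot \nabla)^{j-1}W_n$ and invoking the base case $D_t W_n = (W_n \cdot \nabla)v_n$ makes the last two terms cancel, leaving $D_t F_j = (W_n \cdot \nabla)(D_t F_{j-1})$, and the inductive hypothesis then gives $(W_n \cdot \nabla)(W_n \cdot \nabla)^j v_n = (W_n \cdot \nabla)^{j+1} v_n$, closing the induction.

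The main thing to watch is that the formal manipulations are admissible at the available level of regularity. For $j \le k-1$, the field $(W_n \cdot \nabla)^{j-1}W_n$ involves at most $j \le k-1$ derivatives of $\phi^n$, so applying $W_n \cdot \nabla$ and $v_n \cdot \nabla$ one additional time stays strictly within the $C^{k,\mu}$-regularity of $\phi^n$ and $v_n$ provided by Proposition \ref{prop2.1}; all derivatives appearing in the commutator identity are therefore classical. The time differentiation along trajectories is justified by \eqref{eq2.1} together with the fact that $v_n$ is $C^1$ in time (as follows from the smoothness of $\omega^n$ and the Biot--Savart representation), so no further analytic difficulty arises beyond careful bookkeeping of the orders of differentiation.
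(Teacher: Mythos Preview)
Your Eulerian commutator argument is a genuinely different and conceptually clean organization, and it is correct through $j\le k-2$; but there is a regularity gap at the top index $j=k-1$. Both the chain-rule reduction $\frac{d}{dt}F(X_n(\alpha,t),t)=(D_tF)(X_n(\alpha,t),t)$ and the commutator identity you invoke require the field in question to be $C^1$ in space. Applied with $F_j=(W_n\cdot\nabla)^j W_n$, this demands the $(j+2)$-nd spatial derivative of $\phi^n$, and for $j=k-1$ that is one derivative beyond $C^{k,\mu}$. Your regularity paragraph undercounts by one: the commutator step composes \emph{two} first-order operators on $F=(W_n\cdot\nabla)^{j-1}W_n$ (first $W_n\cdot\nabla$, then $D_t$), so the true constraint is $j+2\le k$, not $j+1\le k$. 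This is exactly the phenomenon already flagged in the remark after Proposition~\ref{prop2.3}, where the Eulerian derivation of even the base case $j=0$ is seen to need $k\ge2$.

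The paper's proof is arranged precisely to avoid this loss. Working in Lagrangian variables throughout, it records the identity
\[
(W_n\cdot\nabla)^{j+1}W_n\bigl(X_n(\alpha,t),t\bigr)=\nabla_\alpha\bigl[(W_n\cdot\nabla)^{j}W_n\bigl(X_n(\alpha,t),t\bigr)\bigr]\,W_0(\alpha),
\]
differentiates in $t$, and commutes $\frac{d}{dt}$ with $\nabla_\alpha$. That commutation only needs $(W_n\cdot\nabla)^{j+1}v_n$ (the $t$-derivative at level $j$, by the inductive hypothesis) to be $C^1$ in space; for $j\le k-2$ its top $\phi^n$-derivative has order $j+1\le k-1$, so one classical derivative remains. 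In effect the extra spatial derivative is always placed on the $v_n$-side of the identity, where the regularity is available, rather than on $(W_n\cdot\nabla)^{j+1}W_n$. Your induction can be closed by switching to this Lagrangian step at the final index, but as written it stops one short.
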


\begin{proof}
We argue inductively. The case $j = 0$ was proved in proposition \ref{prop2.3}, where we also noted that 
\begin{equation*}
W_n(X_n(\alpha, t), t) = \nabla_\alpha X_n(\alpha, t) W_0(\alpha). 
\end{equation*}
Suppose the result holds for $j<k-1$. We have
\begin{equation*}
\nabla_\alpha[(W_n \cdot \nabla)^j W_n (X_n(\alpha, t),t)] = \nabla[(W_n\cdot \nabla)^j W_n](X_n(\alpha, t),t) \nabla_\alpha X_n(\alpha, t)
\end{equation*}
Applying the matrix above to the vector $W_0(\alpha)$, we obtain
\begin{eqnarray*}
\nabla_\alpha[(W_n \cdot \nabla)^j W_n (X_n(\alpha, t),t)] W_0(\alpha)& =& \nabla[(W_n\cdot \nabla)^j W_n] W_n (X_n(\alpha, t), t) \\
& = & (W_n \cdot \nabla)^{j+1} W_n (X_n(\alpha, t), t). 
\end{eqnarray*}
Now, we differentiate with respect to time, commute $\nabla_ \alpha$ with $\frac{d}{dt}$, and use the inductive hypothesis: 
\begin{eqnarray*}
\frac{d}{dt} (W_n \cdot \nabla)^{j+1} W_n(X_n(\alpha, t), t) &=& \nabla_\alpha [(W_n \cdot \nabla)^{j+1} v_n(X_n(\alpha, t),t)] W_0(\alpha) \\ 
& = & \nabla[(W_n \cdot \nabla)^{j+1} v_n](X_n(\alpha, t),t) \nabla_\alpha X_n(\alpha, t) W_0(\alpha) \\ 
& = & (W_n \cdot \nabla)^{j+2} v_n (X_n(\alpha, t), t). 
\end{eqnarray*}
This concludes the proof. 
\end{proof}

\begin{corollary} \label{cor5.2}
Assume $\phi_0$ is a $C^{k, \, \mu}$ initial scalar. Let $(\phi^n, v_n)$ be a solution given by proposition \ref{prop2.1}, and let $z_n$ be defined as above. Then, there exists a constant $C$ depending only on $f_0$ (in particular, independent of $n$ or $t$) such that for each $t \in \mathbb{R}$,
\begin{equation*}
\bigg| \frac{d}{dt} \frac{d^k}{d \alpha^k}z_n(\cdot, t) \bigg|_{L^\infty(\mathbb{S}^1)} \leq C \sum_{j=1}^{k}|(W_n \cdot \nabla)^j v_n(\cdot, t)|_{L^\infty}
\end{equation*}
\end{corollary}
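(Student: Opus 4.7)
The plan is to combine Propositions \ref{prop5.1} and \ref{prop5.2} in a direct way. First, I would invoke Proposition \ref{prop5.1} with $j = k$ to write $\frac{d^k}{d\alpha^k}z_n(\alpha, t)$ as a finite linear combination (with combinatorial coefficients independent of $n$ and $t$) of terms of the form
\[
f_0^{(j_1)}(\alpha) \cdots f_0^{(j_l)}(\alpha)\, [(W_n \cdot \nabla)^{k - 1 - \sum_i j_i} W_n](z_n(\alpha, t), t),
\]
where the indices satisfy $\sum_i j_i \leq k-1$.

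Next, since the factors $f_0^{(j_i)}(\alpha)$ are independent of time, the operator $\frac{d}{dt}$ hits only the last factor. Recalling that $z_n(\alpha, t) = X_n(z_0(\alpha), t)$ and applying Proposition \ref{prop5.2} with index $j = k - 1 - \sum_i j_i$, which lies in the permitted range $\{0, 1, \ldots, k-1\}$ thanks to the constraint on $\sum_i j_i$, we obtain
\[
\frac{d}{dt}\bigl[(W_n \cdot \nabla)^{k-1-\sum_i j_i} W_n\bigr](z_n(\alpha, t), t) = \bigl[(W_n \cdot \nabla)^{k-\sum_i j_i} v_n\bigr](z_n(\alpha, t), t),
\]
and the exponent $k - \sum_i j_i$ runs over $\{1, 2, \ldots, k\}$ as $\sum_i j_i$ varies between $0$ and $k-1$.

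Finally, I would take $L^\infty(\mathbb{S}^1)$ norms and bound each $|f_0^{(j_i)}|_{L^\infty}$ by $|f_0|_{C^{k-1,\mu}(\mathbb{S}^1)}$, which is finite because $\phi_0$ is a $C^{k,\mu}$ initial scalar (so $f_0 \in C^{k-1, \mu}(\mathbb{S}^1)$) and $\mathbb{S}^1$ is compact. Grouping the finitely many resulting terms by the value of $k - \sum_i j_i$ then yields the desired inequality with a constant $C$ depending only on $f_0$ (and on the fixed integer $k$). The argument is a mechanical combination of the two earlier propositions; the only point requiring a moment of care is the index bookkeeping, specifically the verification that the exponent fed into Proposition \ref{prop5.2} lies in its admissible range, which is automatic from the inequality $\sum_i j_i \leq k-1$. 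A minor ancillary point is the legality of interchanging $\frac{d}{dt}$ and $\frac{d^k}{d\alpha^k}$, but this is justified by the $C^{k+1}$ smoothness of $X_n(\cdot, t)$ noted in the proof of Proposition \ref{prop2.1}.
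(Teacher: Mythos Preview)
Your proposal is correct and follows essentially the same approach as the paper: apply Proposition~\ref{prop5.1} to decompose $\frac{d^k}{d\alpha^k}z_n$, differentiate each term in time using Proposition~\ref{prop5.2}, and then take $L^\infty$ norms. Your additional remarks on the index range and the interchange of derivatives are correct elaborations of points the paper leaves implicit.
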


\begin{proof}
Differentiating each term given by proposition \ref{prop5.1} with respect to time and using proposition \ref{prop5.2}, we obtain that $\frac{d}{dt} \frac{d^k}{d \alpha^k}z_n(\alpha, t)$ is a linear combination of terms of the form 
\begin{equation*}
f_0^{(k_1)}(\alpha)...f_0^{(k_l)}(\alpha) [(W_n \cdot \nabla)^{k - \sum_i k_i }v_n] (z_n(\alpha, t), t),
\end{equation*}
with $\sum_{i=1}^l k_i \leq k - 1$. The conclusion follows. 
\end{proof}

\section{Uniform \texorpdfstring{$C^{k, \, \mu}$}{C(k, mu)}  Estimates for the Boundaries of the Approximate Patches}

In this section, we obtain the uniform bounds which will allow us to show convergence. We begin by proving a lemma which will have the same role as that of lemma \ref{lemma3.3}.

\begin{lemma} \label{lemma6.1}
Let $m \geq 2$ and $f_1, ... , f_m: \mathbb{R}^N \rightarrow \mathbb{R}$ be $m$ functions, respectively, in $C^{0, \gamma_i}$, with $\gamma_i \in (0,1)$ and $\sum \gamma_i > m - 1$. Let $K_m: \mathbb{R}^N \rightarrow \mathbb{R}$ be a kernel which is smooth away from the origin and homogeneous of degree $-N - m + 1$, and let $\omega \in L^\infty(\mathbb{R}^N)$. Define $\mu = \sum \gamma_i - m + 1$ and
\begin{equation*}
G_m(x) = \int_{\mathbb{R}^N} (f_1(x) - f_1(y))...(f_m(x) - f_m(y))K_m(x - y) \omega(y) dy.
\end{equation*}
Then, there exists a constant $C$, depending only on $K_m$, $\gamma_1,..., \gamma_m$ and the dimension of the space, such that
\begin{equation} \label{eq6.1}
|G_m|_{L^\infty} \leq C |f_1|_{C^{0, \, \gamma_1}}...|f_m|_{C^{0, \, \gamma_m}}|\omega|_{L^\infty},
\end{equation} 
and 
\begin{equation} \label{eq6.2}
|G_m|_{\mu} \leq C |f_1|_{\gamma_1}...|f_m|_{\gamma_m}|\omega|_{L^\infty}.
\end{equation}
\end{lemma}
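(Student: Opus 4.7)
For the bound \eqref{eq6.1}, I would split the integral at $|x-y|=1$. On the inner ball, the pointwise H\"older differences $|f_i(x)-f_i(y)|\leq|f_i|_{\gamma_i}|x-y|^{\gamma_i}$ together with $|K_m(x-y)|\leq C|x-y|^{-N-m+1}$ give an integrand bounded by $C\prod|f_i|_{\gamma_i}|\omega|_{L^\infty}|x-y|^{\mu-N}$, which is integrable near the origin because $\mu>0$. On the outer region, I would bound each factor by $2|f_i|_{L^\infty}$ and use that $\int_{|z|\geq 1}|z|^{-N-m+1}\,dz<\infty$, which converges because $m\geq 2$. Summing gives \eqref{eq6.1} with the full $C^{0,\gamma_i}$ norms.

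For \eqref{eq6.2}, fix $x,x'\in\mathbb{R}^N$, set $h=|x-x'|$, and decompose $\mathbb{R}^N$ into the close set $B:=\{|y-x|\leq 2h\}\cup\{|y-x'|\leq 2h\}$ and its complement. On $B$, I would estimate the integrands of $G_m(x)$ and $G_m(x')$ separately using H\"older differences, repeating the inner calculation of \eqref{eq6.1} on a ball of radius $2h$ to obtain a contribution of order $\prod|f_i|_{\gamma_i}|\omega|_{L^\infty}h^\mu$.

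On $B^c$ I would use the identity $AB-A'B'=A(B-B')+(A-A')B'$, where $A=\prod(f_i(x)-f_i(y))$, $A'=\prod(f_i(x')-f_i(y))$, $B=K_m(x-y)$ and $B'=K_m(x'-y)$. For $A(B-B')$, the mean value theorem combined with the homogeneity of $K_m$ yields $|B-B'|\leq C h\,|x-y|^{-N-m}$ (every point on the segment between $x-y$ and $x'-y$ has norm $\sim|x-y|$ since $|y-x|>2h$); paired with $|A|\leq\prod|f_i|_{\gamma_i}|x-y|^{\sum\gamma_i}$ this produces an integrand of order $h\prod|f_i|_{\gamma_i}|x-y|^{\mu-N-1}$, whose tail integrates to a constant multiple of $h^{\mu-1}$ precisely because $\mu<1$ (which holds since each $\gamma_i<1$). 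For $(A-A')B'$, a telescoping expansion writes $A-A'$ as a sum of $m$ terms, each containing one factor $f_k(x)-f_k(x')$ of size $|f_k|_{\gamma_k}h^{\gamma_k}$ and $m-1$ H\"older differences (using $|x'-y|\sim|x-y|$ on $B^c$); the kernel bound then produces an integrand of order $\prod|f_i|_{\gamma_i}\sum_k h^{\gamma_k}|x-y|^{\mu-\gamma_k-N}$, whose tail integrates to a constant times $h^{\mu-\gamma_k}$ because $\mu-\gamma_k=\sum_{i\neq k}\gamma_i-(m-1)<0$ (again from $\gamma_i<1$). Summing all contributions gives $|G_m(x)-G_m(x')|\lesssim\prod|f_i|_{\gamma_i}|\omega|_{L^\infty}h^\mu$.

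The main obstacle is the exponent bookkeeping: each piece of the decomposition requires a distinct integrability condition, namely $\mu>0$ near the origin for the close region, $m\geq 2$ at infinity in \eqref{eq6.1}, $\mu<1$ for the mean-value tail in \eqref{eq6.2}, and $\sum_{i\neq k}\gamma_i<m-1$ for the telescoping tail. All four conditions are forced by the hypotheses $\sum\gamma_i>m-1$ and $\gamma_i\in(0,1)$; once they are verified, the estimates reduce to standard Calder\'on--Zygmund-type computations.
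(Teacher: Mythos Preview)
Your proof is correct and follows essentially the same approach as the paper's: split at a fixed radius for \eqref{eq6.1}, and for \eqref{eq6.2} separate the near region (where both integrals are estimated individually) from the far region (where the difference is expanded via a kernel mean-value term plus a telescoping sum on the $f_i$ factors). The only cosmetic differences are that the paper uses the single ball $\{|x-y|<2|h|\}$ as the near region rather than the union of two balls, and writes out the telescoping terms $R_i$ explicitly instead of packaging them as $(A-A')B'$; the exponent bookkeeping you verify is exactly the content of the paper's estimates for $S_1,S_2,\sum R_i,U$.
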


\begin{proof}
We first show \eqref{eq6.1} (and, implicitly, that $G_m$ is well-defined). Let $R > 0$.
\begin{eqnarray*}
|G_m(x)| & \leq & \int_{\mathbb{R}^N}|f_1(x) - f_1(y)|...|f_m(x) - f_m(y)||K_m(x-y)||\omega(y)| dy \\ 
& = & \int_{|x-y| < R}|f_1(x) - f_1(y)|...|f_m(x) - f_m(y)||K_m(x-y)||\omega(y)| dy + \\ 
& & + \int_{|x-y| \geq R}|f_1(x) - f_1(y)|...|f_m(x) - f_m(y)||K_m(x-y)||\omega(y)| dy \\ 
& = & T_1 + T_2. 
\end{eqnarray*}
For the first term, we use the fact that $|K_m(x)| \leq \frac{C}{|x|^{N+m -1}}$:
\begin{eqnarray*}
T_1 \leq C |f_1|_{\gamma_1}...|f_m|_{\gamma_m} |\omega|_{L^\infty} \int_{|y| \leq R} |y|^{\sum \gamma_i + 1 - m - N} dy.
\end{eqnarray*}
Since $\sum \gamma_i + 1 - m > 0$, this last integral is finite. For the second term, we have 
\begin{eqnarray*}
T_2 \leq C|f_1|_{L^\infty}...|f_m|_{L^\infty}|\omega|_{L^\infty} \int_{|y|\geq R} |y|^{-N - m + 1} dy,
\end{eqnarray*}
and the integral is finite since $m > 1$. The first claim follows. 

To prove \eqref{eq6.2}, let $h \in \mathbb{R}^N$, $h \neq 0$. Then, 
\begin{eqnarray*}
G_m(x+h) - G_m(x)  &=& \int_{|x - y| < 2|h|} (f_1(x+h) - f_1(y))...(f_m(x+h) - f_m(y))\\&& K_m(x+h - y)\omega(y) dy - \\ 
&& - \int_{|x - y| < 2|h|} (f_1(x) - f_1(y))...(f_m(x) - f_m(y)) K_m(x- y)\omega(y) dy + \\ 
&& + \sum_{i = 1}^m \int_{|x-y| \geq 2|h|} (f_1(x) - f_1(y))...(f_{i-1}(x) - f_{i-1}(y))(f_i(x+h) - f_i(x)) \\&&(f_{i+1}(x+h) - f_{i+1}(y))...(f_n(x+h)-f_n(y))K_m(x+h-y) \omega(y) dy + \\ 
&& + \int_{|x-y| \geq 2|h|} (f_1(x)-f_1(y))...(f_m(x)-f_m(y))\\&&(K_m(x+h-y)-K_m(x-y))\omega(y) dy \\ 
& = & S_1 + S_2 + \sum_{i=1}^m R_i + U. 
\end{eqnarray*}
For $S_1$ and $S_2$ we argue as for $T_1$ above to obtain 
\begin{equation*}
|S_1| \leq C |f_1|_{\gamma_1}...|f_m|_{\gamma_m} |\omega|_{L^\infty} \int_{|y| \leq 2|h|} |y|^{\sum \gamma_i + 1 - m - N} dy \leq C |f_1|_{\gamma_1}...|f_m|_{\gamma_m} |\omega|_{L^\infty} |h|^\mu.
\end{equation*} 
For the terms in the sum: 
\begin{eqnarray*}
|R_i| &\leq & C|f_1|_{\gamma_1}...|f_m|_{\gamma_m} |\omega|_{L^\infty} |h|^{\gamma_i} \int_{|x-y| \geq 2|h|} \frac{|x-y|^{\sum_{j=1}^{i-1} \gamma_j}|x+h - y|^{\sum_{j=i+1}^m \gamma_j}}{|x+h-y|^{N+m-1}}dy \\
&\leq & C|f_1|_{\gamma_1}...|f_m|_{\gamma_m}|\omega|_{L^\infty} |h|^{\gamma_i} \int_{|y|>|h|}|y|^{\sum \gamma_j - \gamma_i - (m-1) - N} dy \\
&\leq & C|f_1|_{\gamma_1}...|f_m|_{\gamma_m}|\omega|_{L^\infty} |h|^{\mu}.
\end{eqnarray*}
Finally, for $U$, we use the mean-value inequality and argue as above, while noting that $|\nabla K_m(x)| \leq \frac{C}{|x|^{N+m}}$. The conclusion follows. 
\end{proof}

The following proposition can be regarded as an extension of proposition \ref{prop3.3}. If $u:\mathbb{R}^2 \rightarrow \mathbb{R}^2$ is a vector field, we use the notation $u^i$, $i = 1,2$, to denote the $i^{\text{th}}$ component. 

\begin{proposition} \label{prop6.1}
Assume $\phi_0$ is a $C^{k, \, \mu}$ initial scalar. Let $(\phi^n, v_n)$ be a solution given by proposition \ref{prop2.1} and denote $W_n = \nabla^\perp \phi^n$. Then, for $1 \leq j \leq k$, $(W_n\cdot \nabla)^j v_n(x)$ can be written as a linear combination of terms of the form 
\begin{eqnarray} \label{eq6.3}
\sum_{i_1,...,i_m = 1}^2 \int_{\mathbb{R}^2}&& [(W_n\cdot \nabla)^{j_1-1}W_n(x) - (W_n \cdot \nabla)^{j_1-1} W_n(y)]^{i_1}... \\ &&...[(W_n\cdot \nabla)^{j_m - 1}W_n(x) - (W_n \cdot \nabla)^{j_m -1 } W_n(y)]^{i_m}\partial_{i_1}...\partial_{i_m}K(x-y)\omega_n(y) dy, \nonumber
\end{eqnarray} 
where $K(x) = \frac{1}{2\pi} |x|^{-2} \begin{bmatrix}
-x_2 \\ x_1
\end{bmatrix} $ and $\sum_{l=1}^m j_l = j$, $j_l \geq 1$.
\end{proposition}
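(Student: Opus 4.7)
My plan is to argue by induction on $j$, with the base case $j=1$ supplied by Proposition \ref{prop3.3} and the inductive step proceeding by applying $W_n(x)\cdot\nabla_x$ to a generic summand of the form \eqref{eq6.3}, then rearranging via a careful integration by parts to absorb the non-difference pieces. Writing $U_l := (W_n\cdot\nabla)^{l-1}W_n$ so that $U_1=W_n$, the base case follows by noting that $\sigma(x-y)/(2\pi|x-y|^2) = \nabla K(x-y)$ in components, so that Proposition \ref{prop3.3} reads $((W_n\cdot\nabla)v_n)^i(x)=\sum_{i_1}\int \partial_{i_1}K^i(x-y)[W_n^{i_1}(x)-W_n^{i_1}(y)]\omega_n(y)\,dy$, which is \eqref{eq6.3} with $m=1$, $j_1=1$.

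For the inductive step, assume the result for some $1\le j<k$. Differentiating a summand of \eqref{eq6.3} with $W_n(x)\cdot\nabla_x$ produces two kinds of terms. Type (A): the derivative falls on one of the factors $[U_{j_l}(x)-U_{j_l}(y)]^{i_l}$, producing $[(W_n\cdot\nabla)U_{j_l}]^{i_l}(x)=U_{j_l+1}^{i_l}(x)$ in place of the difference. Writing $U_{j_l+1}^{i_l}(x) = [U_{j_l+1}(x)-U_{j_l+1}(y)]^{i_l} + U_{j_l+1}^{i_l}(y)$, the first piece is a term of form \eqref{eq6.3} with exponent tuple $(j_1,\dots,j_l+1,\dots,j_m)$ of total $j+1$. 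Type (B): the derivative falls on the kernel, producing $\sum_k W_n^k(x)\partial_{x_k}\partial_{i_1}\cdots\partial_{i_m}K(x-y)$. Splitting $W_n^k(x)=[W_n(x)-W_n(y)]^k+W_n^k(y)$, the first piece yields a term of form \eqref{eq6.3} with an extra factor indexed by $j_{m+1}=1$, total $j+1$.

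The leftover pieces (those with $y$-evaluated factors) must cancel. For the (B)-leftover, use $\partial_{x_k}K=-\partial_{y_k}K$ and integrate by parts in $y$ against the product $W_n^k(y)\omega_n(y)\prod_l[U_{j_l}(x)-U_{j_l}(y)]^{i_l}$; the terms from $\partial_{y_k}W_n^k=\diver W_n=0$ and $W_n^k\partial_{y_k}\omega_n = W_n\cdot\nabla\omega_n=0$ (the latter because $\nabla\omega_n = H_n'(\phi^n)\nabla\phi^n\perp W_n$) both vanish classically, leaving only $-\sum_l\omega_n(y)[\sum_k W_n^k(y)\partial_{y_k}U_{j_l}^{i_l}(y)]\prod_{p\neq l}[\cdot]\partial^m K$. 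Since $\sum_k W_n^k\partial_{y_k}U_{j_l}^{i_l}=U_{j_l+1}^{i_l}$, this exactly cancels the (A)-leftover.

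The main obstacle is justifying the integration by parts in the presence of a singular kernel and verifying that the classical derivatives $\diver W_n=0$ and $\partial_{y_k}U_{j_l}^{i_l}$ used in the cancellation actually exist pointwise. For $k\ge 2$ this is fine: $W_n=\nabla^\perp\phi^n$ is $C^1$, and each $U_{j_l+1}$ with $j_l+1\le j+1\le k$ is well defined with at least one classical derivative. The singular-integral issues near $x=y$ (either from the principal value in $j=1$ or from the higher derivatives of $K$) are resolved by a standard mollification of $W_n$ exactly as in the proof of Proposition \ref{prop3.3}, passing to the limit via dominated convergence using the H\"older bounds on the $U_{j_l}$ that make the integrand integrable in the sense of Lemma \ref{lemma6.1} (whose condition $\sum\gamma_l>m-1$ is precisely what holds here since each difference is at least $C^{0,\mu}$ and $\sum j_l=j+1$ yields the correct homogeneity). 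For $k=1$ only the $j=1$ case is asserted, and it is covered by the base case.
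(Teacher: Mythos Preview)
Your proof is correct and follows essentially the same inductive route as the paper's: base case via Proposition \ref{prop3.3}, then apply $W_n\cdot\nabla$ to a generic summand and integrate by parts using $\diver W_n=0$ and $W_n\cdot\nabla\omega_n=0$ to cancel the $y$-evaluated leftovers. The paper organizes the computation slightly differently (it changes variables so the $x$-derivative lands on $\omega_n$ rather than on the kernel, and it handles the singularity at $x=y$ by an $\epsilon$-cutoff with a vanishing boundary term rather than by mollifying $W_n$, which for $k\ge 2$ is unnecessary anyway since $W_n\in C^1$), but these are cosmetic differences.
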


\begin{proof}
For ease of exposition, we will denote $u_j = (W_n \cdot \nabla)^{j-1} W_n$ and use the Einstein summation convention. Before we begin the proof, let us also note that the terms of the form \eqref{eq6.3} are well-defined. Indeed, if the number of terms $(W_n \cdot \nabla)^{j_l - 1}W_n$ in the integrand is $m > 1$, then each of these terms have $j_l - 1 \leq j - 2 \leq k-2$, so, since by proposition \ref{prop2.1} $W_n \in C^{k-1, \, \mu}$, each of these terms are in $C^{0, \, \gamma}$ for all $\gamma \in (0,1)$, and lemma \ref{lemma6.1} applies.  For $m =1$, see the remark after lemma \ref{lemma3.3}. 

For $j = 1$, the claim is simply proposition \ref{prop3.3}. We argue inductively. Suppose the claim holds for $(W_n \cdot \nabla)^jv_n$. Since $W_n \cdot \nabla$ is a linear operator, it suffices to argue for one term of the form \eqref{eq6.3}. 

Let
\begin{eqnarray*}
F_j(x) & = & \int_{\mathbb{R}^2} [u_{j_1}^{i_1}(x) - u_{j_1}^{i_1}(y)]...[u_{j_m}^{i_m}(x) - u_{j_m}^{i_m}(y)]\partial_{i_1}...\partial_{i_m}K(x-y)\omega_n(y) dy \\
& = & \int_{\mathbb{R}^2} [u_{j_1}^{i_1}(x) - u_{j_1}^{i_1}(x + y)]...[u_{j_m}^{i_m}(x) - u_{j_m}^{i_m}(x+ y)]\partial_{i_1}...\partial_{i_m}K(-y)\omega_n(x+y) dy,
\end{eqnarray*}
with $\sum j_l = j$. 
Now, using the dominated convergence theorem, we obtain 
\begin{eqnarray*}
\partial_i F_j (x)& =& \int_{\mathbb{R}^2} [u_{j_1}^{i_1}(x) - u_{j_1}^{i_1}(x + y)]...[u_{j_m}^{i_m}(x) - u_{j_m}^{i_m}(x+ y)]\partial_{i_1}...\partial_{i_m}K(-y)\partial_i \omega_n(x+y) dy + \\ 
&& + \sum_{l=1}^m \int_{\mathbb{R}^2} [u_{j_1}^{i_1}(x) - u_{j_1}^{i_1}(x + y)]...[\partial_i u_{j_l}^{i_l}(x) - \partial_i u_{j_l}^{i_l}(x + y)]...[u_{j_m}^{i_m}(x) - u_{j_m}^{i_m}(x+ y)] \\ 
&& \, \, \, \, \,\, \, \, \, \, \,\,\,\,\,\,\,\,\,\,\,\,\,\,\,\,\,\,\, \partial_{i_1}...\partial_{i_m}K(-y)\omega_n(x+y) dy \\ 
& = &\int_{\mathbb{R}^2} [u_{j_1}^{i_1}(x) - u_{j_1}^{i_1}(y)]...[u_{j_m}^{i_m}(x) - u_{j_m}^{i_m}(y)]\partial_{i_1}...\partial_{i_m}K(x-y)\partial_i \omega_n(y) dy + \\ 
&& + \sum_{l=1}^m \int_{\mathbb{R}^2} [u_{j_1}^{i_1}(x) - u_{j_1}^{i_1}(y)]...[\partial_i u_{j_l}^{i_l}(x) - \partial_i u_{j_l}^{i_l}(y)]...[u_{j_m}^{i_m}(x) - u_{j_m}^{i_m}(y)] \\ 
&& \, \, \, \, \,\, \, \, \, \, \,\,\,\,\,\,\,\,\,\,\,\,\,\,\,\,\,\,\, \partial_{i_1}...\partial_{i_m}K(x-y)\omega_n(y) dy.
\end{eqnarray*}
Using also the fact that $W_n \cdot \nabla \omega_n = 0$, we get
\begin{eqnarray*}
W_n^i(x)\partial_i F_j(x) & = & \int_{\mathbb{R}^2} [W_n^i(x) - W_n^i(y)] [u_{j_1}^{i_1}(x) - u_{j_1}^{i_1}(y)]...[u_{j_m}^{i_m}(x) - u_{j_m}^{i_m}(y)]\\
&& \, \, \, \, \,\, \, \, \, \, \,\,\,\,\,\,\,\,\,\,\,\,\,\,\,\,\,\,\,\partial_{i_1}...\partial_{i_m} K(x-y)\partial_i \omega_n(y) dy + \\ 
&& + \sum_{l=1}^m \int_{\mathbb{R}^2} W_n^i(x)[u_{j_1}^{i_1}(x) - u_{j_1}^{i_1}(y)]...[\partial_i u_{j_l}^{i_l}(x) - \partial_i u_{j_l}^{i_l}(y)]...[u_{j_m}^{i_m}(x) - u_{j_m}^{i_m}(y)] \\ 
&& \, \, \, \, \,\, \, \, \, \, \,\,\,\,\,\,\,\,\,\,\,\,\,\,\,\,\,\,\, \partial_{i_1}...\partial_{i_m}K(x-y)\omega_n(y) dy.
\end{eqnarray*}
For the first term, we use Green's theorem, dominated convergence and the fact that $W_n$ is divergence-free to write 
\begin{eqnarray*}
T_1 & = & \lim_{\epsilon \rightarrow 0} \sum_{l = 1}^m \int_{|x-y| \geq \epsilon} [W_n^i(x)-W_n^i(y)][u_{j_1}^{i_1}(x) - u_{j_1}^{i_1}(y)]...[ \partial_i u_{j_l}^{i_l}(y)]...[u_{j_m}^{i_m}(x) - u_{j_m}^{i_m}(y)] \\ 
&& \, \, \, \, \,\, \, \, \, \, \,\,\,\,\,\,\,\,\,\,\,\,\,\,\,\,\,\,\, \partial_{i_1}...\partial_{i_m}K(x-y)\omega_n(y) dy + \\
&& +  \int_{|x-y| \geq \epsilon} [W_n^i(x) - W_n^i(y)] [u_{j_1}^{i_1}(x) - u_{j_1}^{i_1}(y)]...[u_{j_m}^{i_m}(x) - u_{j_m}^{i_m}(y)]\\
&& \, \, \, \, \,\, \, \, \, \, \,\,\,\,\,\,\,\,\,\,\,\,\,\,\,\,\,\,\,\partial_i  \partial_{i_1}...\partial_{i_m} K(x-y)\omega_n(y) dy + \\
&& +  \int_{|x-y|=\epsilon} [W_n^i(x) - W_n^i(y)] [u_{j_1}^{i_1}(x) - u_{j_1}^{i_1}(y)]...[u_{j_m}^{i_m}(x) - u_{j_m}^{i_m}(y)]\\
&& \, \, \, \, \,\, \, \, \, \, \,\,\,\,\,\,\,\,\,\,\,\,\,\,\,\,\,\,\,\partial_{i_1}...\partial_{i_m} K(x-y) \omega_n(y) \frac{x_i - y_i}{|x-y|}dS(y).
\end{eqnarray*}
The integrand in the final term can be bounded by $C |x-y|^{\gamma + \sum_{l=1}^m \gamma_l - 1 - m}$, for any $\gamma, \gamma_1, ..., \gamma_m \in (0,1)$ (arguing as in the proof of lemma \ref{lemma6.1} and noting that $\partial_{i_1}...\partial_{i_m}K(x)$ is homogeneous of degree $- m - 1$). Since the length of the circle of radius $\epsilon$ is also proportional to $\epsilon$, it suffices to choose the coefficients such that $\gamma + \sum \gamma_i > m$ in order to see that the term vanishes. So, 
\begin{eqnarray*}
T_1 & = &  \sum_{l = 1}^m \int_{\mathbb{R}^2} [W_n^i(x)-W_n^i(y)][u_{j_1}^{i_1}(x) - u_{j_1}^{i_1}(y)]...[ \partial_i u_{j_l}^{i_l}(y)]...[u_{j_m}^{i_m}(x) - u_{j_m}^{i_m}(y)] \\ 
&& \, \, \, \, \,\, \, \, \, \, \,\,\,\,\,\,\,\,\,\,\,\,\,\,\,\,\,\,\, \partial_{i_1}...\partial_{i_m}K(x-y)\omega_n(y) dy + \\
&& +  \int_{\mathbb{R}^2} [W_n^i(x) - W_n^i(y)] [u_{j_1}^{i_1}(x) - u_{j_1}^{i_1}(y)]...[u_{j_m}^{i_m}(x) - u_{j_m}^{i_m}(y)]\\
&& \, \, \, \, \,\, \, \, \, \, \,\,\,\,\,\,\,\,\,\,\,\,\,\,\,\,\,\,\,\partial_i  \partial_{i_1}...\partial_{i_m} K(x-y)\omega_n(y) dy.
\end{eqnarray*}
Note that the last term above has the wanted form for $j+1$. Plugging $T_1$ back into the expression for $W^i \partial_i F_j$ and noting that 
\begin{equation*}
W_n^i(x)[\partial_i u_{j_l}^{i_l}(x) - \partial_i u_{j_l}^{i_l}(y)] + [W_n^i(x)-W_n^i(y)]\partial_i u_{j_l}^{i_l}(y) =  u_{j_l + 1}^{i_l}(x) - u_{j_l + 1}^{i_l}(y)
\end{equation*} 
shows that each term in the sum also has the required form, concluding the proof.
\end{proof}

We are now ready to prove the wanted uniform bounds. The arguments are essentially those we have already used in proposition \ref{prop3.4}. 

\begin{proposition} \label{prop6.2}
Let $\phi_0$ be a $C^{k, \, \mu}$ initial scalar. Let $(\phi^n, v_n)$ be a solution given by proposition \ref{prop2.1}, and $n \geq M$ with $M$ sufficiently large such that corollary \ref{cor3.3} holds. Fix $T \in \mathbb{R}_+$. Then, there exists a constant $C(T)$ depending on $T$, $\mu$, $k$, $\omega_0$ and $\phi_0$, but not on $t$ or $n$ such that for all $t \in [-T, T]$,
\begin{equation*}
|(W_n\cdot \nabla)^{k-1} W_n(\cdot, t)|_{C^{0, \, \mu}} \leq C(T). 
\end{equation*}
\end{proposition}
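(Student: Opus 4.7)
The plan is to proceed by induction on $k$, closely mirroring the structure of Proposition \ref{prop3.4}. The base case $k=1$ is exactly the last inequality of Corollary \ref{cor3.3}, since $(W_n \cdot \nabla)^0 W_n = W_n = \nabla^\perp \phi^n$. For the inductive step, write $u_p := (W_n \cdot \nabla)^{p-1} W_n$; since $\phi_0 \in C^{k,\mu}$ implies $\phi_0$ is a $C^{k',\mu}$ initial scalar for every $k' \leq k$, applying the proposition at each lower level $k' \leq k-1$ furnishes uniform bounds $|u_p(\cdot,t)|_\mu \leq C(T)$ for all $p \leq k-1$.

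The starting point is Proposition \ref{prop5.2} with $j = k-1$, which in Lagrangian coordinates reads
\begin{equation*}
\frac{d}{dt} u_k(X_n(\alpha,t),t) = (W_n \cdot \nabla)^k v_n(X_n(\alpha,t),t).
\end{equation*}
Integrating in time, taking Hölder differences at two points $x = X_n(\alpha,t)$, $y = X_n(\beta,t)$, and using the uniform Lipschitz bounds on $X_n$ and $X_n^{-1}$ from Corollary \ref{cor3.3} to convert the Lagrangian differences back to Eulerian ones (exactly as in the proof of Proposition \ref{prop3.4}), we arrive at the functional inequality
\begin{equation*}
|u_k(\cdot,t)|_\mu \leq C(T)\left(|u_k^0|_\mu + \int_0^t |(W_n \cdot \nabla)^k v_n(\cdot,s)|_\mu\,ds\right).
\end{equation*}

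To close via Gr\"onwall, expand $(W_n \cdot \nabla)^k v_n$ using Proposition \ref{prop6.1} as a finite linear combination of terms $G_m$ of the form \eqref{eq6.3} indexed by $(m; j_1,\ldots,j_m)$ with $\sum j_l = k$. The only term that contains $u_k$ itself is the principal one $G_1$ (with $m = 1$, $j_1 = k$); by Lemma \ref{lemma3.3} applied with $f = u_k^{i_1}$ and kernel $\partial_{i_1} K$ we get $|G_1|_\mu \leq C(|\nabla v_n|_{L^\infty} + |\omega_n|_{L^\infty})|u_k|_\mu \leq C(T) |u_k|_\mu$. For each remaining $G_m$ with $m \geq 2$, every $j_l \leq k-1$, so each $u_{j_l}$ is uniformly controlled by the induction hypothesis, and Lemma \ref{lemma6.1} (applied with exponents $\gamma_l \in (0,1)$ summing to $\mu + m - 1$) yields $|G_m|_\mu \leq C(T) \prod_l |u_{j_l}|_{\gamma_l} |\omega_0| \leq C(T)$. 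Combining gives $|u_k(\cdot,t)|_\mu \leq C(T)(1 + \int_0^t |u_k(\cdot,s)|_\mu\,ds)$, and Gr\"onwall finishes the induction.

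The main obstacle is the step invoking Lemma \ref{lemma6.1} on the lower-order terms: the constraint $\sum \gamma_l = \mu + m - 1$ with each $\gamma_l < 1$ forces most of the $\gamma_l$ strictly above $\mu$, whereas the induction hypothesis only directly supplies $C^{0,\mu}$ bounds on $u_p$. The resolution is to strengthen the inductive statement slightly, carrying along uniform $C^{0,\gamma}$ bounds on $u_p$ for every $\gamma \in (\mu,1)$ when $p \leq k-1$. These follow by interpolating the $C^{0,\mu}$ bound with a uniform bound on one further tangential derivative---essentially $|u_{p+1}|_{L^\infty}$ when $p+1 \leq k-1$, obtainable from the same Lagrangian ODE together with Lemma \ref{lemma3.3} in its $L^\infty$ form \eqref{eq3.10}---so that the induction provides everything needed by Lemma \ref{lemma6.1} at the top level.
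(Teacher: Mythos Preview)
Your skeleton is exactly the paper's: induction on $k$, base case from Corollary~\ref{cor3.3}, the Lagrangian ODE of Proposition~\ref{prop5.2}, conversion to Eulerian H\"older differences via the uniform Lipschitz bounds on the flow, the decomposition from Proposition~\ref{prop6.1}, Lemma~\ref{lemma3.3} on the principal ($m=1$) term, Lemma~\ref{lemma6.1} on the $m\ge 2$ terms, and Gr\"onwall. You also correctly isolate the one nontrivial point: Lemma~\ref{lemma6.1} demands exponents $\gamma_l$ with $\sum\gamma_l=\mu+m-1$, which forces most $\gamma_l>\mu$, whereas the naive inductive hypothesis only yields $C^{0,\mu}$ control of the lower $u_p$.

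The gap is in your proposed resolution. Your interpolation claim does not hold: a uniform bound on $u_{p+1}=W_n\cdot\nabla u_p$ controls only the \emph{directional} derivative of $u_p$ along $W_n$, not $\nabla u_p$ in full. Interpolating $|u_p|_{C^{0,\mu}}$ against $|W_n\cdot\nabla u_p|_{L^\infty}$ cannot upgrade $u_p$ to $C^{0,\gamma}$ for $\gamma>\mu$ in general (think of a $u_p$ that is constant along the integral curves of $W_n$ but only $C^{0,\mu}$ in the transverse direction: then $u_{p+1}\equiv 0$ yet $u_p\notin C^{0,\gamma}$). Moreover, even setting this aside, your range $p+1\le k-1$ leaves the case $p=k-1$ uncovered, and that case does occur in the $m=2$ terms of \eqref{eq6.3}.

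The paper's fix is both simpler and avoids interpolation entirely. Since $\phi_0\in C^{k,\mu}$, it is automatically a $C^{k',\gamma}$ initial scalar for \emph{every} $k'<k$ and \emph{every} $\gamma\in(0,1)$ (one extra derivative buys all H\"older exponents at the lower order). Hence the inductive hypothesis, which is the proposition at level $k'$ applied with exponent $\gamma$ rather than $\mu$, already delivers $|(W_n\cdot\nabla)^{k'-1}W_n(\cdot,t)|_{C^{0,\gamma}}\le C(T)$ directly. One then chooses $\gamma$ so that $m\gamma-(m-1)=\mu$ and feeds these bounds into Lemma~\ref{lemma6.1}. In other words, the induction should be understood as running over $k$ with the H\"older exponent free, not over the pair $(k,\mu)$ with $\mu$ fixed.

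One smaller omission: your Gr\"onwall only bounds the seminorm $|u_k|_\mu$. The $L^\infty$ part of $|u_k|_{C^{0,\mu}}$ requires a separate (easier) Gr\"onwall argument, using \eqref{eq5.4} directly together with the $L^\infty$ estimates \eqref{eq3.10} and \eqref{eq6.1}; the paper does this at the end of its proof.
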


\begin{proof}
We argue inductively. The case $k = 1$ is the subject of corollary \ref{cor3.3}. Suppose the claim holds for $k$, we show it holds for $k+1$. Since $\phi_0 \in C^{k+1, \, \mu}$, it follows that $\phi_0 \in C^{j, \, \gamma}$ for all $j \leq k$ and $\gamma \in (0,1)$. Therefore, we can choose $\gamma$ such that $\mu = m \gamma - m + 1$, and the conditions of lemma \ref{lemma6.1} are satisfied. By the inductive hypothesis, there exists a constant such that for all such $j$ and $t \in [-T, T]$:
\begin{equation*}
|(W_n \cdot \nabla)^{j} W_n|_{C^{0, \, \gamma}} \leq C. 
\end{equation*}
We now argue precisely as in proposition \ref{prop3.4} to obtain 
\begin{eqnarray*}
|(W_n\cdot \nabla)^k W_n(x, t) - (W_n\cdot \nabla)^k W_n(y, t)| \leq  |(W_0 \cdot \nabla)^k W_0|_{\mu} \exp \bigg[\mu \int_0^t |\nabla v_n(\cdot, s)|_{L^\infty} ds \bigg] |x - y|^\mu \\
 + \int_0^t |(W_n \cdot \nabla)^{k+1} v_n (\cdot, \tau)|_{\mu} \exp \bigg[\mu \int_\tau^t |\nabla v_n(\cdot, s)|_{L^\infty} ds \bigg]|x-y|^\mu d\tau. 
\end{eqnarray*}
Using proposition \ref{prop6.1}, together with lemma \ref{lemma6.1} for lower order terms and lemma \ref{lemma3.3} for the highest order one, we obtain that for all $t \in [-T,T]$,
\begin{eqnarray*}
|(W_n \cdot \nabla)^{k+1} v_n(\cdot, t)|_\mu & \leq & C_0 |(W_n \cdot \nabla)^k W_n(\cdot, t)|_{\mu} (|\nabla v_n(\cdot, t)|_{L^\infty} + |\omega_0|) + C(T). 
\end{eqnarray*}
Therefore, 
\begin{eqnarray*}
|(W_n \cdot \nabla)^k W_n (\cdot, t)|_{\mu} \leq  |(W_0 \cdot \nabla)^k W_0|_\mu \exp \bigg[\mu \int_0^t |\nabla v_n(\cdot, s)|_{L^\infty} ds \bigg] + \\ 
  + C_0 \int_0^t (|\nabla v_n(\cdot, \tau)|_{L^\infty} + |\omega_0|)|(W_n \cdot \nabla)^{k} W_n (\cdot, \tau)|_{\mu} \exp \bigg[\mu \int_\tau^t |\nabla v_n(\cdot, s)|_{L^\infty} ds \bigg] d\tau +\\
  + C(T) t \exp \bigg[\mu \int_0^t |\nabla v_n(\cdot, s)|_{L^\infty} ds \bigg].
\end{eqnarray*}
Multiplying both sides by $\exp \bigg[- \mu \int_0^t | \nabla v_n(\cdot, s)|_{L^\infty} ds \bigg]$ and denoting the left-hand side by $G(t)$ we obtain:
\begin{equation*}
G(t) \leq G(0) + C(T) t + C_0 \int_0^t (|\nabla v_n(\cdot, \tau)|_{L^\infty} + |\omega_0|) G(\tau) d\tau .
\end{equation*}
Applying, now, Gr\"onwall's lemma, 
\begin{equation*}
|(W_n \cdot \nabla)^k W_n(\cdot, t)|_\mu \leq [|(W_0 \cdot \nabla)^k W_0|_\mu + C(T)t] \exp \bigg[(C_0 + \mu) \int_0^t |\nabla v_n(\cdot, s)|_{L^\infty} ds + C_0 |\omega_0| t \bigg].
\end{equation*}
Since by corollary \ref{cor3.3}, $|\nabla v_n(\cdot, t)|_L^\infty$ are also uniformly bounded on $[-T, T]$, the conclusion follows for the H\"older semi-norms. 

It remains to show that the $L^\infty$ norms are also uniformly controlled.  It follows from \eqref{eq5.4} that
\begin{equation*}
|(W_n \cdot \nabla)^{k} W_n(\cdot, t)|_{L^\infty} \leq |(W_0 \cdot \nabla)^k W_0|_{L^\infty} + \int_0^t |(W_n \cdot \nabla)^{k+1}v_n(\cdot, s)|_{L^\infty} ds.
\end{equation*}
Using now again proposition \ref{prop6.1}, together with the $L^\infty$ estimates of lemmas \ref{lemma6.1} and \ref{lemma3.3}, 
\begin{equation*}
|(W_n \cdot \nabla)^{k+1} v_n(\cdot, s)|_{L^\infty} \leq C|(W_n \cdot \nabla)^{k} W_n(\cdot, s)|_{L^\infty} + C(T),
\end{equation*}
where we also used the fact that the H\"older semi-norms are bounded uniformly on the interval of interest. So, 
\begin{equation*}
|(W_n \cdot \nabla)^{k} W_n(\cdot, t)|_{L^\infty} \leq |(W_0 \cdot \nabla)^k W_0|_{L^\infty}  + C(T) t + C \int_0^t|(W_n \cdot \nabla)^{k} W_n(\cdot, s)|_{L^\infty} ds.
\end{equation*}
Gr\"onwall's lemma concludes the proof. 
\end{proof} 

\begin{corollary} \label{cor6.1}
Assume $\phi_0$ is a $C^{k, \, \mu}$ initial scalar. Let $\phi^n$ be solutions given by proposition \ref{prop2.1}, and $z_n:\mathbb{S}^1 \times \mathbb{R} \rightarrow \mathbb{R}^2$ the associated parametrizations of the boundaries, with $n \in \mathbb{N}$ sufficiently large such that the requirements of proposition \ref{prop6.2} are satisfied. For each $T \in \mathbb{R}_+$, there exists a constant $C(T)$, independent of $n$ or $t$, such that for all $t \in [-T, T]$,
\begin{equation*}
|z_n(\cdot, t)|_{C^{k, \, \mu}(\mathbb{S}^1)} \leq C(T),
\end{equation*}
and
\begin{equation*}
\bigg|\frac{d}{dt} \frac{d^k}{d\alpha^k}z_n(\cdot, t)\bigg|_{L^\infty(\mathbb{S}^1)} \leq C(T). 
\end{equation*}
\end{corollary}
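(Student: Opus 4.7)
The plan is to combine corollaries \ref{cor5.1} and \ref{cor5.2}, which already reduce the two desired bounds to estimates on $(W_n \cdot \nabla)^j W_n$ and $(W_n \cdot \nabla)^j v_n$, with the uniform H\"older estimates of proposition \ref{prop6.2} and the integral representations of proposition \ref{prop6.1}. Since the substantive Gr\"onwall argument has already been carried out in proposition \ref{prop6.2}, the present statement is essentially a packaging step.

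For the first inequality, I would start from corollary \ref{cor5.1}, which reduces matters to bounding $|(W_n \cdot \nabla)^{j} W_n(z_n(\cdot, t), t)|_{C^{0, \, \mu}(\mathbb{S}^1)}$ for $0 \leq j \leq k-1$. The elementary estimate
\begin{equation*}
|g \circ z_n|_{C^{0, \, \mu}(\mathbb{S}^1)} \leq |g|_{L^\infty(\mathbb{R}^2)} + |g|_{\mu} \, \Big| \tfrac{d}{d\alpha} z_n(\cdot, t) \Big|_{L^\infty(\mathbb{S}^1)}^\mu
\end{equation*}
then splits the problem into (a) a uniform Lipschitz bound on $z_n(\cdot, t)$ and (b) a uniform $C^{0, \, \mu}(\mathbb{R}^2)$ bound on $(W_n \cdot \nabla)^j W_n(\cdot, t)$. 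For (a), the $j=1$ case of proposition \ref{prop5.1} gives $\frac{d}{d\alpha} z_n = f_0(\alpha) W_n(z_n(\alpha, t), t)$, so the Lipschitz constant is controlled by $|f_0|_{L^\infty} |W_n|_{L^\infty}$, which is uniformly bounded on $[-T, T]$ by corollary \ref{cor3.3}. For (b), since $\phi_0$ is a $C^{k, \, \mu}$ initial scalar it is also a $C^{j+1, \, \mu}$ initial scalar whenever $j+1 \leq k$, so proposition \ref{prop6.2} applied with $k$ replaced by $j+1$ produces exactly the required uniform bound.

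For the second inequality, corollary \ref{cor5.2} reduces the task to controlling $|(W_n \cdot \nabla)^j v_n(\cdot, t)|_{L^\infty}$ for $1 \leq j \leq k$. Proposition \ref{prop6.1} writes each such quantity as a linear combination of integrals of the form \eqref{eq6.3} with $\sum_l j_l = j$. For the terms with $m \geq 2$, I would choose H\"older exponents $\gamma_1, \ldots, \gamma_m \in (0,1)$ with $\sum \gamma_l > m - 1$ and apply estimate \eqref{eq6.1} of lemma \ref{lemma6.1}; the resulting bound is a product of $|(W_n \cdot \nabla)^{j_l - 1} W_n|_{C^{0, \, \gamma_l}}$ factors times $|\omega_0|$, and each such factor is uniformly bounded on $[-T, T]$ by proposition \ref{prop6.2} (noting that $j_l - 1 \leq j - m \leq k - 2$ for $m \geq 2$, and that $\phi_0$ being a $C^{k, \, \mu}$ initial scalar implies it is also a $C^{j_l, \, \gamma_l}$ initial scalar). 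For the single-difference case $m = 1$, the kernel $\partial_{i_1} K$ is a degree $-2$ Calder\'on-Zygmund kernel with mean zero on spheres, so estimate \eqref{eq3.10} of lemma \ref{lemma3.3} (applied with $f = (W_n \cdot \nabla)^{j-1} W_n$ and, say, $R = 1$) yields a bound in terms of $|f|_{L^\infty}$, $|f|_\mu$, and $|\omega_0|$, each uniformly controlled on $[-T, T]$ by proposition \ref{prop6.2}. Summing over the finitely many terms concludes both bounds.

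The only mild subtlety worth flagging is the separate treatment of $m = 1$ versus $m \geq 2$, since lemma \ref{lemma6.1} explicitly requires $m \geq 2$ and one must fall back on lemma \ref{lemma3.3} for the single-difference terms; all other estimates reduce to direct appeals to propositions \ref{prop6.1} and \ref{prop6.2}, the latter of which carries the real analytic weight.
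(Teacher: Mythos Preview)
Your proposal is correct and follows essentially the same route as the paper's own proof, which simply cites corollary \ref{cor5.1} together with proposition \ref{prop6.2} for the first bound, and corollary \ref{cor5.2} together with propositions \ref{prop6.1}, \ref{prop6.2} and lemmas \ref{lemma6.1}, \ref{lemma3.3} for the second. You have in fact been more careful than the paper in two places: you make explicit the composition estimate needed to pass from $C^{0,\mu}(\mathbb{R}^2)$ bounds on $(W_n\cdot\nabla)^j W_n$ to $C^{0,\mu}(\mathbb{S}^1)$ bounds on $(W_n\cdot\nabla)^j W_n \circ z_n$ (via the uniform Lipschitz bound on $z_n$), and you spell out the $m=1$ versus $m\geq 2$ dichotomy in applying lemmas \ref{lemma3.3} and \ref{lemma6.1}.
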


\begin{proof}
The first claim follows from proposition \ref{prop6.2} and corollary \ref{cor5.1}. The second, from propositions \ref{prop6.1} and \ref{prop6.2}, lemmas \ref{lemma6.1} and \ref{lemma3.3}, and corollary \ref{cor5.2}. 
\end{proof}

\section{\texorpdfstring{$C^{k, \, \mu}$}{C(k,mu)} Regularity for the Boundary of the Vortex Patch}

In this section, we use the uniform bounds together with the Arzel\`a-Ascoli theorem to show that there exists a subsequence of curves $\{z_n\}$ which converges to the boundary $z$ of the the patch, thus implying its regularity. 

\begin{proposition} \label{prop7.1}
Let $\phi:\mathbb{R}^2 \times \mathbb{R} \rightarrow \mathbb{R}$ be a solution of the vortex patch equations with $C^{k, \, \mu}$ initial scalar $\phi_0$, as given by theorem \ref{thm1}; and let $z:\mathbb{S}^1 \times \mathbb{R} \rightarrow \mathbb{R}^2 $ be a parametrization of the boundary as in \ref{eq5}.  Then, for each time $t \in \mathbb{R}$, $z(\cdot, t) \in C^{k, \, \mu}(\mathbb{S}^1)$. 
\end{proposition}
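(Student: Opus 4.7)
The plan is to leverage the uniform bounds of Corollary \ref{cor6.1} together with a compactness argument to extract, for each time $t$, a limit in $C^{k, \, \mu}$ that must coincide with $z(\cdot, t)$.

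First, fix $T \in \mathbb{R}_+$ and restrict to $t \in [-T, T]$. By Corollary \ref{cor6.1}, the sequence $\{z_n(\cdot, t)\}_{n \geq M}$ is bounded in $C^{k, \, \mu}(\mathbb{S}^1)$ uniformly in $t \in [-T,T]$, and the time derivatives $\frac{d}{dt}\frac{d^k}{d\alpha^k} z_n(\cdot,t)$ are uniformly bounded in $L^\infty(\mathbb{S}^1)$. The uniform $C^{k, \, \mu}$ bound yields equicontinuity in $\alpha$ of each derivative $\frac{d^j}{d\alpha^j} z_n$ for $0 \leq j \leq k$, while the time-derivative bound gives equicontinuity in $t$ of the top-order spatial derivative; equicontinuity in $t$ for the lower-order spatial derivatives follows from the identity $\frac{d}{dt} z_n(\alpha, t) = v_n(z_n(\alpha, t), t)$, the uniform $L^\infty$ bound on $v_n$ given by Proposition \ref{prop2.2}, and the spatial bounds above.

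Next, by the Arzel\`a--Ascoli theorem applied successively to $\frac{d^j}{d\alpha^j} z_n$ for $j = 0, \ldots, k$, together with a diagonalization over an exhaustion of $\mathbb{S}^1 \times \mathbb{R}$ by compact sets $\mathbb{S}^1 \times [-T_\ell, T_\ell]$, we pass to a further subsequence (still denoted $z_n$) along which each $\frac{d^j}{d\alpha^j} z_n$ converges uniformly on compact sets to a continuous function $g_j$. Integrating in $\alpha$ shows that $g_j = \frac{d^j}{d\alpha^j} g_0$, so the limit $\tilde z := g_0$ is $C^k$ in $\alpha$.

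To identify $\tilde z$ with $z$, we invoke the remark following Proposition \ref{prop4.1}: the particle-trajectory mappings $X_n$ converge (along a further subsequence, which we absorb into the present one) uniformly on compact sets to $X$. Since $z_n(\alpha, t) = X_n(z_0(\alpha), t)$ and $z(\alpha, t) = X(z_0(\alpha), t)$, we conclude that $z_n \to z$ uniformly on compact subsets of $\mathbb{S}^1 \times \mathbb{R}$, and therefore $\tilde z = z$. Finally, the H\"older estimate passes to pointwise limits: for any $\alpha \neq \beta$ in $\mathbb{S}^1$ and $t \in [-T, T]$,
\begin{equation*}
\Big| \tfrac{d^k}{d\alpha^k} z(\alpha, t) - \tfrac{d^k}{d\alpha^k} z(\beta, t)\Big| = \lim_{n \to \infty} \Big| \tfrac{d^k}{d\alpha^k} z_n(\alpha, t) - \tfrac{d^k}{d\alpha^k} z_n(\beta, t)\Big| \leq C(T)\, |\alpha - \beta|^\mu,
\end{equation*}
so $z(\cdot, t) \in C^{k, \, \mu}(\mathbb{S}^1)$; since $T$ was arbitrary, the conclusion holds for every $t \in \mathbb{R}$.

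The main obstacle is essentially the bookkeeping: threading together three distinct subsequence extractions (of $X_n$ to $X$, of the spatial derivatives of $z_n$ via Arzel\`a--Ascoli, and ensuring the time-equicontinuity needed to apply compactness uniformly in $t$) along a single subsequence, and verifying that the resulting $C^k$ limit does parametrize the boundary of the patch furnished by Theorem \ref{thm1} rather than some spurious curve. Once this identification is made, the regularity claim follows from the standard lower-semicontinuity of the H\"older seminorm.
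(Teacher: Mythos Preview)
Your argument is correct and matches the paper's: uniform bounds from Corollary \ref{cor6.1}, Arzel\`a--Ascoli plus diagonalization on the spatial derivatives of $z_n$, identification of the limit with $z$ via $X_n \to X$, and passage of the H\"older seminorm to the pointwise limit. The only difference is organizational---the paper proceeds by induction on the order of the derivative rather than treating all $j$ at once---and there is one small imprecision: your justification of time-equicontinuity for the intermediate derivatives $1 \le j \le k-1$ via $\frac{d}{dt}z_n = v_n(z_n,t)$ and Proposition \ref{prop2.2} only directly handles $j=0$; the clean fix is to note that the proof of Corollary \ref{cor5.2} (and hence the second bound in Corollary \ref{cor6.1}) applies verbatim to every $1 \le j \le k$, giving $\big|\frac{d}{dt}\frac{d^j}{d\alpha^j} z_n\big|_{L^\infty(\mathbb{S}^1)} \le C(T)$ for all such $j$.
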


\begin{proof}
The argument is inductive. We first consider $k=1$. Let $z_n(\alpha, t)$ be the subsequence of parametrizations of the approximate patches corresponding to the subsequence of $\phi^n$ given by proposition \ref{prop4.1}. Then, since $z_n(\alpha, t) = X_n(z_0(\alpha), t)$, $z(\alpha, t) = X(z_0(\alpha), t)$, and $X_n$ converges uniformly on compact sets of $\mathbb{R}^2 \times \mathbb{R}$ to $X$ (see remark after proposition \ref{prop4.1}), we have that $z_n$ converges uniformly to $z$ on each set $\mathbb{S}^1 \times [-T, T]$, $T \in \mathbb{R}_+$. By corollary \ref{cor6.1}, $\frac{d}{d\alpha} z_n(\cdot, t)$ are uniformly H\"older in the $\alpha$ variable and uniformly bounded. Also, for $t_1, t_2 \in [-T, T]$, by the mean-value theorem, there exists $\tau \in [-T,T]$ such that
\begin{equation*}
\bigg|\frac{dz_n}{d\alpha}(\alpha, t_1) -  \frac{dz_n}{d\alpha}(\alpha, t_2)\bigg| \leq \bigg| \frac{d}{dt} \frac{d}{d\alpha} z_n(\cdot, \tau)  \bigg|_{L^{\infty}(\mathbb{S}^1)} |t_1 - t_2|,
\end{equation*} 
which, shows, by corollary \ref{cor6.1}, that $\frac{d}{d\alpha} z_n$ are also equicontinuous in the time variables. Therefore, by the Arzel\`a-Ascoli theorem and a diagonalization argument, there exists a subsequence (for which we keep the notation) that converges uniformly on all compact sets $\mathbb{S}^1 \times [-T, T]$, $T \in \mathbb{R}_+$ to some mapping $\zeta_1(\alpha, t)$. Note that by construction $\zeta_1(
\cdot, t) \in C^{0, \, \mu}(\mathbb{S}^1)$, for each $t \in \mathbb{R}$. We show that $\frac{d}{d\alpha}z = \zeta_1$. Indeed, it suffices to show that 
\begin{equation*}
z(\alpha_2, t) - z(\alpha_1, t) = \int_{\alpha_1}^{\alpha_2} \zeta_1(s, t) ds. 
\end{equation*}
But we have 
\begin{equation*}
z_n(\alpha_2, t) - z_n(\alpha_1, t) = \int_{\alpha_1}^{\alpha_2} \frac{d}{d\alpha} z_n(s, t) ds.
\end{equation*}
The left-hand side converges to $z(\alpha_2, t) - z(\alpha_1, t)$ as $n \rightarrow \infty$, and the right-hand side to $\int_{\alpha_1}^{\alpha_2} \zeta_1(s, t) ds$ by the uniform convergence of $\frac{d}{d\alpha} z_n$ to $\zeta_1$ on compact sets. This proves the case $k = 1$. 

Suppose that for $k$, we have a subsequence of parametrized curves $z_n$ such that for each $j \leq k$, $\frac{d^j}{d\alpha^j}z_n$ converge uniformly to $\frac{d^j}{d\alpha^j} z$ on compact sets. The argument that follows is exactly the one above: corollary \ref{cor6.1} implies that $\frac{d^{k+1}}{d\alpha^{k+1}}z_n$ converges uniformly on compact sets to some $\zeta_{k+1}$. Then, also $\zeta_{k+1}(\cdot, t) \in C^{0, \mu}$ for all times $t \in \mathbb{R}$. Finally, we check as above that $\zeta_{k+1} = \frac{d^{k+1}}{d\alpha^{k+1}}z$. This proves the inductive hypothesis and concludes the proof.
\end{proof}

Theorem \ref{thm1} (together with the discussion following its proof in section 3) and proposition \ref{prop7.1} imply theorem \ref{thm2}. 

\subsection*{Acknowledgements} This paper was written to satisfy the undergraduate junior independent work requirement. I am very grateful to Professor Peter Constantin for agreeing to direct this paper, for proposing the problem to me and suggesting that I could start by regularizing the Heaviside function in \eqref{eq1.3}, as well as for his guidance and encouragement throughout. 

\begin{bibdiv}
\begin{biblist}

\bib{BK}{article}
{
author={Bae, H.},
author={Kelliher, J. P.},
title={The vortex patches of Serfati},
journal={ArXiv e-print},
date={2014},
note={https://arxiv.org/abs/1409.5169},
}

\bib{B}{book}
{
title={Existence, Uniqueness, and a Characterization of Solutions to the Contour Dynamics Equation},
author={Bertozzi, A.},
date={1991},
publisher={Phd Thesis, Princeton University}
}

\bib{BC}{article}
{
author={Bertozzi, A.},
author={Constantin, P.},
title={Global regularity for vortex patches},
journal={Commun. Math. Phys.},
volume={152},
date={1993},
number={1},
pages={19--28},
}

\bib{Ch}{article}
{
author={Chemin, J-Y.},
title={Persistence de structures g\'eom\'etriques dans les fluides incompressibles bidimenionnels},
journal={Ann. Sci. Ecole Norm. Sup.},
volume={26},
date={1993},
number={4},
pages={517--542},
}

\bib{CS}{article}
{
author={Coutand, D.},
author={Shkoller, S.},
title={Regularity of the velocity field for Euler vortex patch evolution},
journal={Trans. Amer. Math. Soc.},
volume={370},
date={2018},
pages={3689--3720},
}

\bib{M}{article}
{
author={Majda, A.},
title={Vorticity and the mathematical theory of incompressible fluid flow},
journal={Commun. Pure Appl. Math.},
volume={39},
date={1986},
number={1 S},
pages={S187--S220},
}

\bib{MB}{book}
{
title={Vorticity and Incompressible Flow},
author={Majda, A.},
author={Bertozzi, A.},
date={2001},
publisher={Cambridge University Press}
}

\bib{S}{article}
{
author={Serfati, P.},
title={Une preuve directe d’existence globale des vortex patches 2D},
journal={C. R. Acad. Sci. Paris S\'er. I Math.},
volume={318},
date={1994},
number={6},
pages={515--518},
}

\bib{Y}{article}
{
author={Yudovich, V. I.},
title={Non-stationary flow of an ideal incompressible liquid},
journal={U.S.S.R. Comput. Math. Math. Phys.},
volume={3},
date={1963},
number={6},
pages={1407--1456}
}

\bib{ZHR}{article}
{
author={Zabusky, N.},
author={Hughes, M. H.},
author={Roberts, K. V.},
title={Contour dynamics for the Euler equations in two dimensions},
journal={J. Comp. Phys.},
volume={30},
date={1979},
number={1},
pages={96--106}
}

\end{biblist}
\end{bibdiv}

\end{document}